\newtheorem{Theorem}{Theorem}[section]
\newtheorem{Proposition}[Theorem]{Proposition}
\newtheorem {Cor}[Theorem]{Corollary}
\newtheorem {pro}[Theorem]{Proposition}
\newtheorem {Lemma}[Theorem]{Lemma}
\newtheorem {rem}[Theorem]{Remark}
\newtheorem {rems}[Theorem]{Remarks}
\newtheorem {com}[Theorem]{Comment}
\newtheorem {coms}[Theorem]{Comments}
\newtheorem {warning}[Theorem]{Warning}
\newtheorem {notation}[Theorem]{Notation}
\newtheorem {Definition}[Theorem]{Definition}
\newtheorem {exer}[Theorem]{Exercise}
\newtheorem {Assumption}[Theorem]{Assumption}
\newcommand{\bnota}{\begin{notation} \rm } \newcommand{\enota}{\end{notation}}
\newcommand{\bw}{\begin{warning} \rm } \newcommand{\ew}{\end{warning}}
\newcommand{\bcom}{\begin{com} \rm } \newcommand{\ecom}{\end{com}}
\newcommand{\bcoms}{\begin{coms} \rm } \newcommand{\ecoms}{\end{coms}}
\newcommand {\bdefi}{\begin{Definition}}
\newcommand {\edefi}{\end{Definition}}
\newcommand {\bl}{\begin{Lemma}}
\newcommand {\el}{\end{Lemma}}
\newcommand {\bethe}{\begin{Theorem}}
\newcommand {\eethe}{\end{Theorem}}
\newcommand {\bp}{\begin{pro}}
\newcommand {\ep}{\end{pro}}
\newcommand {\bcor}{\begin{Cor}}
\newcommand {\ecor}{\end{Cor}}
 \newcommand {\brem }{\begin{rem} \rm }
\newcommand {\erem }{\end{rem}}
 \newcommand {\brems }{\begin{rems} \rm }
\newcommand {\erems }{\end{rems}}
\newcommand {\bexo}{\begin{exer} \rm }
\newcommand {\eexo}{\hfill $\lhd$ \end{exer}}
\renewcommand{\tilde}{\widetilde}
\let\ssection=\section
\renewcommand{\section}{\setcounter{equation}{0}\ssection}
\renewcommand{\footnoterule}{%
  \kern 2 pt
  \hrule width \textwidth width 2in
  \kern 2pt
}
\newcommand{\be}{\begin{equation}}
\newcommand{\ee}{\end{equation}}
\newcommand{\bde}{\begin{displaymath}}
\newcommand{\ede}{\end{displaymath}}
\newcommand{\beq}{\begin{eqnarray*}}
\newcommand{\eeq}{\end{eqnarray*}}
\newcommand{\beqa}{\begin{eqnarray}}
\newcommand{\eeqa}{\end{eqnarray}}
\newcommand{\bel }{\left\{\begin{array}{ll}}
\newcommand{\eel}{\cr \end{array} \right.}
\DeclareMathOperator*{\esssup}{ess\,sup}
\title{American Options with Last Exit Times: A Free-Boundary Approach } 
\author{Zhuoshu Wu\thanks{Corresponding author. Email address: zhuoshuwu@hotmail.com.}\thanks{The authors are greatly indebted to Professor Ben Goldys for very helpful discussions.}}
\author{Libo Li}
\affil{The University of New South Wales, Sydney}
\begin{document}
\maketitle

\begin{abstract}
We study the valuation of an American put option with a random time horizon given by the last exit time of the underlying asset from a fixed level. Since this random time is not a stopping time, the problem falls outside the classical optimal stopping framework. Using enlargement of filtrations and the associated Az\'{e}ma supermartingale, we transform the problem into an equivalent optimal stopping problem with a semi-continuous, time-dependent gain function whose partial derivatives exhibit singular behaviour. The resulting formulation introduces significant analytical challenges, including the loss of smoothness of the optimal stopping boundary. We develop new arguments to characterise the continuation and stopping regions, establishing monotonicity of the free boundary under suitable conditions, and analyse the regularity of the value function. In particular, we derive nonlinear integral equations that uniquely characterise both the free-boundary and the value function. Our results extend the classical theory of American options to a class of problems with random horizons and provide a framework for incorporating default-type features modelled by last exit times.\\

\textbf{Key Words:} Option pricing, last exit time, optimal stopping, free-boundary problem
\end{abstract}

\section{Introduction}

American options are a cornerstone of mathematical finance and have been extensively studied through the theory of optimal stopping and free-boundary problems. In the classical formulation, the holder may exercise the option at any stopping time up to a fixed maturity time, leading to a well-understood characterisation of the value function and the optimal stopping boundary.

In many practical situations, however, the effective time horizon of a contract may itself be random. This feature naturally arises in the presence of default risk or early termination provisions, particularly in over-the-counter markets (see \cite[Page 300]{HullandWhite}). In this paper, we consider an American option whose exercise is restricted by a random horizon given by the \textit{last exit time} of the underlying asset from a prescribed level. More precisely, the option becomes worthless once the underlying process has made its final visit above a fixed level, thereby introducing a path-dependent and non-Markovian feature into the valuation problem. 

In this work, we consider a complete filtered probability space $(\Omega, \mathcal{F}, (\mathcal{F}_t)_{t \geq 0}, \mathbb{P})$ satisfying the usual conditions, where the filtration $(\mathcal{F}_t)_{t\geq0}$ is generated by a standard Brownian motion $W=(W_t)_{t\in[0, T]}$. We consider a complete market consisting of only one stock and one bond whose processes follow the stochastic differential equations:
\begin{align*}
dX_t&=r X_t dt + \sigma X_t dW_t, \qquad X_0=x>0,\\
dB_t&=r B_t dt, \qquad\qquad\qquad\,\,\,\,\,\, B_0=1,
\end{align*}
where $r>0$ is the interest rate and $\sigma>0$ is the volatility coefficient. This is a risk-neutral model since the discounted stock price is a martingale under measure $\mathbb{P}$; in other words, $\mathbb{P}$ is the risk-neutral measure. 

Within this framework, we study an American put option with strike price $K>0$, whose exercise is constrained by a random horizon. Specifically, the holder may exercise at any time prior to the earlier of a fixed maturity $T$ (see \cite{LW2024} for the infinite horizon case) and the last exit time
\begin{align*}
\theta&=\sup\{t\in[0, T]: X_t \geq L\},
\end{align*}
where $L>0$ is a prescribed level and we assume $K>L$. The corresponding valuation problem is given by
\begin{align}
V&=\esssup_{\tau\in[0, T]} \mathbb{E} \Big[ e^{-r\tau} \left( K - X_\tau \right)^+ I\{\tau<\theta\} \Big], \label{VCIC1}
\end{align}
where the expectation is taken under the probability measure $P$ and the supremum is taken over all the stopping times with respect to $\{\mathcal{F}_t\}_{t\in[0, T]}$. This contract can be interpreted as a defaultable American contingent claim, where the default time is modelled by the last exit time (see \cite[Pages 188 and 190]{ElliottRJandJeanblancMandYorM2000}). Under the assumption of market completeness, such claims remain hedgeable despite the presence of default risk.

The main difficulty in pricing such options stems from the fact that the last exit time is not a stopping time with respect to the natural filtration of the underlying process (see \cite[Chapter 8.2]{Nikeghbali}). As a result, the most interesting feature of the last exit time is its involvement of ``the knowledge of the future", which places the problem outside the scope of classical optimal stopping theory. 

This feature also creates a strong connection with the \textit{optimal prediction} problem, see \cite{BoyceWM1970, GilbertJPandMostellerF2006, GriffeathDandSnellJL1974, KarlinS1962}, where this problem is also known as the stopping rule problem, the secretary problem and the optimal selection problem, and for the more recent works, see \cite{DuToitandPeskir2009, DuToitandPeskirandShiryaev2008}. For related financial applications, we refer to the monographs \cite{BiginiandOksendal2005, ElliottRJandJeanblancMandYorM2000, FontanaandJeablancandSong}.

To overcome this challenge, we employ the theory of \textit{enlargements of filtrations} studied by Mansuy and Yor \cite{MansuyandYor} and Nikeghbali \cite{Nikeghbali}. Using the associated Az\'{e}ma supermartingale, the original problem can be transformed into an equivalent optimal stopping problem without explicit reference to the last exit time. The resulting gain function in the finite-time formulation is semi-continuous and time-dependent with partial derivatives being singular at certain points. These features prevent a direct application of standard free-boundary techniques developed in \cite{PeskirandShiryaev}.

Our work is most closely related to the methodology developed by Peskir and Shiryaev \cite{PeskirandShiryaev}, which provides a systematic framework for a wide range of optimal stopping problems associated with the American contingent claims. While their framework covers a broad range of Markovian and time-dependent reward structures, the presence of a last exit time introduces additional structural features that fall outside the classical setting. In particular, the combination of a non-stopping time and the induced irregularity of the gain function leads to a non-standard optimal stopping problem that requires further analysis. Other relevant references include \cite{DuToitandPeskirandShiryaev2008, DuToitandPeskir2009, GloverandPeskirandSamee2011}. In particular, the time-dependent reward problems have been  studied by DuToit, Peskir and Shiryaev \cite{DuToitandPeskirandShiryaev2008, DuToitandPeskir2009}, Glover, Peskir and Samee \cite{GloverandPeskirandSamee2011}. A different approach for verifying the regularity of the optimal stopping boundary can be found in \cite{Deangelis2015}, while the breakdown of the smooth-fit condition has been studied in \cite{Qiu2016, DetempleandKitapbayev2018}. 

The main contributions of this paper are as follows. 
\begin{enumerate}
\item First, we provide a rigorous reformulation of the pricing problem as an optimal stopping problem via the associated Az\'{e}ma supermartingale, thereby eliminating the explicit dependence on a non-stopping random horizon.

\item Second, we develop new analytical arguments to characterise the continuation and stopping regions in the presence of a time-dependent, gain function with singular derivatives, a setting where classical smooth-fit and regularity arguments fail.

\item Third, we establish monotonicity and continuity properties of the optimal stopping boundary under suitable conditions, overcoming the lack of smoothness of the gain function. 

\item Finally, we derive a system of nonlinear integral equations that uniquely characterise both the value function and the free boundary, extending the classical early-exercise premium representation to this non-standard framework.
\end{enumerate}

The remainder of this paper is organised as follows. By making use of the Az\'{e}ma supermartingale associated with $\theta$, we first reformulate problem \eqref{VCIC1} into a standard optimal stopping problem \eqref{OSPFL} and provide some useful properties of the Az\'{e}ma supermartingale in Section \ref{survival}. We then introduce the corresponding free boundary problem associated with \eqref{OSPFL} in Section \ref{TFBPC5} and study the structure of the continuation and stopping sets in Section \ref{TCASSC5}. As mentioned before, due to the singularity of the partial derivative of the gain function, it is difficult to verify the regularity of the value function and the optimal stopping boundary. As a result of this, we only derive semi-continuity of the value function in Lemma \ref{VILSCC5} which is enough to confirm the existence of the optimal stopping rule in Lemma \ref{tauDisoptimal}. We then establish the monotonicity of the boundary by comparing the level $L$ with the optimal stopping boundary of the {\it standard American put option} with the same strike price $K$, which paves the way for us to address the problems caused by the singularity and obtain the regularity of the value function and the boundary. In fact, depending on the relative position of $L$, we may require additional parameter assumptions, see Assumption \ref{TMOHTC5}, to ensure the monotonicity of the optimal stopping boundary. Finally, in Section \ref{TOSPRC5} we present our main result in Theorem \ref{MTLGB} where the \textit{early exercise premium} representation of the value function and a non-linear integral equation satisfied by the exercise boundary are obtained.

\section{The Optimal Stopping Problem}\label{stopping}

In this section, we reformulate the main optimal stopping problem \eqref{VCIC1} and collect some elementary facts that will be used later. We begin by an exploitation of the smoothing lemma in \eqref{VCIC1} and obtain
\begin{align}
V &=\sup_{0\leq\tau\leq{T}} \mathbb{E} \left[ e^{-r\tau} \left( K-X_{\tau} \right)^+ \mathbb{P}(\theta>\tau | \mathcal{F}_{\tau})  \right]. \label{OSPFL}
\end{align}
In the view of \eqref{OSPFL}, the first thing is to calculate $P( \theta > \tau | \mathcal{F}_{\tau})$ and let us introduce the notation\footnote{Not to confuse this $x$ with $X_0=x$.} 
\begin{align*}
Z(t, x)= \left( \Phi \left( \frac{ - \log{\frac{L}{x}} + \left( r-\frac{\sigma^2}{2} \right)(T-t) }{\sigma\sqrt{T-t}} \right) + \left(\frac{L}{x}\right)^\alpha \Phi \left( \frac{-\log{\frac{L}{x}} - \left( r-\frac{\sigma^2}{2} \right)(T-t) }{\sigma\sqrt{T-t}} \right) \right)\wedge 1,
\end{align*}
where $\Phi(x)=\frac{1}{\sqrt{2\pi}} \int_{-\infty}^x e^{-\frac{z^2}{2}} dz$ and $\alpha=\frac{2r}{\sigma^2}-1$.

\subsection{The Az\'{e}ma Supermartingale}\label{survival}

\begin{Proposition} \label{PSPML}
Let $\mathbb{P}\left(\theta>t  | \mathcal{F}_{t} \right)$ be the Az\'{e}ma supermartingale associated with the random time $\theta$. Then, for $r-\frac{\sigma^2}{2}\in\mathbb{R}$, $\sigma>0$ and $t\in[0, T)$,
\begin{align*}
\mathbb{P}\left(\theta>t  | \mathcal{F}_{t}\right)&= Z(t, X_t).
\end{align*} 
In addition, $Z(T, X_T)=1$ for $\log\frac{L}{X_T}\leq 0$ and $0$ otherwise.
\end{Proposition}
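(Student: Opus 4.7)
The strategy parallels the infinite-horizon computation, but now one needs the distribution of the maximum of a drifted Brownian motion over the \emph{finite} interval $[0,T-t]$ rather than over $[0,\infty)$. First I would rewrite
\[
\{\theta > t\} = \Big\{\max_{u \in [t,T]} X_u \geq L\Big\},
\]
and then, using the Markov property together with the explicit representation $X_{t+u} = X_t \exp\!\big((r-\sigma^2/2)u + \sigma(W_{t+u}-W_t)\big)$, reduce the conditional probability to
\[
P(\theta > t \,|\, \mathcal{F}_t) = \varphi(t, X_t), \qquad \varphi(t,y) := P\!\Big(\max_{0 \le u \le T-t}\big(\mu u + \sigma W_u\big) \ge \log(L/y)\Big),
\]
where $\mu := r-\sigma^2/2$. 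This reduces the claim to a standard question about the running maximum of a Brownian motion with drift reaching a fixed level over a finite horizon.

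I would then split into two cases. If $y \ge L$, the threshold $\log(L/y)$ is nonpositive, hence the event occurs almost surely and $\varphi(t,y)=1$; this is precisely what the $\wedge\,1$ in the definition of $Z$ delivers. If $y < L$, the threshold is strictly positive and the classical formula for the distribution of the maximum of a drifted Brownian motion (see the same reference \cite[Page 759--760]{Shiryaev} invoked in the infinite-horizon proposition) gives, for $a>0$,
\[
P\!\Big(\max_{0 \le u \le s}(\mu u + \sigma W_u) \ge a\Big) = \Phi\!\left(\frac{-a+\mu s}{\sigma\sqrt{s}}\right) + e^{2\mu a/\sigma^2}\,\Phi\!\left(\frac{-a-\mu s}{\sigma\sqrt{s}}\right).
\]
Substituting $a = \log(L/y)$, $s = T-t$, and noting that $\alpha = 2r/\sigma^2 - 1 = 2\mu/\sigma^2$, so that $e^{2\mu a/\sigma^2} = (L/y)^{\alpha}$, I recover exactly the unclamped expression in the definition of $Z(t,y)$.

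Combining the two cases yields $P(\theta > t \,|\, \mathcal{F}_t) = Z(t, X_t)$, the minimum with $1$ simply absorbing the $\{X_t \ge L\}$ branch. There is no real obstacle here: the only mild points of care are the algebraic identification $2\mu/\sigma^2 = \alpha$ and the use of path continuity of $X$ to conclude that on $\{X_t \ge L\}$ one has $\theta > t$ almost surely, not merely $\theta \ge t$. A more conceptual alternative is to verify, via the local time--space formula applied to $Z(t, X_t)$, that the candidate is the supermartingale arising from the Doob--Meyer decomposition of the Azéma supermartingale; this perspective will be needed later when computing explicit compensators, but the direct probabilistic route above is the most efficient way to reach the stated identity.
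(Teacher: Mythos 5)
Your proposal is correct and follows essentially the same route as the paper: both reduce $\{\theta>t\}$ to the event that the running maximum of the drifted Brownian motion $(r-\sigma^2/2)u+\sigma W_u$ over $[0,T-t]$ exceeds $\log(L/X_t)$, invoke the classical finite-horizon maximum distribution from \cite[Page 759]{Shiryaev}, and absorb the $X_t\geq L$ case into the $\wedge\,1$. The only cosmetic difference is that the paper passes through the first hitting time $d_t=\inf\{u\geq t: X_u\geq L\}$ as an intermediate step before arriving at the same running-maximum event.
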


\begin{proof}
First of all, suppose that
\begin{align*}
{d}_t &= \inf \{ u\geq t: X_u \geq L \} = t + \inf \{ u \geq 0: \bar{X}_0= X_t, \bar{X}_u  \geq L \},
\end{align*}
where the second equality follows from the Markov property of $X$ and for easy reference, let
\begin{align*}
\bar{h}_L&=\inf \{ u\geq 0 :  \bar{X}_0= X_t,  \bar{X}_{u} \geq L  \}, 
\end{align*}
in plain language, $\bar{h}_L$ is the first time when a geometric Brownian motion $\bar{X}$, starting from $X_t$, hits the level $L$. Then, it follows that
\begin{align*}
\mathbb{P}(\theta>t | \mathcal{F}_{t}) &= 1 - \mathbb{P}\left(\theta \leq t |  \mathcal{F}_{t}\right)\\
&= 1- \mathbb{P}\left(d_t > T | \mathcal{F}_{t}\right)\\
&= 1- \mathbb{P}\left(\bar{h}_L> T-t | \mathcal{F}_{t}\right).
\end{align*}
Upon observing that, in fact,
\begin{align}
\{ \bar{h}_L \leq T-t\} &= \bigg\{ \max_{0\leq u \leq T-t} \bar{X}_{u} \geq L\ \bigg\}=\bigg\{ \max_{0\leq u \leq T-t} X_t e^{\left( r-\frac{\sigma^2}{2} \right) u + \sigma W_u } \geq L \bigg\},\label{seteqc5}
\end{align}
after which, an application of Lemma 1 in \cite[Pages 759 and 760]{Shiryaev1999}  proves that, for $\log{\frac{L}{X_t}} \geq 0$, $r-\frac{\sigma^2}{2} \in \mathbb{R}$ and $\sigma>0$, 
\begin{align*}
\mathbb{P}\left( \theta>t | \mathcal{F}_t \right)  &= \Phi \left( \frac{ - \log{\frac{L}{X_t}} + \left( r-\frac{\sigma^2}{2} \right)(T-t) }{\sigma\sqrt{T-t}} \right) \\
& \qquad \qquad \qquad + \left(\frac{L}{X_t}\right)^\alpha \Phi \left( \frac{-\log{\frac{L}{X_t}} - \left( r-\frac{\sigma^2}{2} \right)(T-t) }{\sigma\sqrt{T-t}} \right).
\end{align*}
while for $\log{\frac{L}{X_t}} < 0$, observe that event \eqref{seteqc5} happens almost surely, namely, 
\begin{align}
\mathbb{P}\left( \theta>t | \mathcal{F}_t \right) = 1. \label{XtgeqL}
\end{align}

As for $t=T$, from the set equality \eqref{seteqc5}, it follows that
\begin{align*} 
\mathbb{P}(\theta>T |\mathcal{F}_T)=
\begin{cases}
1,& \log{\frac{L}{X_T}}\leq 0,\\
0,& \log{\frac{L}{X_T}}>0.
\end{cases}
\end{align*}
\end{proof}

\begin{Assumption}
Hereafter, it is always assumed that $\alpha<0$.
\end{Assumption}

\begin{Proposition} \label{PDEOZCL}
The function $Z(t, x)$ satisfies the following partial differential equation, for $(t, x)\in[0, T]\times\{ (0, L)\cup(L, \infty) \}$,
\begin{align}
\frac{\partial }{\partial t} Z(t, x) + rx \frac{\partial}{\partial x} Z(t, x) + \frac{1}{2} \sigma^2 x^2 \frac{\partial^2}{\partial x^2} Z(t, x)=0. \label{PDEOZL}
\end{align}
\end{Proposition}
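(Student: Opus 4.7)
The plan is to mirror, in the finite-time setting, the Doob-Meyer argument that the authors used for equation (2.3), exploiting (as their subsequent remark foreshadows) the fact that the probabilistic route handles the kink of $Z(t,\cdot)$ at $x=L$ much more gracefully than direct differentiation. Concretely, I will write two semimartingale decompositions of $Z(t,X_t)$ — one coming from its identification as the Az\'ema supermartingale of $\theta$, and one from the local time-space formula applied to the space-piecewise $C^{1,2}$ map $Z$ — and read off the PDE by matching their finite-variation parts.

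By Proposition \ref{PSPML} we have $Z(t,X_t) = P(\theta > t \mid \mathcal{F}_t)$, whose Doob-Meyer decomposition reads
\begin{align*}
Z(t,X_t) = Z(0,X_0) + M^\theta_t - A^\theta_t,
\end{align*}
with $M^\theta$ a uniformly integrable martingale and $A^\theta$ a continuous, predictable, increasing process whose differential is carried by the set $\{s : X_s = L\}$ (the structural fact for last-exit times from the enlargement-of-filtrations theory of Mansuy-Yor and Nikeghbali). On the other hand, $Z$ is continuous on $[0,T]\times(0,\infty)$ and of class $C^{1,2}$ on each of the open strips $[0,T]\times(0,L)$ and $[0,T]\times(L,\infty)$, so Peskir's local time-space formula yields
\begin{align*}
Z(t,X_t) = Z(0,X_0) &+ \int_0^t \bigl(\partial_s Z + \mathbb{L}_X Z\bigr)(s,X_s)\, I\{X_s \neq L\}\, ds \\
&+ \int_0^t \sigma X_s\, \partial_x Z(s,X_s)\, I\{X_s \neq L\}\, dW_s \\
&+ \tfrac{1}{2} \int_0^t \bigl(\partial_x Z(s,L+) - \partial_x Z(s,L-)\bigr)\, dl^L_s(X).
\end{align*}
Uniqueness of the Doob-Meyer decomposition now forces equality of both the martingale parts and the predictable finite-variation parts. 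Since $dA^\theta$ and $dl^L_\cdot(X)$ are both carried by $\{s: X_s=L\}$ whereas the $ds$-integral above is supported on its complement, that $ds$-integral must vanish identically in $t$.

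It remains to pass from the pathwise identity $\int_0^t (\partial_s Z + \mathbb{L}_X Z)(s,X_s)\,I\{X_s\neq L\}\, ds \equiv 0$ to the pointwise PDE on $[0,T]\times((0,L)\cup(L,\infty))$. This is a standard consequence of the continuity of $\partial_t Z + \mathbb{L}_X Z$ on that open set combined with the non-degeneracy of the geometric Brownian motion $X$: differentiating the integral in $t$ and varying the starting point of $X$, a nonzero integrand at some interior $(t_0,x_0)$ with $x_0 \neq L$ would survive to give a nonzero integral with positive probability, which is impossible. (On $(L,\infty)$ this is in any case immediate because $Z\equiv 1$ there.) The one step where I expect a genuine obstacle is the identification of the support of $A^\theta$ as the trace of $\{X=L\}$ — this is the single ingredient beyond what appeared in the infinite-horizon proof, and it rests on the last-exit-time machinery cited in the introduction. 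Everything else is a direct transcription of the proof of equation (2.3).
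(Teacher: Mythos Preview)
Your argument is correct and coincides with the paper's own proof: it applies the local time-space formula to $Z(t,X_t)$, compares with the Doob-Meyer decomposition of the Az\'ema supermartingale (whose compensator is carried by $\{s:X_s=L\}$), and reads off the PDE from the vanishing of the absolutely continuous finite-variation part. The paper is somewhat terser---it simply asserts the identification of $M_t$ and $A_t$ and declares ``consequently, \eqref{PDEOZL} follows''---but the route is identical.
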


\begin{proof}
Let us apply the \textit{change-of-variable} formula (see \cite{Peskir2005AC}) to obtain
\begin{align}
d Z(t, X_t) &= \left( \frac{\partial }{\partial t} Z + r X_t \frac{\partial}{\partial x} Z + \frac{1}{2} \sigma^2 {X_t}^2 \frac{\partial^2}{\partial x^2} Z \right) (t, X_t) I\{ X_t \neq L \} dt \label{COVOZF}\\
&\,\,\,\,\,\,\,+\sigma X_t \frac{\partial}{\partial x} Z(t, X_t)  I\{ X_t \neq L \} d W_t + \frac{1}{2} \left( \frac{\partial}{\partial x} Z(t, L+) - \frac{\partial}{\partial x} Z(t, L-)   \right) dl_t^L, \nonumber
\end{align}
where $l_t^L$ is the local time of $X$ at the level $L$ given by
\[ l_t^{L}=\mathbb{P}-\lim_{\epsilon \to 0} \frac{1}{2\epsilon} \int_0^t I\{ | X_s - L | < \epsilon \} d \langle X, X \rangle_s, \]
after which, we can lean on the Doob-Meyer decomposition $Z(t, X_t) = Z(0, x) + M_t - A_t$ to conclude that
\begin{align*}
M_t&= \int_0^t \sigma X_s \frac{\partial}{\partial x} Z(t, X_s) I\{ X_s \neq L \} d W_s, \\
A_t&=\frac{1}{2} \int_0^t \frac{\partial}{\partial x} Z(s, L-) dl_s^L,
\end{align*}
where $M_t$ is the c$\grave{a}$dl$\grave{a}$g martingale and $A_t$ is the predictable projection whose measure $dA_t$ is carried by the set $\{t: X_t=L\}$, and consequently, \eqref{PDEOZL} follows.
\end{proof}

\begin{Lemma}\label{MAPOZwrttax}  The map $t\mapsto Z(t, x)$ is decreasing on $[0, T]$ and  $x\mapsto Z(t, x)$ is increasing on $(0, \infty)$.
\end{Lemma}

\begin{proof}
The claim follows directly from computing the derivatives with respect to $t$ and $x$. 
\end{proof}

\subsection{The Free-boundary Problem} \label{TFBPC5}

By the strong Markov property of $X$, we generalise the original problem \eqref{OSPFL} as follows:
\begin{align}
V(t, x)&=\sup_{0\leq\tau\leq{T-t}} \mathbb{E}_{t, x} \left( e^{-r\tau} \left( K-X_{t+\tau} \right)^+ Z(t+\tau, X_{t+\tau})  \right), \label{OSPFLR}
\end{align}
where the supremum is taken over all stopping times $\tau$ of $X$ with values in $[0, T-t]$ and $\mathbb{E}_{t, x}$ is taken with respect to the measure $\mathbb{P}_{t, x}$ under which $X_t=x$. For simplicity, let $G:[0, T]\times(0, \infty)\mapsto[0,K]$ be the gain function such that $G(t, x)=(K-x)^+Z(t, x)$. The infinitesimal generator of the process $X$ is given by $\mathbb{L}_X=rx \frac{\partial}{\partial x}  + \frac{\sigma^2}{2} x^2 \frac{\partial^2}{\partial x^2}$.

Before formulating the \textit{free-boundary problem}, we introduce the continuation and stopping sets.
\begin{Definition}
The continuation set $\mathcal{C}$ and the stopping set $\mathcal{D}$ are defined as follows:
\begin{align}
\mathcal{C}&=\{ (t, x)\in[0, T) \times (0, \infty): V(t, x)>G(t, x) \}, \label{CSVGL} \\
\mathcal{D}&=\{ (t, x)\in[0, T) \times (0, \infty): V(t, x)=G(t, x) \} \cup \{ (T, x): x\in(0, \infty) \}, \label{DSVGL}
\end{align}
and the first entry time $\tau_\mathcal{D}$ of $X$ into $\mathcal{D}$ is defined by
\begin{align}
\tau_\mathcal{D} = \inf \{ 0\leq s \leq T-t: ( t+s, X_{t+s}^x )\in \mathcal{D}  \} \wedge (T-t).
\end{align}
\end{Definition}
According to \cite[Corollary 2.9, Page 46]{PeskirandShiryaev}, it is sufficient to show that $V$ is lower semi-continuous and $G$ is upper semi-continuous in order to establish the optimality of $\tau_{\mathcal{D}}$. It is worth mentioning that the partial derivative $Z_x(t, x)$ has a singularity at the point $(L, T)$, and therefore, establishing the continuity of the value function $V$ can only be pursued later. 

\begin{Lemma} \label{VILSCC5}
The value function $V$ is l.s.c on $[0, T)\times(0, \infty)$. 
\end{Lemma}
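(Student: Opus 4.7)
My plan follows the standard perturbation argument for lower semicontinuity of the value function in optimal stopping, as in Peskir and Shiryaev \cite{PeskirandShiryaev}. Fix $(t_0, x_0) \in [0,T]\times(0,\infty)$, a sequence $(t_n, x_n) \to (t_0, x_0)$ in the domain, and $\epsilon > 0$. The aim is to construct a stopping rule $\tau_n \leq T - t_n$ based on an $\epsilon$-optimal rule $\tau^{\epsilon} \leq T - t_0$ at $(t_0, x_0)$, whose expected payoff at $(t_n, x_n)$ converges to the almost-optimal payoff at $(t_0, x_0)$. Since $V(t_n, x_n)$ dominates any such expected payoff, letting $\epsilon \downarrow 0$ will give $\liminf_n V(t_n, x_n) \geq V(t_0, x_0)$.

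The key steps, in order, are as follows. First, I would observe that the explicit form of $Z$ together with Corollary \ref{MAPOZwrttax} shows that $Z$, and hence $G(t,x) = (K-x)^+ Z(t,x)$, is continuous on $[0, T] \times (0, \infty)$ except at the single point $(T, L)$; in particular one checks $Z(t, L) = 1$ for every $t \in [0, T]$ from $\Phi(y)+\Phi(-y)=1$. Second, I would couple the geometric Brownian motions $X^{x_n}$ and $X^{x_0}$ on one probability space by driving them with the same Brownian motion relative to their respective starting times, so that $X^{x_n}_{t_n + s} \to X^{x_0}_{t_0 + s}$ almost surely, uniformly over $s$ in any compact interval. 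Third, setting $\tau_n = \tau^{\epsilon} \wedge (T - t_n)$ gives $\tau_n \to \tau^{\epsilon}$ and $(t_n + \tau_n, X^{x_n}_{t_n + \tau_n}) \to (t_0 + \tau^{\epsilon}, X^{x_0}_{t_0 + \tau^{\epsilon}})$ pathwise. Fourth, bounded convergence (using $0 \leq G \leq K$) together with almost-sure continuity of $G$ at the limit point yields
\[
\lim_n E_{t_n, x_n}\!\bigl[e^{-r\tau_n} G(t_n + \tau_n, X_{t_n + \tau_n})\bigr] = E_{t_0, x_0}\!\bigl[e^{-r\tau^{\epsilon}} G(t_0 + \tau^{\epsilon}, X_{t_0 + \tau^{\epsilon}})\bigr] \geq V(t_0, x_0) - \epsilon,
\]
and since $V(t_n, x_n)$ dominates the left-hand side, $\liminf_n V(t_n, x_n) \geq V(t_0, x_0) - \epsilon$. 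Letting $\epsilon \downarrow 0$ finishes the argument.

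The main obstacle is the singularity of $G$ at $(T, L)$ flagged by the remark preceding the lemma, which is why only semicontinuity (and not full continuity) can be pursued here. To route around it I would argue that the point $(t_0 + \tau^{\epsilon}, X^{x_0}_{t_0 + \tau^{\epsilon}})$ avoids $(T, L)$ with probability one: $X^{x_0}$ has an absolutely continuous law at every deterministic time, so $P(X^{x_0}_T = L) = 0$, and for natural classes of $\epsilon$-optimal rules (those stopping upon entering a space-time set whose closure does not meet $(T, L)$) the landing point misses the singularity almost surely as well. Granted this, the continuity of $G$ needed for the bounded convergence step holds $P$-a.s., and the pathwise convergence of both the argument and the value transfers to the expectation. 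Identifying an $\epsilon$-optimal $\tau^{\epsilon}$ with this separation property, and checking carefully that $\tau_n \to \tau^{\epsilon}$ pathwise under the coupling, is the technical point I expect to require the most care.
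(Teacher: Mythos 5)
Your argument is correct in outline but takes a genuinely different route from the paper's. The paper fixes a single stopping time $\tau$ and shows that $(t,x)\mapsto E_{t,x}\bigl(e^{-r\tau}G(t+\tau,X_{t+\tau})\bigr)$ is continuous separately in $t$ (using the monotonicity of $t\mapsto Z(t,x)$ from Corollary \ref{MAPOZwrttax} and dominated convergence) and in $x$ (using $(K-y)^+-(K-z)^+\leq(z-y)^+$ and monotone convergence), and then invokes the fact that a supremum of (lower semi)continuous maps is l.s.c. You instead perturb an $\epsilon$-optimal rule, couple the diffusions through a common driving Brownian motion, truncate, and pass to the limit by bounded convergence. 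Your version addresses \emph{joint} lower semicontinuity in $(t,x)$ head-on and isolates exactly where the discontinuity of $G$ could bite; the paper's version is shorter because, for a fixed $\tau$, only one-variable continuity is ever checked, and the joint singularity of $Z$ at $(T,L)$ never has to be confronted (note that $t\mapsto Z(t,L)\equiv1$ and $x\mapsto Z(T,x)$ are each continuous in one variable at the relevant points even though $Z$ fails to be jointly continuous at $(T,L)$).

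One remark on the step you flag as delicate: you do not need to restrict to $\epsilon$-optimal rules with a special separation property, and it is not clear how you would justify that such rules can always be chosen without circularity. For an arbitrary $\tau^{\epsilon}\leq T-t_0$ the landing point $\bigl(t_0+\tau^{\epsilon},X^{x_0}_{t_0+\tau^{\epsilon}}\bigr)$ avoids $(T,L)$ almost surely for free: on $\{\tau^{\epsilon}<T-t_0\}$ the time coordinate is strictly less than $T$, while on $\{\tau^{\epsilon}=T-t_0\}$ one has $P\bigl(X^{x_0}_T=L\bigr)=0$ because $X_T$ has a density. Since $G$ is continuous at every point of $[0,T]\times(0,\infty)$ other than $(T,L)$, and only continuity of $G$ at the a.s. limit point is needed for the pathwise convergence $G\bigl(t_n+\tau_n,X^{x_n}_{t_n+\tau_n}\bigr)\to G\bigl(t_0+\tau^{\epsilon},X^{x_0}_{t_0+\tau^{\epsilon}}\bigr)$, the bounded convergence step goes through for every choice of $\tau^{\epsilon}$. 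With that observation your proof closes.
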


\begin{proof}
The proof is provided in Appendix \ref{ALAP}.
\end{proof}

\begin{Lemma}\label{tauDisoptimal}
The stopping time $\tau_{\mathcal{D}}$ is optimal for the optimal stopping problem \eqref{OSPFLR}.
\end{Lemma}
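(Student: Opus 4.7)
The plan is to invoke Corollary 2.9 of Peskir–Shiryaev (pp.~46), which asserts that the first entry time into the stopping set is optimal once three ingredients are in place: (a) an integrability bound of the form $E_{t,x}\big(\sup_{0\le s\le T-t} e^{-rs}G(t+s, X_{t+s})\big)<\infty$, (b) lower semi-continuity of the value function $V$, and (c) upper semi-continuity of the gain function $G$.

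Ingredient (a) is immediate: since $Z(t,x)\le 1$ and $(K-x)^+\le K$, we have $0\le G(t,x)\le K$, so the supremum is dominated by $K$. Ingredient (b) is exactly Lemma \ref{VILSCC5}, which we already have. So the only genuine work is ingredient (c). Writing $G(t,x)=(K-x)^+ Z(t,x)$, the factor $(K-x)^+$ is globally continuous, so upper semi-continuity of $G$ reduces to checking the regularity of $Z$. From the explicit formula in Proposition \ref{PSPML}, I would verify first that $Z$ is jointly continuous on $[0,T)\times(0,\infty)$, then examine the terminal boundary $\{T\}\times(0,\infty)$. At $t=T$ we have $Z(T,x)=\I_{\{x\ge L\}}$. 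For $x_0\ne L$, a direct limit computation from the formula (sending $T-t\to0^+$ and controlling the sign of $\log(L/x)$) shows $\lim_{(t,x)\to(T,x_0)}Z(t,x)=Z(T,x_0)$, giving continuity. The only singular point is $(T,L)$, where $Z(T,L)=1$; but there $Z\le 1$ everywhere forces $\limsup_{(t,x)\to(T,L)}Z(t,x)\le 1=Z(T,L)$, so $Z$ is upper semi-continuous at $(T,L)$.

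Combining, $G$ is upper semi-continuous on $[0,T]\times(0,\infty)$ (in fact continuous except at $(T,L)$). Hence (a), (b), (c) all hold; by the cited corollary, $\tau_\mathcal{D}$ is optimal in \eqref{OSPFLR}. As a byproduct, since $V$ is l.s.c.~and $G$ is u.s.c., the function $V-G$ is l.s.c.~so the stopping set $\mathcal{D}=\{V-G\le 0\}\cap([0,T)\times(0,\infty))\cup\{T\}\times(0,\infty)$ is closed, which ensures that $\tau_\mathcal{D}$ is a genuine stopping time.

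The main subtlety I expect is the verification at the singular corner $(T,L)$: the gain function inherits a genuine jump there from the Az\'ema supermartingale, and one must confirm that the jump goes the right way for upper (rather than lower) semi-continuity. The argument above is what rescues us: the jump of $Z$ at $(T,L)$ is upward, and the multiplier $(K-x)^+$ is continuous and bounded, so the global bound $G\le K-L$ on a neighbourhood of $(T,L)$ together with $G(T,L)=K-L$ gives u.s.c.\ with no loss.
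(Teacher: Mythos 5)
Your ingredient (c) is correct and is the genuinely new part of your route: since $Z\le 1=Z(T,L)$ everywhere, $\limsup_{(t,x)\to(T,L)}Z(t,x)\le Z(T,L)$, and away from $(T,L)$ the explicit formula gives continuity, so $G=(K-x)^+Z$ is indeed u.s.c.\ on all of $[0,T]\times(0,\infty)$. The gap is ingredient (b). Lemma \ref{VILSCC5} is stated on $[0,T]\times(0,\infty)$ but its proof only establishes (semi)continuity for $t<T$, and in fact $V$ is \emph{not} l.s.c.\ at the very corner you are worried about: since the only admissible stopping time at $t=T$ is $\tau=0$, one has $V(T,x)=G(T,x)=(K-x)^+ I\{x\ge L\}$, so $V(T,x)=0$ for $x<L$ while $V(T,L)=K-L>0$, and approaching $(T,L)$ along $\{T\}\times(0,L)$ gives $\liminf V=0<V(T,L)$. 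The upward jump of $Z$ at $(T,L)$ that rescues upper semi-continuity of $G$ is exactly what destroys lower semi-continuity of $V$ there (because $V$ and $G$ coincide on the terminal slice). Consequently Corollary 2.9 cannot be invoked verbatim on the closed time-space domain: one of its stated hypotheses fails, and your byproduct argument for closedness of $\mathcal{D}$ rests on the same false premise (closedness does hold, but only because the whole terminal slice is put into $\mathcal{D}$ by definition, not because $V-G$ is l.s.c.).

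This is precisely why the paper does not apply the corollary globally. Its proof invokes the existence result only on $[0,T)\times(0,\infty)$, where $G$ is continuous and $V$ is l.s.c., to get optimality of $\bar{\tau}_\mathcal{D}=\inf\{s\in[0,T-t):(t+s,X^x_{t+s})\in\bar{\mathcal{D}}\}$ for the auxiliary problem with supremum over $\tau\in[0,T-t)$, and then writes $V(t,x)$ as the maximum of this auxiliary value and the value of stopping deterministically at $T-t$, concluding that $\tau_\mathcal{D}=\bar{\tau}_\mathcal{D}\wedge(T-t)$ is optimal. To repair your proposal you must either carry out this same terminal-time split, or quote and justify a version of the existence theorem whose hypothesis at the terminal slice is only upper semi-continuity of $G(T,\cdot)$; you cannot simply assert that l.s.c.\ of $V$ on the closed domain ``is exactly Lemma \ref{VILSCC5}''.
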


\begin{proof}
The proof is provided in Appendix \ref{ALAP}.
\end{proof}

It will be shown in the next section that the continuation and stopping sets can be defined as
\begin{align}
&\mathcal{C}=\{ (t, x) \in [0, T)\times(0, \infty): x>b(t) \}, \label{CSIBL}\\
&\mathcal{D}=\{ (t, x) \in [0, T)\times(0, \infty): x\leq b(t) \} \cup \{ (T, x): x\leq b(T) \}, \label{DSIBL}
\end{align}
where $b:[0, T]\mapsto \mathbb{R}$ is the unknown optimal stopping boundary such that $\tau_\mathcal{D}$ can be rewritten as
\begin{align}
\tau_\mathcal{D} = \inf \{ 0\leq s \leq T-t : X_{t+s}^x \leq b(t+s) \}\wedge (T-t).
\end{align}
Moreover, by Theorem 2.4 in \cite[Page 37 and Page 130 for the justification of $V$ being a solution to the PDE]{PeskirandShiryaev} and Lemma \ref{tauDisoptimal}, the \textit{free-boundary} problem can be formulated as:
\begin{align}
&{V}_t + \mathbb{L}_X {V} - r {V} = 0 \qquad\qquad\qquad\qquad\, \text{for $(t, x)\in\mathcal{C}$}, \label{FBPL1}\\
&{V}(t, x)={G}(t, x) \qquad\qquad\qquad\qquad\,\,\,\,\,\,\,\,\,\, \text{for $x=b(t)$, instantaneous stopping},\label{FBPL2}\\
&{V}_x(t, x) ={G}_x(t, x) \qquad\qquad\qquad\qquad\,\,\,\,\, \text{for $x=b(t)\neq L$, smooth-fit},\label{FBPL3}\\
&{V}(t, x)>{G}(t, x) \qquad\qquad\qquad\qquad\,\,\,\,\,\,\,\,\,\, \text{for $(t, x)\in\mathcal{C}$},\label{FBPL4}\\
&{V}(t, x)={G}(t, x) \qquad\qquad\qquad\qquad\,\,\,\,\,\,\,\,\,\, \text{for $(t, x)\in\mathcal{D}$}.\label{FBPL5}
\end{align}
Equations \eqref{CSIBL}, \eqref{DSIBL}, \eqref{FBPL2} and \eqref{FBPL3} are to be shown in Section \ref{TCASSC5}.

\subsection{The Continuation and Stopping Sets} \label{TCASSC5}

As further preparation for the detailed analysis, we lean on the change-of-variable formula (see \cite{Peskir2005AC}), as the function $G$ is not differentiable at $K$ and $L$, to obtain
\begin{align}
e^{-rs} {G}\left(t+s, X_{t+s}^x\right) & ={G}(t, x) \nonumber\\
&\,\,\,+ \int_0^s e^{-ru} \left( -r {G} + {G}_t + \mathbb{L}_X {G} \right) (t+u, X_{t+u}^x) I \{ X_{t+u}\neq K, X_{t+u}\neq L \}du \nonumber\\
&\,\,\,+ \int_0^s e^{-ru} \sigma X_{t+u}^x {G}_x  (t+u, X_{t+u}^x) I \{X_{t+u}\neq K, X_{t+u}\neq L  \}dW_u\nonumber\\
&\,\,\,+\frac{1}{2}  \int_0^s e^{-ru} \left( {G}_x(t+u, L+) - {G}_x(t+u, L-) \right) dl_u^L(X^x)\nonumber\\
&\,\,\,+\frac{1}{2}  \int_0^s e^{-ru} \left( {G}_x(t+u, K+) - {G}_x(t+u, K-) \right) dl_u^K(X^x), \label{COVGLK}
\end{align}
where $l_s^K$ and $l_s^L$ are the local times of $X$ at the level $K$ and $L$ respectively,
and for simplicity, we denote the martingale term as
\begin{align*}
M_s=\int_0^s e^{-ru} \sigma X_{t+u}^x {G}_x  (t+u, X_{t+u}^x) I \{X_{t+u}\neq K, X_{t+u}\neq L  \}dW_u,
\end{align*}
so that $\mathbb{E}_{t, x} \left[ M_s \right]=0$ under measure $\mathbb{P}_{t, x}$ for each $s\in[0, T-t]$. A fairly easy calculation then shows that,
upon defining $H(t, x) := \left( -r {G} + {G}_t + \mathbb{L}_X {G} \right) (t, x)$ and using Proposition \ref{PDEOZCL},
\begin{align}
H(t, x)=
\begin{cases}
\left(-r K Z-\sigma^2 x^2 Z_x\right)(t, x),&\text{for $x<L<K$},\\
-r K, & \text{for $L\leq x<K$},\\
0, & \text{for $x>K$}. \label{tnnohH}
\end{cases}
\end{align}
By making use of the above computations and taking the expected value with respect to $\mathbb{P}_{t,x}$ in \eqref{COVGLK}, we find that
\begin{align}
&\mathbb{E}_{t, x} \left[e^{-rs} {G}\left(t+s, X_{t+s}\right)\right]= {G}(t, x)  + \mathbb{E}_{t, x} \left[ -rK \int_0^s e^{-ru} I\{ L\leq X_{t+u}<K \} du \right] \nonumber\\ 
&\,\,\,\, + \mathbb{E}_{t, x} \left[ \int_0^s e^{-ru} \left( -rKZ -\sigma^2 X_u^2 Z_x \right) (t+u, X_{t+u}) I\{ X_{t+u}<L \}du \right] \nonumber\\
&\,\,\,\,- \frac{1}{2} \mathbb{E}_{t, x} \left[  \int_0^s e^{-ru} (K-L) Z_x(t+u, L-) dl_u^L(X)  \right] + \frac{1}{2} \mathbb{E}_{t, x}\left[ \int_0^s e^{-ru} dl_u^K(X) \right], \label{GKBTC}
\end{align}
from which, it follows that
\begin{Lemma} \label{KBTSS}
All points $(t, x)\in[0, T)\times (K, \infty)$ belong to the continuation set $\mathcal{C}$.
\end{Lemma}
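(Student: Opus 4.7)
The plan is to reduce membership in $\mathcal{C}$ to a single strict-positivity statement and verify it with a concrete stopping time. For any $(t,x) \in [0,T) \times (K,\infty)$ we have $(K-x)^+ = 0$, hence $G(t,x) = 0$, so showing $(t,x) \in \mathcal{C}$ is equivalent to showing $V(t,x) > 0$. Since $V$ is a supremum over stopping times, it suffices to exhibit one $\tau \in [0, T-t]$ for which $E_{t,x}\bigl(e^{-r\tau}(K-X_{t+\tau})^+ Z(t+\tau, X_{t+\tau})\bigr) > 0$.

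To construct such a $\tau$, I would fix a level $K' \in (L, K)$ (possible by the standing assumption $K > L$) and set $\tau := \inf\{s \geq 0 : X_{t+s}^x \leq K'\} \wedge \tfrac{T-t}{2}$. On the event $A := \{\tau < (T-t)/2\}$, continuity of the geometric Brownian paths forces $X_{t+\tau} = K'$, so $(K - X_{t+\tau})^+ = K - K' > 0$; moreover $t+\tau < T$, and Proposition \ref{PSPML} together with its closed-form expression gives $Z(t+\tau, X_{t+\tau}) > 0$, because both summands of $Z$ are strictly positive whenever $t < T$.

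It remains to observe that $P_{t,x}(A) > 0$, which is standard: a geometric Brownian motion started at $x > K'$ hits the level $K'$ within any fixed positive time with strictly positive probability (e.g. by the explicit formula for the distribution of the minimum of Brownian motion with drift). Combining the three ingredients, the expected reward is bounded below by $e^{-r(T-t)/2}\,(K-K')\, \bigl(\min_{t \leq s \leq (t+T)/2} Z(s, K')\bigr)\, P_{t,x}(A)$, which is strictly positive by compactness and continuity of $Z$ on $[t, (t+T)/2] \times \{K'\}$. Hence $V(t,x) > 0 = G(t,x)$, placing $(t,x)$ in $\mathcal{C}$. I do not anticipate any real obstacle in executing this plan; the only step deserving a sentence of care is the strict positivity of $Z$ on $[0,T) \times (0,\infty)$, which is immediate from its explicit formula.
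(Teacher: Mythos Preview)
Your argument is correct and follows precisely the same idea as the paper: stopping immediately yields zero payoff while there is positive probability of reaching a level in $(L,K)$ before $T$ and collecting a strictly positive reward. The paper gives only this one-line verbal justification, whereas you supply a careful implementation via the hitting time of a fixed level $K'\in(L,K)$; in fact your bound simplifies further since $Z(s,K')=1$ for $K'>L$.
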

\begin{proof}
The proof follows directly from the observation that stopping on $[0, T)\times (K, \infty)$ gives one null payoff, whereas waiting yields a positive probability to collect a strictly positive payoff, see \cite[Page 558]{DetempleandKitapbayev2021}.
\end{proof}

\begin{rem}
Another message of equation \eqref{GKBTC}, as we shall see shortly, is that the optimal stopping set over $[0, T]$ depends on the relative positions of the level $L$ and of the optimal stopping boundary $B(t)$ of the standard American put option with payoff $G^A(x)=(K-x)^+$. (See \cite{BroadieandDetemple}, \cite{DetempleandKitapbayev2018}.)
\end{rem}

For notational convenience, we let
$V^A(t, x)$, $G^A(x)=(K-x)^+ $ and $B(t)$ denote the value function, the gain function and the optimal stopping boundary for the standard American put option with the same maturity and strike price as the current contract. Then, we know from \cite{PeskirandShiryaev} that
\begin{align*}
&V^A(t, x)>G^A(x) \qquad\text{for $x>B(t)$},\\
&V^A(t, x)=G^A(x) \qquad\text{for $x\leq B(t)$},
\end{align*}
where $B(t)$ is unique and $B(T-)=K$; and thus, its stopping set $\mathcal{D}^A$ and continuation set $\mathcal{C}^A$ equal:
\begin{align*}
&\mathcal{D}^A=\{ (t, x)\in [0, T]\times(0, \infty): x\leq B(t) \},\\
&\mathcal{C}^A=\{ (t, x)\in [0, T]\times(0, \infty): x>B(t)  \}.
\end{align*}

Also we define $t_*\in [0, T)$ as follows:
\begin{align*}
\begin{cases}
t_*=0, &\qquad\text{if $B(0) \geq L$},\\
t_*=B^{-1}(L), &\qquad\text{otherwise},
\end{cases}
\end{align*}
and since the map $t\mapsto B(t)$ is strictly increasing, $t_*$ is uniquely defined (see Figures \ref{TYBC51} and \ref{TYBC52}). Moreover, let us emphasise that $t_*<T$, as $L<K$.

\begin{Lemma} \label{SSINE}
All points $(t, x)\in [t_*, T]\times [L, B(t)] $ belong to the stopping set $\mathcal{D}$.
\end{Lemma}
\begin{proof}
Since $(t, x)\in[t_*, T]\times[L, B(t)]$, it follows from the construction of the stopping set for the standard American put option that
\begin{align*}
V^A(t, x)=G^A(x),
\end{align*}
and from equation \eqref{XtgeqL}, we note that in this set, $G^A(x)={G}(t, x)$. Now, knowing that
\begin{align*}
{V}(t, x)&=\sup_{0\leq \tau \leq T-t} \mathbb{E}_{t, x} \left[ e^{-r\tau} Z\left(t+\tau, X_{t+\tau}\right) (K- X_{t+\tau})^+ \right] \\
&\leq \sup_{0\leq \tau \leq T-t}  \mathbb{E}_{t, x} \left[ e^{-r\tau} (K- X_{t+\tau})^+ \right] = V^A(t, x),
\end{align*}
where the inequality is due to function $Z$ being a probability and consequently, 
\begin{align*}
0=V^A(t, x)-G^A(x)\geq {V}(t, x)-{G}(t, x),
\end{align*}
on the set $\{ (t, x)\in [t_*, T]\times [L, B(t)] \}$, which forces the equation ${V}={G}$ in the sense that by definition, $V - G \geq 0$ and thereby, proving the desired assertion.
\end{proof}

With a little additional work, we can extend Lemma \ref{SSINE} to the following result:

\begin{Lemma} \label{SSINEi}
All points $(t, x)\in [t_*, T]\times (0, B(t)] $ belong to the stopping set $\mathcal{D}$.
\end{Lemma}
\begin{proof}
The proof follows from the argument in \cite[Page 559, (i)]{DetempleandKitapbayev2021} and amounts to showing that $[t_*, T]\times [0, L]$ also belongs to the stopping set $\mathcal{D}$. Suppose, on the contrary, that there exists a point $(t, x)\in[t_*, T]\times(0, L)$ being in set $\mathcal{C}$ such that $V(t, x)>G(t, x)$ by definition. Then, we run the process $X$ starting from $(t, x)\in [t_*, T]\times(0, L)$ and we see  from equation \eqref{GKBTC} that the second, third and fourth terms in its right-hand side are negative, and in order to ensure $V>G$, the process shall continue till it hits level $K$ as the local time term is positive, but $L<K$, indicating the process cannot hit level $K$ before exercising at level $L$ due to Lemma \ref{SSINE}. This implies $V<G$, which is a contradiction. Therefore, it is optimal to stop immediately on this set and the conclusion follows.
\end{proof}

\begin{rem}\label{SSINEii}
Lemmas \ref{SSINE} and \ref{SSINEi} show that the stopping set is not empty and that if $t_*=0$, then all points $(t, x)\in[0, T]\times(0, L]$ belong to the stopping set $\mathcal{D}$, in the sense that 
\begin{align*}
[0, T]\times[L, B(t)] \subset  [0, T]\times (0, B(t)] ,
\end{align*}
such that in $[0, T]\times[L, B(t)]$,  the relation $V^A(t, x)=G^A(x)$ and $G^A(x)={G}(t, x)$ once again entails ${V}={G}$, after which, the similar argument as Lemma \ref{SSINEi} shows $(t, x)\in[0, T]\times(0, L]$ belongs to $\mathcal{D}$.
\end{rem}
\begin{figure}[h]
\centering
\begin{tikzpicture}
\draw[->, ultra thick] node[left]{$0$}(0, 0)-- (5, 0) node[right]{$T$};
\draw[->, ultra thick] (0, 0)--(0, 5) node[right]{$x$};
\draw[name path = A , thick](0, 1.95)..controls (1, 1.96) and (3.5, 2.3).. node[above]{$B(t)$} (5, 4.9);
\draw[dotted] (0, 4.9)-- node[below] {\tiny{$K$}} (5, 4.9);
\draw[name path = B](0, 1.5)--node[below] {$L$}(5, 1.5);
\draw[name path = C, dotted] (5, 0)--(5, 4.9);
\tikzfillbetween[of=A and B]{yellow, opacity=0.5};
\end{tikzpicture}
\begin{tikzpicture}
\draw[name path = D,->, ultra thick] node[left]{$0$}(0, 0)-- (5, 0) node[right]{$T$};
\draw[->, ultra thick] (0, 0)--(0, 5) node[right]{$x$};
\draw[name path = A , thick](0, 1.95)..controls (1, 1.96) and (3.5, 2.3).. node[above]{$B(t)$} (5, 4.9);
\draw[dotted] (0, 4.9)-- node[below] {\tiny{$K$}} (5, 4.9);
\draw[name path = B](0, 1.5)--node[below] {$L$}(5, 1.5);
\draw[orange, dotted, thick] (0, 1.7)--(5, 1.7) node[right]{\tiny{$L+\delta$}};
\draw[name path = C, dotted] (5, 0)--(5, 4.9);
\tikzfillbetween[of=A and D]{yellow, opacity=0.5};
\end{tikzpicture}
\caption{Two sketches to illuminate Lemmas \ref{SSINE} and \ref{SSINEi} for $t_*=0$. The area coloured in yellow is the subset of $\mathcal{D}$ and that $\mathcal{D}^A\subset\mathcal{D}$.}
\label{TYBC51}
\end{figure}
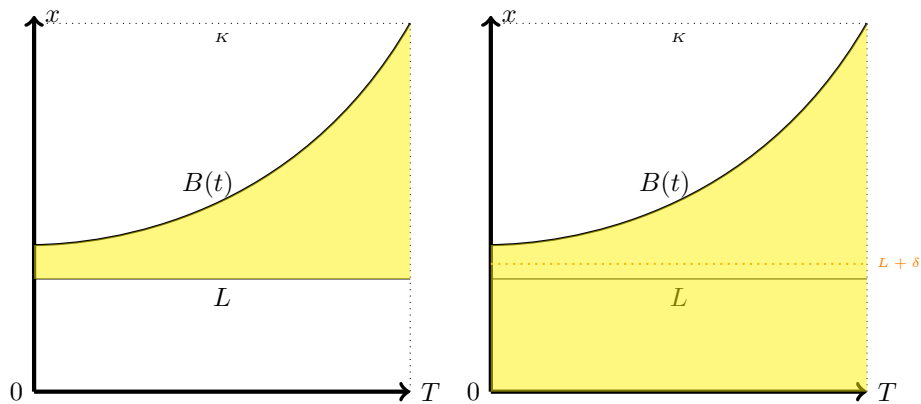
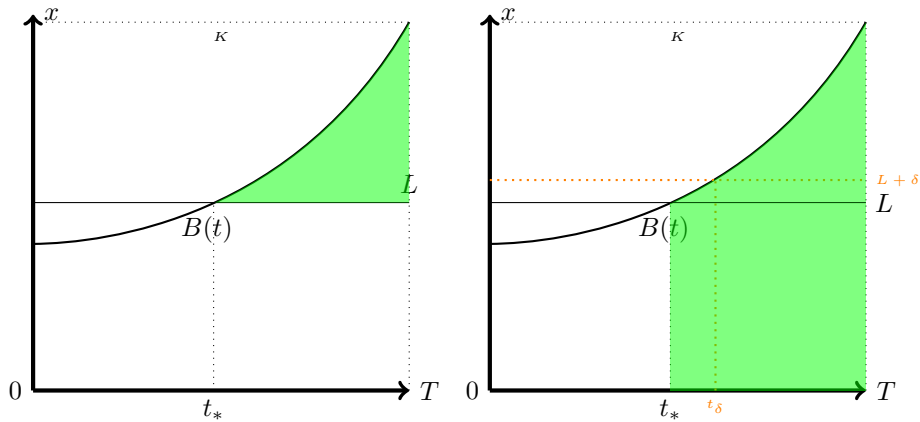
\begin{figure}[h]
\centering
\begin{tikzpicture}
\draw[->, ultra thick] node[left]{$0$}(0, 0)-- (5, 0) node[right]{$T$};
\draw[->, ultra thick] (0, 0)--(0, 5) node[right]{$x$};
\draw[name path = A , thick](0, 1.95)..controls (1, 1.96) and (3.5, 2.3).. node[below]{$B(t)$} (5, 4.9);
\draw[dotted] (0, 4.9)-- node[below] {\tiny{$K$}} (5, 4.9);
\draw[name path = B](0, 2.5)--(5, 2.5)node[above] {$L$};
\draw[name path = C, dotted] (5, 0)--(5, 4.9);
\draw[name path = E, dotted] (2.4,0)node[below]{$t_*$}--(2.4, 2.5);
\fill[green, opacity=0.5, intersection segments={of= A and B, sequence={L2--R2[reverse]}}];
\end{tikzpicture}
\begin{tikzpicture}
\draw[name path = D, ->, ultra thick] node[left]{$0$}(0, 0)--(5, 0) node[right]{$T$};
\draw[->, ultra thick] (0, 0)--(0, 5) node[right]{$x$};
\draw[name path = A , thick](0, 1.95)..controls (1, 1.96) and (3.5, 2.3).. node[below]{$B(t)$} (5, 4.9);
\draw[dotted] (0, 4.9)-- node[below] {\tiny{$K$}} (5, 4.9);
\draw[name path = B](0, 2.5)--(5, 2.5) node[right] {$L$};
\draw[orange, dotted, thick]  (0, 2.8)--(5, 2.8) node[right]{\tiny{$L+\delta$}};
\draw[name path = C, dotted] (5, 0)--(5, 4.9);
\draw[name path =E, dotted] (2.4,0) node[below]{$t_*$}--(2.4, 2.5);
\draw[orange, dotted, thick] (3, 0) node[below]{\tiny{$t_\delta$}}--(3, 2.8);
\fill[green, opacity=0.5, intersection segments={of= A and B, sequence={L2--R2[reverse]}}];
\fill[green, opacity=0.5] (2.4, 0) rectangle (5, 2.5);
\end{tikzpicture}
\caption{Two sketches to illuminate Lemmas \ref{SSINE} and \ref{SSINEi} for $t_*>0$. The area coloured in green is the subset of $\mathcal{D}$.}
\label{TYBC52}
\end{figure}
With Remark \ref{SSINEii} in mind, we now enter the discussions of the properties for sets $\mathcal{C}$ and $\mathcal{D}$ with $B(0)>L$ and $B(0)\leq L$.

\subsubsection{In the Case of $B(0)> L$.} \label{TCASSC5B0GL}

We begin by recalling the definitions of right-, left-, up-, and down-connectedness for the sets $\mathcal{D}$ and $\mathcal{C}$ from \cite[Page 559]{DetempleandKitapbayev2021}.
\begin{Definition}
The set $\mathcal{D}$ is right-connected and the set $\mathcal{C}$ is left-connected if, for $0\leq t_1<t_2<T$ and $x\in(0, \infty)$,
\begin{align*}
{V}(t_1, x) - {G}(t_1, x) \geq  {V}(t_2, x) - {G}(t_2, x) \geq 0.
\end{align*}
The set $\mathcal{D}$ is down-connected if, for $t\in[0, T]$ and $0<y<x$, $(t, x)\in\mathcal{D}$ implies $(t, y)\in\mathcal{D}$; similarly, $\mathcal{C}$ is up-connected if for $t\in[0, T]$ and $0<y<x$, $(t, y)\in\mathcal{C}$ implies $(t, x)\in\mathcal{C}$.
\end{Definition}

\begin{Proposition} \label{SSINEiii}
The stopping set $\mathcal{D}$ is right-connected and the continuation set $\mathcal{C}$ is left-connected.
\end{Proposition}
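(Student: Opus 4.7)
The plan is to establish right-connectedness of $\mathcal{D}$ directly; left-connectedness of $\mathcal{C}$ then follows by contraposition, since $\mathcal{C}$ and $\mathcal{D}$ form a disjoint decomposition of $[0,T)\times(0,\infty)$. Fix $(t_0,x_0)\in\mathcal{D}$ and $t_1\in(t_0,T)$. The aim is to prove $V(t_1,x_0)=G(t_1,x_0)$, which thanks to the universal inequality $V\geq G$ reduces to producing the reverse bound. Lemma~\ref{KBTSS} already forces $x_0\leq K$, so I would split the argument according to whether $x_0\leq L$ or $L<x_0\leq K$.

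The case $x_0\leq L$ is immediate from Remark~\ref{SSINEii}, which under the standing assumption $t_*=0$ already embeds the whole strip $[0,T]\times(0,L]$ into $\mathcal{D}$. The interesting case is $L<x_0\leq K$. The decisive observation here is that $Z(t,x_0)=1$ for every $t\in[0,T)$: starting at $X_t=x_0>L$, path continuity of the geometric Brownian motion forces $\max_{s\in(t,T]}X_s\geq L$ almost surely, which is precisely the remark made for the range $\log(L/X_t)<0$ inside the proof of Proposition~\ref{PSPML}. Consequently $G(t,x_0)=(K-x_0)^+Z(t,x_0)=K-x_0$ does not depend on $t$.

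To finish I would invoke the standard monotonicity $V(t_1,x_0)\leq V(t_0,x_0)$, which is an immediate consequence of Corollary~\ref{MAPOZwrttax}(i) together with the inclusion $[0,T-t_1]\subset[0,T-t_0]$ of admissible stopping-time classes: for every $\tau\in[0,T-t_1]$,
\[
E\bigl[e^{-r\tau}G(t_1+\tau,X_\tau^{x_0})\bigr]\leq E\bigl[e^{-r\tau}G(t_0+\tau,X_\tau^{x_0})\bigr],
\]
so taking suprema delivers the inequality. Chaining everything,
\[
G(t_1,x_0)\leq V(t_1,x_0)\leq V(t_0,x_0)=G(t_0,x_0)=K-x_0=G(t_1,x_0),
\]
forces equality throughout, and hence $(t_1,x_0)\in\mathcal{D}$.

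I do not expect a genuine technical obstacle here. The conceptual point worth flagging is that a direct attempt at proving monotonicity of $V-G$ in $t$ on the whole space would have to wrestle with the genuinely time-dependent part of $G$ living in $\{x<L\}$, which is delicate because of the singularity of $Z_x$ at $(T,L)$ mentioned in the remark just before Lemma~\ref{VILSCC5}. The argument above sidesteps this entirely by exploiting the collapse $Z(\cdot,x_0)\equiv 1$ above $L$ and by invoking Remark~\ref{SSINEii} below $L$, and so is tailored to the present regime $t_*=0$.
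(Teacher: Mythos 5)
Your proof is correct, but it reaches the conclusion by a different route than the paper's. The paper fixes $t_1<t_2$, takes the optimal stopping time $\tau_2$ for $V(t_2,x)$, observes via Remark \ref{SSINEii} that $\tau_2$ occurs strictly before the process reaches $L$, and then compares $E\left(e^{-r\tau_2}G(t_1+\tau_2,X^x_{\tau_2})\right)$ with $E\left(e^{-r\tau_2}G(t_2+\tau_2,X^x_{\tau_2})\right)$ through the local-time decomposition \eqref{GKBTC}: since the path stays above $L$ up to $\tau_2$, only the time-independent drift $H=-rK$ contributes, the two integral terms cancel, and one obtains $V(t_1,x)-G(t_1,x)\geq V(t_2,x)-G(t_2,x)$ for every $x$ in one stroke. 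You instead split on the position of $x_0$ relative to $L$: below $L$ you quote Remark \ref{SSINEii} directly, and above $L$ you exploit that $Z(\cdot,x_0)\equiv 1$ makes $G(\cdot,x_0)$ constant while $V(\cdot,x_0)$ is nonincreasing (shrinking class of stopping times plus $Z$ decreasing in $t$), so the chain $G(t_1,x_0)\leq V(t_1,x_0)\leq V(t_0,x_0)=G(t_0,x_0)=G(t_1,x_0)$ closes; all the ingredients you invoke (Lemma \ref{KBTSS}, Corollary \ref{MAPOZwrttax}, the monotonicity of $V$ in $t$) are available at this point in the paper. This is in fact the same mechanism the paper uses later for Proposition \ref{PLB01} on $[0,T]\times[L,\infty)$ in the case $t_*>0$, so transplanting it here is legitimate and lets you avoid the It\^o/local-time computation altogether. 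What the paper's computation buys is a template for the monotonicity of $t\mapsto V(t,x)-G(t,x)$ that extends to the region $x<L$ where $G$ genuinely depends on $t$ (the content of Proposition \ref{TMOVMGL} under Assumption \ref{TMOHTC5}); your shortcut is, as you note, tailored to the regime $t_*=0$, but within that regime it is complete.
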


\begin{proof}
Let $0\leq t_1\leq t_2\leq T$ and consider the stopping time $\tau_L=\inf\{0\leq u \leq T-t_2: X_{t_2+u}^x\leq L\}$ and suppose that $\tau_2$ is the optimal stopping time for ${V}(t_2, x)$ so that $\tau_2< \tau_L$, which is a consequence of Remark \ref{SSINEii}. Then, by the time-homogeneous property of GBM,
\begin{align*}
&{V}(t_1, x) - {V}(t_2, x) \geq \mathbb{E} \left[ e^{-r\tau_2} {G}\left( t_1+\tau_2, X_{ \tau_2}^x\right) - e^{-r\tau_2} {G}\left( t_2+\tau_2, X_{ \tau_2}^x\right) \right]\\
&\,\,\,= {G}(t_1, x) - {G}(t_2, x) + \mathbb{E} \left[ \int_0^{\tau_2} e^{-ru} \left( H\left( t_1+u, X_{u}^x \right) - H\left( t_2+u, X_{u}^x \right)  \right) I\{L<X_{u}^x< K \} du \right] \\
&\,\,\, = {G}(t_1, x) - {G}(t_2, x),
\end{align*}
where the second equality follows from $H\left(t_1+u, X_{u}^x\right) = H\left(t_2+u, X_{u}^x\right)=-rK$.
This establishes the following relation:
\begin{align*}
{V}(t_1, x) - {G}(t_1, x) \geq  {V}(t_2, x) - {G}(t_2, x) \geq 0,  
\end{align*}
from which, we see that $(t_1, x)\in\mathcal{D}$ implies $(t_2, x)\in\mathcal{D}$ (i.e., $V(t_1, x) - G(t_1, x) = 0$ implies ${V}(t_2, x) - {G}(t_2, x) =0$) and that $(t_2, x)\in\mathcal{C}$ implies $(t_1, x)\in\mathcal{C}$ (i.e., $V(t_2, x) - G(t_2, x) > 0$ implies ${V}(t_1, x) - {G}(t_1, x) >0$).
\end{proof}

\begin{Proposition}
The stopping set $\mathcal{D}$ is down-connected and the continuation set $\mathcal{C}$ is up-connected.
\end{Proposition}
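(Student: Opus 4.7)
The plan is to fix $t \in [0, T)$ and show that $\mathcal{D}_t := \{x > 0 : (t, x) \in \mathcal{D}\}$ is down-closed in $(0, \infty)$, which gives both statements at once. Two reductions shrink the real work: Lemma \ref{SSINEi} together with Remark \ref{SSINEii} supplies $(0, B(t)] \subset \mathcal{D}_t$ (since $t_* = 0$ forces $B(t) \geq B(0) > L$), and Lemma \ref{KBTSS} gives $(K, \infty) \cap \mathcal{D}_t = \emptyset$. The content of the proposition therefore reduces to: whenever $B(t) < x_1 < x_2 \leq K$ and $(t, x_2) \in \mathcal{D}$, one also has $(t, x_1) \in \mathcal{D}$.

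Fix such a pair $x_1 < x_2$ and couple $X^{x_1}_s = (x_1/x_2)\, X^{x_2}_s$ on a common probability space driven by the same Brownian motion. Let $\tau^*$ be an optimal stopping time for $V(t, x_1)$, available by Lemma \ref{tauDisoptimal}. Because $(0, L] \subset \mathcal{D}_t$ for every $t$ (Remark \ref{SSINEii}) and $\tau^*$ is the first entry time of $(t + \cdot, X^{x_1}_{\cdot})$ into $\mathcal{D}$, the path $X^{x_1}$ cannot reach $L$ before $\tau^*$; hence $L < X^{x_1}_u < X^{x_2}_u$ for all $u \in [0, \tau^*)$. Employing $\tau^*$ as an admissible (sub-optimal) rule for $V(t, x_2)$ and using $V(t, x_2) = G(t, x_2)$ yields
\begin{align*}
V(t, x_1) - G(t, x_1) \leq E\bigl[e^{-r\tau^*}\bigl(G(t+\tau^*, X^{x_1}_{\tau^*}) - G(t+\tau^*, X^{x_2}_{\tau^*})\bigr)\bigr] - \bigl(G(t, x_1) - G(t, x_2)\bigr).
\end{align*}

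I then bound the right-hand side by applying the local-time–space formula \eqref{COVGLK} to $G$ at each starting point $x_1, x_2$ and subtracting. The right-hand side decomposes as the expectation of a zero-mean martingale, plus the drift integral $\int_0^{\tau^*} e^{-ru}\bigl(H(t+u, X^{x_1}_u) - H(t+u, X^{x_2}_u)\bigr) du$, plus local-time contributions at $L$ and $K$ coming from each of the two processes. From \eqref{tnnohH} and the fact that both $X^{x_1}, X^{x_2}$ stay in $[L, \infty)$ on $[0, \tau^*]$, $H$ takes only the values $-rK$ on $[L, K)$ and $0$ on $[K, \infty)$, so the drift difference is either $0$ (when the two values lie on the same side of $K$) or $-rK$ (when $X^{x_1}_u < K \leq X^{x_2}_u$), hence pointwise non-positive. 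The local time at $L$ vanishes since neither process touches $L$. The local time at $K$ has jump coefficient $G_x(t, K^+) - G_x(t, K^-) = 1 > 0$ and its net contribution $\tfrac{1}{2} E\bigl[\int_0^{\tau^*} e^{-ru}(dl^{K, x_1}_u - dl^{K, x_2}_u)\bigr]$ is controlled via the scaling identity $l^{K, x_1}_s = (x_1/x_2)^3\, l^{K x_2/x_1,\, x_2}_s$ combined with the starting constraint $x_2 \leq K$: the latter forces $X^{x_2}$ to cross the nearer barrier $K$ to accumulate local time there, while $l^{K, x_1}$ is governed by the higher and more rarely visited level $K x_2/x_1$, yielding a non-positive expected contribution.

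Combining the three sign estimates gives $V(t, x_1) - G(t, x_1) \leq 0$; together with $V \geq G$ this forces $V(t, x_1) = G(t, x_1)$, i.e.\ $(t, x_1) \in \mathcal{D}$. The main obstacle in executing this plan is the honest comparison of the two discounted local times at the barrier $K$: the pathwise inequality $X^{x_1} < X^{x_2}$ alone does not order the local times at a fixed level, so one must lean on the scaling relation together with a Tanaka-type bookkeeping on $(K - X^{x_i})^+$ to extract the sign. Everything else is a routine sign analysis along the blueprint of the right-connectedness proof just completed.
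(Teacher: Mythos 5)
Your reduction to the strip $B(t)<x_1<x_2\leq K$ and your first inequality
\begin{align*}
V(t, x_1) - G(t, x_1) \leq E\bigl[e^{-r\tau^*}\bigl(G(t+\tau^*, X^{x_1}_{\tau^*}) - G(t+\tau^*, X^{x_2}_{\tau^*})\bigr)\bigr] - \bigl(G(t, x_1) - G(t, x_2)\bigr)
\end{align*}
are both correct, as is the sign analysis of the drift difference. The genuine gap is exactly where you flag it: the comparison of the two discounted local times at $K$ is asserted, not proved. The scaling identity is also misstated --- with the paper's normalisation $l^a_t(X)=\lim_{\epsilon\to0}\frac{1}{2\epsilon}\int_0^t I\{|X_s-a|<\epsilon\}\,d\langle X\rangle_s$ and the coupling $X^{x_1}=(x_1/x_2)X^{x_2}$ one gets $l^{K}_t(X^{x_1})=(x_1/x_2)\,l^{Kx_2/x_1}_t(X^{x_2})$, not the cube --- and even with the correct identity the claim that the level $Kx_2/x_1$ is ``more rarely visited'' enough to beat the prefactor $x_2/x_1>1$ would require an occupation-density estimate for GBM up to the random, discounted time $\tau^*$ that you never supply. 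As written, the argument does not close.

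The fix is an observation you already half-made and which makes the entire coupling unnecessary. You correctly note that $\tau^*=\tau_{\mathcal{D}}$ is the first entry of $(t+\cdot,X^{x_1}_\cdot)$ into $\mathcal{D}$ and use $(0,L]\subset\mathcal{D}_s$ to keep $X^{x_1}$ above $L$. Apply the same reasoning to the level $x_2$: by Proposition \ref{SSINEiii} (right-connectedness), $(t,x_2)\in\mathcal{D}$ forces $(s,x_2)\in\mathcal{D}$ for all $s\geq t$, so by continuity of paths $X^{x_1}$ is absorbed at or before its first visit to $x_2$, i.e.\ $X^{x_1}_u< x_2\leq K$ for all $u<\tau^*$. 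Hence $l^K(X^{x_1})\equiv 0$ on $[0,\tau^*]$ and there is nothing to compare: in your difference the $K$-local-time term for $x_1$ vanishes while the one for $x_2$ enters with a favourable sign. Better still, once $X^{x_1}$ is known never to reach $K$ before $\tau^*$, formula \eqref{GKBTC} applied to the single process started at $(t,x_1)$ with $s=\tau^*$ already gives $V(t,x_1)=E_{t,x_1}(e^{-r\tau^*}G(t+\tau^*,X_{t+\tau^*}))\leq G(t,x_1)$, since every remaining term is non-positive; with $V\geq G$ this yields $(t,x_1)\in\mathcal{D}$ directly. This is precisely the paper's intended argument (Proposition \ref{SSINEiii} combined with the reasoning of Lemma \ref{SSINEi}), and it bypasses the coupling, the subtraction of two local-time--space expansions, and the barrier comparison altogether.
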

\begin{proof}
Immediate from the Proposition \ref{SSINEiii} and a similar argument as in Lemma \ref{SSINEi} and \cite[Page 559, (ii)]{DetempleandKitapbayev2021}.
\end{proof}

\begin{rem}
The connectedness of sets $\mathcal{C}$ and $\mathcal{D}$ entails the monotonicity of the free-boundary $b$.
\end{rem}

We now have sufficient tools to go around the singularity of $Z_x(t, x)$ and establish the continuity of the value function for $t_*=0$.

\begin{Cor}\label{GIUCIZL}
The gain function is uniformly continuous on the state space $[0, T-t]\times[L, \infty)$.
\end{Cor}
\begin{proof}
See Appendix \ref{ALAPii}.
\end{proof}

\begin{Lemma}\label{VICC51}
The value function $V$ is continuous on $[0, T)\times(0, \infty)$. 
\end{Lemma}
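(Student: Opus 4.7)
The plan is to combine the lower semi-continuity from Lemma \ref{VILSCC5} with upper semi-continuity so as to obtain joint continuity on $[0,T)\times(0,\infty)$.

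Fix $(t_0, x_0) \in [0,T)\times(0,\infty)$ and a sequence $(t_n, x_n) \to (t_0, x_0)$. By Lemma \ref{tauDisoptimal}, for each $n$ there exists an optimal stopping time $\tau_n := \tau_{\mathcal{D}}^{(t_n, x_n)} \in [0, T - t_n]$ with $V(t_n, x_n) = \E\bigl[e^{-r\tau_n}\,G(t_n + \tau_n, X^{x_n}_{\tau_n})\bigr]$. My first step is to compare $V(t_n, x_n)$ with $V(t_0, x_0)$ via the truncation $\sigma_n := \tau_n \wedge (T - t_0)$, which is admissible at $(t_0, x_0)$ and coincides with $\tau_n$ whenever $t_n \geq t_0$. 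From $V(t_0, x_0) \geq \E\bigl[e^{-r\sigma_n} G(t_0 + \sigma_n, X^{x_0}_{\sigma_n})\bigr]$ one obtains
\[
V(t_n, x_n) - V(t_0, x_0) \leq \E\bigl[ e^{-r\tau_n} G(t_n + \tau_n, X^{x_n}_{\tau_n}) - e^{-r\sigma_n} G(t_0 + \sigma_n, X^{x_0}_{\sigma_n}) \bigr].
\]

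Next, since $\tau_n$ is uniformly bounded by $T$, a subsequence argument allows me to reduce to the case $\tau_n \to \tau_\infty$ almost surely for some $[0, T-t_0]$-valued random variable $\tau_\infty$, and correspondingly $\sigma_n \to \tau_\infty$. Path-continuity of the geometric Brownian motion $X$ in its starting point and time then yields $X^{x_n}_{\tau_n} \to X^{x_0}_{\tau_\infty}$ and $X^{x_0}_{\sigma_n} \to X^{x_0}_{\tau_\infty}$ almost surely. Since the gain function $G = (K-\cdot)^+ Z$ is continuous on $[0,T)\times(0,\infty)$ — the only singularity of $Z_x$ sits at the single point $(T, L)$, while $Z$ itself remains continuous throughout the interior — the integrand tends to zero almost surely on the event $\{\tau_\infty < T - t_0\}$.

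The main obstacle is the event $\{\tau_\infty = T - t_0\}$, where the trajectory reaches $t = T$ and encounters the limiting gain $G(T, \cdot) = (K - \cdot)^+\, I\{\cdot \geq L\}$, which is discontinuous at $x = L$; this is precisely how the singularity of $Z_x$ at $(T, L)$ flagged in the preceding remark manifests itself. To go around this, I would invoke the absolute continuity of the law of $X^{x_0}_{T-t_0}$ under geometric Brownian motion: since $\P\bigl(X^{x_0}_{T-t_0} = L\bigr) = 0$, the bad point at level $L$ is avoided almost surely, and hence $G$ is continuous at $\bigl(t_0 + \tau_\infty, X^{x_0}_{\tau_\infty}\bigr)$ almost surely. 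Together with the uniform bound $0 \leq G \leq K$, the dominated convergence theorem forces the right-hand side above to zero, producing $\limsup_n V(t_n, x_n) \leq V(t_0, x_0)$. Since the extracted subsequence was arbitrary, this yields upper semi-continuity of $V$, and combined with the l.s.c. from Lemma \ref{VILSCC5} the desired continuity follows.
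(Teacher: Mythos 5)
Your overall strategy---joint upper semi-continuity via the optimal stopping times at the approximating points, combined with the lower semi-continuity of Lemma \ref{VILSCC5}---is a legitimate alternative to the paper's route, and your treatment of the terminal-time discontinuity of $G$ at $(T,L)$ via $P\left(X^{x_0}_{T-t_0}=L\right)=0$ is sound. However, there is a genuine gap at the compactness step: from the uniform bound $\tau_n\in[0,T]$ you cannot extract a subsequence along which $\tau_n\to\tau_\infty$ \emph{almost surely}. Boundedness of a sequence of random variables gives tightness, hence subsequential convergence in distribution (or, in the stopping-time setting, convergence of a subsequence in the Baxter--Chacon topology to a \emph{randomized} stopping time), but not almost sure convergence of any subsequence: for each fixed $\omega$ the real sequence $\tau_n(\omega)$ has convergent subsequences, but the subsequence depends on $\omega$, and in general no single subsequence works for almost every $\omega$. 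Since your entire dominated convergence argument rests on this pointwise convergence (it is what delivers $X^{x_n}_{\tau_n}\to X^{x_0}_{\tau_\infty}$ and $\sigma_n\to\tau_\infty$), the proof does not close as written. Repairing it would require either working with randomized stopping times and justifying that the value is unchanged, or exploiting the specific structure of $\tau_n$ as the entry time into the closed set $\mathcal{D}$ (for instance via the monotone boundary), neither of which you do.

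For comparison, the paper avoids this issue entirely by proving continuity one variable at a time: in each case only one ($\epsilon$-)optimal stopping time is introduced, namely for the point with the larger value, and the difference of the two expectations is estimated directly, so no limit of stopping times is ever needed. The singularity of $Z_x$ at $(T,L)$ is then neutralized not by an almost-sure avoidance argument at the terminal time but by a quantitative geometric observation: when $x_2>B(t)$ the optimally stopped process satisfies $X^{x_2}_{\tau_2}>L+\delta$, and choosing $x_2-x_1$ small enough forces $X^{x_1}_{\tau_2}>L$ as well, so that $Z\equiv 1$ on both trajectories at the stopping time and the troublesome $Z$-difference term vanishes identically; the case $x_2\leq B(t)$ is handled by $\tau_2=0$. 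Your approach, once the compactness step is fixed, would yield joint continuity directly, whereas the paper obtains separate continuity in $t$ and in $x$; but as it stands the key step of your argument is unjustified.
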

\begin{proof}
See Appendix \ref{ALAPii}.
\end{proof}

By taking advantage of the above analysis, we obtain the following result:
\begin{Proposition}[The Free-boundary Problem]\label{CORRLGB}
The stopping set and the continuation set are of the forms:
\begin{align*}
&\mathcal{C}=\{ (t, x) \in [0, T)\times(0, \infty): x>b(t) \}, \\
&\mathcal{D}=\{ (t, x) \in [0, T)\times(0, \infty): x\leq b(t) \} \cup \{ (T, x): x\leq b(T) \}, 
\end{align*}
where the map $t\mapsto b(t)$ is increasing.

The \textit{free-boundary} problem is rearranged as follows:
\begin{align}
&{V}_t + \mathbb{L}_X {V} - r {V} = 0 \qquad\qquad \text{for $(t, x)\in\mathcal{C}$}, \label{FBPL1e0}\\
&{V}(t, x)=K-x \qquad\qquad\,\,\,\,\,\,\,\,\text{for $x=b(t)$, instantaneous stopping},\label{FBPL2e0}\\
&{V}_x(t, x) =-1 \qquad\qquad\,\,\,\,\,\,\,\,\,\,\,\,\,\, \text{for $x=b(t)$, smooth-fit},\label{FBPL3e0}\\
&{V}(t, x)>{G}(t, x) \qquad\qquad\,\,\,\,\,\, \text{for $(t, x)\in\mathcal{C}$},\label{FBPL4e0}\\
&{V}(t, x)={G}(t, x) \qquad\qquad\,\,\,\,\,\, \text{for $(t, x)\in\mathcal{D}$}.\label{FBPL5e0}
\end{align}

\end{Proposition}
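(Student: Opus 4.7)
The plan is to assemble the announced free-boundary problem from the connectedness results proved earlier in this subsection together with the continuity of $V$ from Lemma \ref{VICC51}.

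Define
\[ b(t) := \sup\{x>0 : (t,x)\in\mathcal{D}\}, \qquad t\in[0,T], \]
with the convention $\sup\emptyset=0$. Lemmas \ref{KBTSS} and \ref{SSINE} yield the inclusions needed for $B(t)\leq b(t)\leq K$; since we are in the case $t_*=0$, the inequality $B(0)>L$ propagates through the monotonicity of $B$ to give the strict bound $b(t)>L$ for every $t\in[0,T]$. The continuity of $V$ in Lemma \ref{VICC51} makes $\mathcal{D}$ closed, so $(t,b(t))\in\mathcal{D}$; combined with the down-connectedness of $\mathcal{D}$ this forces $\mathcal{D}\cap(\{t\}\times(0,\infty)) = (0,b(t)]$ and gives $\mathcal{C}$ the complementary form. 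The monotonicity of $t\mapsto b(t)$ then follows from the right-connectedness in Proposition \ref{SSINEiii}: if $t_1<t_2$ and $x\leq b(t_1)$ then $(t_1,x)\in\mathcal{D}$ forces $(t_2,x)\in\mathcal{D}$, whence $x\leq b(t_2)$.

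With the two sets in the claimed form, equations \eqref{FBPL4e0} and \eqref{FBPL5e0} are mere restatements of the definitions of $\mathcal{C}$ and $\mathcal{D}$. For the PDE \eqref{FBPL1e0} inside $\mathcal{C}$, I would invoke the strong Markov property together with standard interior regularity for the non-degenerate parabolic operator $\partial_t+\mathbb{L}_X-r$: on any compact subset of $\mathcal{C}$ the discounted process $s\mapsto e^{-rs}V(t+s,X_{t+s}^x)$ is a $P_{t,x}$-martingale up to the first exit, so $V\in C^{1,2}(\mathcal{C})$ and It\^o's formula produces the equation $V_t+\mathbb{L}_X V - rV = 0$ classically. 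The value-matching \eqref{FBPL2e0} follows because $b(t)>L$ implies $Z(t,b(t))=1$, so $V(t,b(t))=G(t,b(t))=K-b(t)$.

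The main obstacle is the smooth-fit condition \eqref{FBPL3e0}. Because $b(t)>L$, the gain $G$ coincides with the smooth function $x\mapsto K-x$ on a two-sided neighborhood of $b(t)$, so $G_x(t,b(t)\pm)=-1$; the left-derivative $V_x(t,b(t)-)=-1$ is then immediate from $V=G$ on $\mathcal{D}$, while the right-derivative inequality $V_x(t,b(t)+)\geq -1$ is automatic from $V\geq G$ with equality at $b(t)$. To rule out strict inequality, I would follow the classical Peskir--Shiryaev scheme: start $X$ at $b(t)+\varepsilon$, introduce the hitting time $\tau_\varepsilon=\inf\{s\geq 0 : X_{t+s}^{b(t)+\varepsilon}\leq b(t)\}\wedge \delta$, apply It\^o's formula to $V$ on $\mathcal{C}$ (using the PDE) and to $K-x$ separately on $[0,\tau_\varepsilon]$, subtract the two resulting identities, take expectations, and use the sign of $H$ from \eqref{tnnohH} to control the residual drift, obtaining $V(t,b(t)+\varepsilon)-(K-b(t)-\varepsilon)=o(\varepsilon)$ as $\varepsilon\downarrow 0$; this forces $V_x(t,b(t)+)=-1$. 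Crucially, because $b(t)>L$ and $t<T$, the entire local analysis takes place in a spatial-temporal neighborhood of $(t,b(t))$ strictly separated from the singular point $(L,T)$ of $Z_x$, so the pathological behavior of the gain function never enters the estimate.
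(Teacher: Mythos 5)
Your construction of $b$, the identification of $\mathcal{C}$ and $\mathcal{D}$ via closedness plus down/right-connectedness, the monotonicity of $b$, the bound $L<B(t)\leq b(t)\leq K$, the value-matching via $Z(t,b(t))=1$, and the PDE in $\mathcal{C}$ all match what the paper takes from the preceding lemmas (the paper leaves these steps implicit and only writes out a proof of the smooth-fit condition \eqref{FBPL3e0}). The lower bound $V_x(t,b(t)+)\geq -1$ is also argued exactly as in the paper.

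The gap is in your proof of the reverse inequality for smooth fit. What you describe is not the Peskir--Shiryaev scheme used in the paper: the paper never applies It\^o's formula to $V$. It takes $\tau_\epsilon$ to be the \emph{optimal} stopping time for $V(t,x+\epsilon)$, writes $V(t,x+\epsilon)-V(t,x)\leq E\bigl(e^{-r\tau_\epsilon}(G(t+\tau_\epsilon,X^{x+\epsilon}_{\tau_\epsilon})-G(t+\tau_\epsilon,X^{x}_{\tau_\epsilon}))\bigr)$, uses the elementary inequality \eqref{TIEQIC5SF} together with $X^{x}_{\tau_\epsilon}-X^{x+\epsilon}_{\tau_\epsilon}=-\epsilon e^{(r-\sigma^2/2)\tau_\epsilon+\sigma W_{\tau_\epsilon}}$, and concludes via $\tau_\epsilon\to0$ a.s.\ (Lemma \ref{ASCC5}) and dominated convergence; the choice $\epsilon<\tfrac{x-L}{2}$ keeps both trajectories above $L$ so that $Z\equiv1$ and the singular part of $G$ never enters. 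Your route instead stops at the \emph{level} $b(t)$ and applies It\^o to $V$ ``using the PDE'' on $[0,\tau_\varepsilon]$. That step fails twice over: since $b$ is increasing, the strip $\{(t+s,y): b(t)<y\leq b(t+s)\}$ lies in $\mathcal{D}$, so the path up to $\tau_\varepsilon$ is not contained in $\mathcal{C}$ and the PDE is not available along it; and even if you stop at the curve instead, It\^o's formula for $V$ up to $\partial\mathcal{C}$ presupposes $C^{1,2}$ regularity up to the boundary (precisely the regularity packaged later in Lemma \ref{LSPTFFLGB}, whose proof already uses smooth fit), so the argument is circular. Moreover, ``use the sign of $H$'' points the wrong way for the bound you need: in the resulting identity $V-G$ at $(t,b(t)+\varepsilon)$ equals the terminal term \emph{plus} $E\int_0^{\tau_\varepsilon}e^{-ru}(-H)\,du\geq 0$, so the sign of $H$ only reproduces $V\geq G$; to reach $o(\varepsilon)$ you would need quantitative estimates of the form $E[\tau_\varepsilon]=O(\varepsilon\sqrt{\delta})$ and a matching bound on $E\bigl((V-G)(t+\delta,X_\delta);\tau_\varepsilon=\delta\bigr)$, none of which appear in your sketch. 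As written, the smooth-fit condition --- the only substantive item in the paper's proof of this proposition --- is not established.
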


Just to complete the picture of the free-boundary problem, we now justify equation \eqref{FBPL3e0}.

\begin{proof}[Proof of the Smooth-fit Condition]
Let $(t, x)\in(0, T)\times(0, \infty)$ be a point on the boundary $b$ be fixed, i.e. $x=b(t)$. Then, $L<x<K$ and for all $\epsilon>0$ such that $L<x+\epsilon<K$, we have
\begin{align*}
\frac{V(t, x+\epsilon) - V(t, x)}{\epsilon} &\geq \frac{G(t, x+\epsilon)-G(t, x)}{\epsilon}= \frac{(K-x-\epsilon)-(K-x)}{\epsilon}=-1.
\end{align*}
and $\lim\limits_{\epsilon\to 0} \frac{G(t, x+\epsilon) - G(t, x)}{\epsilon} = -1$. 

In order to prove the converse inequality, we fix a sufficiently small $\epsilon$ so that $0 < \epsilon < \frac{x-L}{2}$ and that $L<x+\epsilon<K$, and consider the optimal stopping time $\tau_\epsilon$ for $V(t, x+\epsilon)$ so that the monotonicity of $b$ tells us that $X_{\tau_\epsilon}^{x+\epsilon}=b(t+\tau_\epsilon)>b(t)\geq L$ implies $X_{\tau_\epsilon}^x>L$ almost surely, as before, by the strong solution of GBM, 
\[e^{\left(r-\frac{\sigma^2}{2}\right)\tau_\epsilon+\sigma W_{ \tau_\epsilon}} > \frac{b(t)}{x+\epsilon} = \frac{x}{x+\epsilon},\]
so that, for $x> L$,
\begin{align*}
x e^{\left(r-\frac{\sigma^2}{2}\right)\tau_\epsilon+\sigma W_{ \tau_\epsilon}} >  \frac{x^2}{x+\epsilon} > x- \epsilon > x- \frac{x-L}{2}= \frac{x+L}{2} > L.
\end{align*}

Then, we have
\begin{align*}
&\frac{V(t, x+\epsilon) - V(t, x)}{\epsilon} \leq \frac{1}{\epsilon} \mathbb{E} \left[ e^{-r\tau_\epsilon} G(t+\tau_\epsilon, X_{\tau_\epsilon}^{x+\epsilon}) \right] - \mathbb{E} \left[ e^{-r\tau_\epsilon} G(t+\tau_\epsilon, X_{\tau_\epsilon}^{x}) \right] \\
&\qquad= \frac{1}{\epsilon}\mathbb{E} \left[ e^{-r\tau_\epsilon} \left( \left( K-X_{\tau_\epsilon}^{x+\epsilon} \right)^+ -  \left( K-X_{\tau_\epsilon}^{x} \right)^+  \right) Z(t+\tau_\epsilon, X_{\tau_\epsilon}^{x+\epsilon}) \right. \\
&\qquad\qquad\,\,\, + \left. e^{-r\tau_\epsilon} \left( K-X_{\tau_\epsilon}^{x}\right)^+ \left( Z(t+\tau_\epsilon, X_{\tau_\epsilon}^{x+\epsilon})-Z(t+\tau_\epsilon, X_{\tau_\epsilon}^{x})  \right)  \right]\\
&\qquad\leq \frac{1}{\epsilon} \mathbb{E}\left[ e^{-r\tau_\epsilon}  \left( X_{\tau_\epsilon}^{x} - X_{\tau_\epsilon}^{x+\epsilon} \right) Z(t+\tau_\epsilon, X_{\tau_\epsilon}^{x+\epsilon}) I\{ X_{\tau_\epsilon}^{x+\epsilon} < K \} \right.\\
&\qquad\qquad\,\,\, + \left. e^{-r\tau_\epsilon}  \left( K-X_{\tau_\epsilon}^{x}\right)^+ \left( Z(t+\tau_\epsilon, X_{\tau_\epsilon}^{x+\epsilon})-Z(t+\tau_\epsilon, X_{\tau_\epsilon}^{x})  \right)   \right]\\
&\qquad=- \mathbb{E}\left[ e^{-\frac{\sigma^2 \tau_\epsilon}{2} + \sigma W_{\tau_\epsilon}} Z(t+\tau_\epsilon, X_{\tau_\epsilon}^{x+\epsilon}) I\{ X_{\tau_\epsilon}^{x+\epsilon} < K  \}\right] \\
&\qquad\,\,\,\,\,\,\,+ \frac{1}{\epsilon} \mathbb{E} \left[ e^{-r\tau_\epsilon}  \left( K-X_{\tau_\epsilon}^{x}\right)^+ \left( Z(t+\tau_\epsilon, X_{\tau_\epsilon}^{x+\epsilon})-Z(t+\tau_\epsilon, X_{\tau_\epsilon}^{x})  \right) \right]\\
&\qquad\leq - \mathbb{E}\left[ e^{-\frac{\sigma^2 \tau_\epsilon}{2} + \sigma W_{\tau_\epsilon}} Z(t+\tau_\epsilon, X_{\tau_\epsilon}^{x+\epsilon}) I\{ X_{\tau_\epsilon}^{x+\epsilon} < K  \}\right] \\
&\qquad\,\,\,\,\,\,\,+ \frac{1}{\epsilon} \mathbb{E} \left[ \left( K-X_{\tau_\epsilon}^{x}\right)^+ \left( Z(t+\tau_\epsilon, X_{\tau_\epsilon}^{x+\epsilon})-Z(t+\tau_\epsilon, X_{\tau_\epsilon}^{x})  \right) \right]\\
&\qquad\leq - \mathbb{E} \left[ e^{-\frac{\sigma^2 \tau_\epsilon}{2} + \sigma W_{\tau_\epsilon}} Z(t+\tau_\epsilon, X_{\tau_\epsilon}^{x+\epsilon}) I\{ X_{\tau_\epsilon}^{x+\epsilon} < K  \}\right] \\
&\qquad\,\,\,\,\,\,\,+ \frac{K}{\epsilon} \mathbb{E} \left[  I\{X_{\tau_\epsilon}^x < L\} \left( Z(t+\tau_\epsilon, X_{\tau_\epsilon}^{x+\epsilon})-Z(t+\tau_\epsilon, X_{\tau_\epsilon}^{x})  \right) \right] \\
&\qquad= - \mathbb{E} \left[ e^{-\frac{\sigma^2 \tau_\epsilon}{2} + \sigma W_{\tau_\epsilon}} Z(t+\tau_\epsilon, X_{\tau_\epsilon}^{x+\epsilon}) I\{ X_{\tau_\epsilon}^{x+\epsilon} < K  \}\right],
\end{align*}
where the second inequality follows from
\begin{align}
\left( K-X_{\tau_\epsilon}^{x+\epsilon} \right)^+ -  \left( K-X_{\tau_\epsilon}^{x} \right)^+ &=\left( X_{\tau_\epsilon}^{x} - X_{\tau_\epsilon}^{x+\epsilon} \right)  I\{ X_{\tau_\epsilon}^{x+\epsilon} <K \}  -  \left( K-X_{\tau_\epsilon}^{x} \right)^+ I\{ X_{\tau_\epsilon}^{x+\epsilon} \geq K \} \nonumber\\
&\leq \left( X_{\tau_\epsilon}^{x} - X_{\tau_\epsilon}^{x+\epsilon} \right)  I\{ X_{\tau_\epsilon}^{x+\epsilon} <K \}, \label{TIEQIC5SF}
\end{align}
and the second equality is from the strong solution of $X$ and linearity; moreover, the fourth inequality is due to the fact that $I\{ X_{\tau_\epsilon}^x \geq L \} = 1$ entails $I\{ X_{\tau_\epsilon}^{x+\epsilon} \geq L \} = 1$ and that $Z\left(t+\epsilon, X_{\tau_\epsilon}^x\right) = Z\left(t+\epsilon, X_{\tau_\epsilon}^{x+\epsilon}\right) = 1$, while we used $I\{X_{\tau_\epsilon}^x < L\}=0$ for chosen $\epsilon$ in the last step. Via an appeal to the dominated convergence theorem, upon letting $\epsilon\to 0$, we have $\tau_\epsilon\to0$ (see \cite[Page 382]{PeskirandShiryaev} and \cite[Page 125]{ZhuoshuWu2023} for its justification) so that 
\begin{align*}
\lim_{\epsilon\to0}\mathbb{E}\left[ e^{-\frac{\sigma^2 \tau_\epsilon}{2} + \sigma W_{\tau_\epsilon}} Z(t+\tau_\epsilon, X_{\tau_\epsilon}^{x+\epsilon}) I\{ X_{\tau_\epsilon}^{x+\epsilon} < K  \}\right] = 1.
\end{align*}
Thus, by the squeeze theorem, $\frac{\partial^+V}{\partial x}(t, x) = -1$.The conclusion then follows via the fact that for $(t, x)$ and $(t, x-\epsilon)$ belong to $\mathcal{D}$,
\[\frac{V(t, x) - V(t, x-\epsilon)}{\epsilon}= \frac{G(t, x) - G(t, x-\epsilon)}{\epsilon}= -1,\] 
implying that  $\frac{\partial^-V}{\partial x}(t, x) = -1$.
\end{proof}

 \begin{rem}
The noteworthy feature of the value function is that it is not smooth in the stopping set $\mathcal{D}$, in particular at level $L$. This fact implies the presence of a local time term in the contract pricing formula, see \cite[Page 7]{DetempleandKitapbayev2018} and \cite{Qiu2016} for similar results.
 \end{rem}

To prepare for the main theorem presented in Section \ref{TOSPRC5}, we further justify the continuity of the optimal stopping boundary.

\begin{Lemma}\label{BICOLLB}
The optimal stopping boundary $b$ is continuous on $[0, T]$ and $b(T-)=K$.
\end{Lemma}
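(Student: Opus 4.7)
The lemma splits into two claims, continuity of $b$ on $[0,T]$ and the terminal identity $b(T-)=K$. Since Proposition \ref{CORRLGB} has already established monotonicity, the one-sided limits $b(t\pm)$ exist at every $t$, so continuity amounts to ruling out jumps. My plan is to dispatch right-continuity and the terminal value by short comparison arguments and to concentrate the real work on left-continuity, which I would prove by the Peskir--Shiryaev PDE-against-test-function contradiction.

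Right-continuity and the terminal value come out almost for free. By Lemma \ref{VICC51} and continuity of $G$, the set $\mathcal D\cap([0,T)\times(0,\infty))=\{V=G\}$ is closed, so for $t_n\downarrow t$ the points $(t_n,b(t_n))\in\mathcal D$ accumulate at $(t,b(t+))\in\mathcal D$, giving $b(t+)\le b(t)$ and hence, by monotonicity, $b(t+)=b(t)$. For the terminal value, Lemma \ref{KBTSS} yields $b(t)\le K$ throughout, while Lemmas \ref{SSINE}--\ref{SSINEi} in the present case $t_*=0$ give $\mathcal D^A\subseteq\mathcal D$, i.e.\ $b(t)\ge B(t)$; combined with the classical identity $B(T-)=K$, this squeezes $b(T-)=K$.

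For left-continuity I assume $b(t_0-)<b(t_0)$ at some $t_0\in(0,T]$ and seek a contradiction. The bound $b(t)\ge B(t)\ge B(0)>L$ puts the jump strictly above $L$, so I can pick $L<b(t_0-)<x_1<x_2<b(t_0)$ together with a nonnegative bump $\phi\in C_c^\infty((x_1,x_2))$, $\phi\not\equiv 0$. Monotonicity of $b$ yields $\delta>0$ such that $[t_0-\delta,t_0)\times[x_1,x_2]\subset\mathcal C$, and on this strip \eqref{FBPL1e0} applies. Setting $J(s):=\int_{x_1}^{x_2}\phi(x)V(s,x)\,dx$, substituting $V_s=rV-rxV_x-\tfrac12\sigma^2x^2V_{xx}$, and integrating by parts (with no boundary terms, since $\phi$ has compact support) gives $J'(s)=\int_{x_1}^{x_2}\Psi(x)V(s,x)\,dx$ where $\Psi$ is a bounded smooth function of $x$ alone. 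Since $V$ is bounded by $K$ and $V(t_0,x)=K-x$ on $[x_1,x_2]$, dominated convergence combined with continuity of $V$ yields
\[
\lim_{s\uparrow t_0}J'(s)=\int_{x_1}^{x_2}\Psi(x)(K-x)\,dx=rK\int_{x_1}^{x_2}\phi(x)\,dx>0,
\]
where the final equality is obtained by reversing the integration by parts against the smooth function $K-x$ and using $-rx(K-x)_x-\tfrac12\sigma^2x^2(K-x)_{xx}+r(K-x)=rK$. But $s\mapsto V(s,x)$ is non-increasing by Corollary \ref{MAPOZwrttax} together with the supremum representation, so $V_s\le 0$ and $J'(s)\le 0$ on $[t_0-\delta,t_0)$, which contradicts the strict positivity of the limit.

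The only delicate step is this left-continuity argument. The singularity of $Z_x$ at $(L,T)$ makes it unattractive to take pointwise limits of $V_x$ or $V_{xx}$ near the boundary, which is exactly why the test-function formulation is the right vehicle: all derivatives are moved onto $\phi$ and only a bounded, continuous $V$ appears inside the integral. The assumption $t_*=0$ is essential precisely because it guarantees $b>L$ throughout, so the bump $\phi$ can be placed strictly above $L$ and $G$ coincides with the smooth affine function $K-x$ on the entire support of $\phi$, eliminating any $Z$-factor complications.
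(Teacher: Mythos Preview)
Your proof is correct, but both the left-continuity step and the terminal-value step are handled differently from the paper.

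For left-continuity, the paper uses the smooth-fit condition directly: with $y_*\in(b(t-),b(t))$ it writes
\[
V(s,y_*)-G(s,y_*)=\int_{b(s)}^{y_*}\!\int_{b(s)}^{x}(V_{xx}-G_{xx})(s,z)\,dz\,dx,
\]
and then bounds $V_{xx}\ge \tfrac{2r}{\sigma^2 x^2}V>\tfrac{2r}{\sigma^2 x^2}G\ge c>0$ via the PDE and the monotonicity of $V$ in $t$ and $x$, while $G_{xx}=0$ above $L$; letting $s\uparrow t$ gives $V(t,y_*)-G(t,y_*)>0$, a contradiction. Your test-function route instead moves all derivatives onto $\phi$, uses only continuity of $V$ and $V_t\le 0$, and identifies the limit of $J'$ with $rK\int\phi$. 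This is essentially the weak-formulation argument of De Angelis, which the paper itself adopts later for the harder case $t_*>0$ precisely because it does not rely on smooth fit. So your approach is more robust (it would survive a breakdown of \eqref{FBPL3e0}), while the paper's is shorter once smooth fit is already in hand.

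For the terminal value, the paper assumes $b(T-)<K$ and reruns the double-integral argument to reach a contradiction. Your squeeze $B(t)\le b(t)\le K$ combined with the classical $B(T-)=K$ is cleaner and exploits the comparison $\mathcal D^A\subset\mathcal D$ you already have from Lemma \ref{SSINEi} with $t_*=0$; it sidesteps any analysis of $V$ near $t=T$ altogether.
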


\begin{proof}
Although function $G$ is not differentiable at $L$, the fact that $b(t)>L$ once again comes to our rescue and hence the same argument as that in \cite[Page 759-760]{PeskirandShiryaev} can be applied. We also point out that because $\{(T, x)\in[0, \infty)\times\mathbb{R}\}$ is in the stopping set $\mathcal{D}$, $b(T)$ can be chosen freely and for the sake of continuity, we let $b(T-)=b(T)=K$. 
\end{proof}

\subsubsection{In the Case of $B(0)\leq L$.} \label{TCASSC5B0LL}

\begin{Proposition}\label{PLB01}
On the state space $[0, T]\times[L, \infty)$, the stopping set $\mathcal{D}$ is right-connected and the continuation set $\mathcal{C}$ is left-connected.
\end{Proposition}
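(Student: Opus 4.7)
The plan is to exploit a simplification that is specific to the strip $x\geq L$: there, the Azéma supermartingale satisfies $Z(t,x)=1$, so the gain function reduces to $G(t,x)=(K-x)^+$, which is \emph{independent} of $t$. This observation is essentially already contained in the proof of Proposition \ref{PSPML}, where it is noted that for $\log(L/X_t)<0$ the event $\{\bar h_L\leq T-t\}$ happens almost surely; combined with the truncation by $\wedge\,1$ in the definition of $Z$, this yields $Z(t,x)\equiv 1$ on $[0,T]\times[L,\infty)$. I would record this as a preliminary observation.

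Next I would establish a \emph{global} monotonicity in time of the value function: for $0\leq t_1\leq t_2\leq T$ and any $x>0$,
\[
V(t_1,x)\;\geq\;V(t_2,x).
\]
Two ingredients suffice. First, $[0,T-t_2]\subseteq[0,T-t_1]$, so every stopping time admissible for $V(t_2,x)$ is admissible for $V(t_1,x)$. Second, by the time-homogeneity of $X$ and Corollary \ref{MAPOZwrttax} (the map $t\mapsto Z(t,y)$ is decreasing), we have $G(t_1+\tau,y)\geq G(t_2+\tau,y)$ for every $y$. Choosing $\tau_2$ optimal for $V(t_2,x)$ and using it as a candidate in $V(t_1,x)$ then gives
\[
V(t_1,x)\;\geq\;E\bigl(e^{-r\tau_2}G(t_1+\tau_2,X^x_{\tau_2})\bigr)\;\geq\;E\bigl(e^{-r\tau_2}G(t_2+\tau_2,X^x_{\tau_2})\bigr)\;=\;V(t_2,x).
\]

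With these two facts in hand, the conclusion is immediate. Pick $(t_1,x)\in\mathcal D$ with $x\geq L$ and let $t_2>t_1$. Then $V(t_1,x)=G(t_1,x)=(K-x)^+$, and by the monotonicity just proved, $V(t_2,x)\leq (K-x)^+$. Combined with the trivial bound $V(t_2,x)\geq G(t_2,x)=(K-x)^+$, this forces $V(t_2,x)=G(t_2,x)$, i.e.\ $(t_2,x)\in\mathcal D$. This is the right-connectedness of $\mathcal D$ on $[0,T]\times[L,\infty)$, and the left-connectedness of $\mathcal C$ on the same strip follows by contraposition.

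I do not anticipate a genuine obstacle here, because restricting the state to $x\geq L$ sidesteps both the singularity of $Z_x$ at $(L,T)$ and the $t$-dependence of $G$ that made the analogous statement in the $t_*=0$ case (Proposition \ref{SSINEiii}) rely on the delicate control $\tau_2\leq\tau_L$ via Remark \ref{SSINEii}. The only point requiring a brief justification is the identity $Z\equiv 1$ on $\{x\geq L\}$, which is a direct consequence of the computation already performed in Proposition \ref{PSPML}.
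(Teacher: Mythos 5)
Your proposal is correct and follows essentially the same route as the paper: both arguments rest on the observation that $Z\equiv 1$ (hence $G(t,x)=(K-x)^+$ is time-independent) on the strip $x\geq L$, together with the fact that $t\mapsto V(t,x)$ is decreasing, which forces $t\mapsto V(t,x)-G(t,x)$ to be decreasing there. The only difference is that you spell out the proof of the time-monotonicity of $V$ (via admissibility of $\tau_2$ for the earlier start time and Corollary \ref{MAPOZwrttax}), which the paper simply asserts.
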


\begin{proof}
Since, on the state space $[0, T]\times[L, \infty)$, the gain function $G(t, x)=(K-x)^+$ by equation \eqref{XtgeqL}, the map $t\mapsto V(t, x)$ is decreasing and consequently the map $t\mapsto V(t, x)-G(t, x)$ is also decreasing. That is, for any $t_1, t_2\in[0, T]$, if $t_1<t_2$, then
\begin{align*}
V(t_1, x)-G(t_1, x) \geq V(t_2, x)-G(t_2, x),
\end{align*}
such that $(t_1, x)\in\mathcal{D}$ implies $(t_2, x)\in\mathcal{D}$ and $(t_2, x)\in\mathcal{C}$ implies $(t_1, x)\in\mathcal{C}$, which is precisely the claim.
\end{proof}

To fill the last slot of the diagram, we shall show that the stopping set $\mathcal{D}$ is not empty on state space $[0, t_*]\times(0, L]$, but first we present some important facts:
\begin{Lemma}\label{TMOGIX} 
(i) The gain function $G$ is uniformly continuous on the state space $[0, t_*]\times(0, L]$;

(ii) The map $t\mapsto\max\limits_{x\in(0, L]} G(t, x)$ is continuous on $[0, t_*]$.

(iii) Given that for each $t\in[0, t_*]$, $\max\limits_{x\in(0, L]} G(t, x)$ is reached at a unique point $x^*(t)$, in particular,
\begin{align}
\max\limits_{x\in(0, L]}G(t, x)=G(t, x^*(t)),
\end{align}
the map $t\mapsto x^*(t)$ is continuous on $[0, t_*]$.
\end{Lemma}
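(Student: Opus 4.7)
\textbf{Proof proposal for Lemma \ref{TMOGIX}.}

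The plan is to leverage a continuous extension of $G$ to a compact set, apply the Heine–Cantor theorem for (i), deduce (ii) from (i) by the standard ``max of uniformly equicontinuous family'' argument, and finally obtain (iii) through a Berge-type subsequential argument that uses uniqueness of the maximiser.

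First I would extend $G$ continuously from $[0,t_*]\times(0,L]$ to the compact rectangle $[0,t_*]\times[0,L]$. On this region, since $K>L$, we have $(K-x)^+=K-x$, so $G(t,x)=(K-x)Z(t,x)$. The function $Z(t,x)$ is continuous on $[0,T]\times(0,\infty)$ away from its singular point, and crucially on the strip $t\le t_*<T$ one has $T-t\ge T-t_*>0$, so the arguments of $\Phi$ in the definition of $Z$ stay bounded away from the critical ratio. Because $\alpha<0$, the factor $(L/x)^{\alpha}\to 0$ as $x\to 0^+$, and the first $\Phi$-term tends to $\Phi(-\infty)=0$ as well, so $Z(t,x)\to 0$ uniformly in $t\in[0,t_*]$ as $x\to 0^+$. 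Hence the extension $G(t,0):=0$ is continuous on $[0,t_*]\times[0,L]$, and by Heine–Cantor the extended function is uniformly continuous there. Restricting back gives (i).

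For (ii), uniform continuity supplies, for every $\varepsilon>0$, a $\delta>0$ such that $|G(t_1,x)-G(t_2,x)|<\varepsilon$ whenever $|t_1-t_2|<\delta$, uniformly in $x$. Taking supremum in $x\in(0,L]$ on both sides and using
\[
\Bigl|\sup_{x\in(0,L]}G(t_1,x)-\sup_{x\in(0,L]}G(t_2,x)\Bigr|\le \sup_{x\in(0,L]}|G(t_1,x)-G(t_2,x)|,
\]
yields the continuity of $t\mapsto\max_{x\in(0,L]}G(t,x)$. Note also that, since $G(t,0)=0<G(t,L)=K-L$, the supremum is attained on $(0,L]$ and coincides with the maximum over the compact set $[0,L]$, so the ``max'' in the statement is well-defined.

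For (iii), I would use a subsequence argument. Fix $t\in[0,t_*]$ and let $t_n\to t$ in $[0,t_*]$. Setting $x_n:=x^*(t_n)\in(0,L]$, compactness of $[0,L]$ gives a subsequence $x_{n_k}\to \bar x\in[0,L]$. Combining uniform continuity from (i) with the defining inequality $G(t_{n_k},x_{n_k})\ge G(t_{n_k},x)$ for every $x\in(0,L]$, and passing to the limit, yields $G(t,\bar x)\ge G(t,x)$ for all $x\in(0,L]$. Since $G(t,0)=0$ while $G(t,L)>0$, we have $\bar x\ne 0$, i.e.\ $\bar x\in(0,L]$, and by the assumed uniqueness of the maximiser $\bar x=x^*(t)$. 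As every convergent subsequence of $(x_n)$ has the same limit $x^*(t)$ and $(x_n)$ lies in the compact set $[0,L]$, the whole sequence converges to $x^*(t)$, proving continuity.

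The main obstacle is (iii): the uniqueness hypothesis is doing the essential work, because without it one would only obtain upper hemicontinuity of the argmax correspondence. Beyond that, the only subtlety is making sure $\bar x$ cannot escape to the boundary point $0$ where $G$ vanishes, which is why I flag the strict positivity of $\max_x G(t,\cdot)$ early.
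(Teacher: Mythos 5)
Your proposal is correct, and it reaches the conclusion by a genuinely different route in parts (i) and (iii). For (i), the paper does not compactify: it bounds $|G(t_1,x_1)-G(t_2,x_2)|$ directly by $|x_2-x_1|Z(t_1,x_1)+K|Z(t_1,x_1)-Z(t_2,x_2)|$ and then invokes the mean value theorem together with the boundedness of $Z_t$ and $Z_x$ on $[0,t_*]\times(0,L]$ (established in the appendix, where the potentially delicate behaviour of $Z_x$ as $x\to0^+$ is handled by L'H\^opital), yielding an explicit Lipschitz modulus $KM|t_1-t_2|+(KN+1)|x_1-x_2|$. Your continuous extension to $[0,t_*]\times[0,L]$ with $G(t,0):=0$ followed by Heine--Cantor is softer but equally valid — the uniformity in $t$ of the limit $Z(t,x)\to0$ as $x\to0^+$ holds precisely because $T-t\geq T-t_*>0$ keeps the $\Phi$-arguments under control, as you note — at the cost of giving only a qualitative modulus rather than the paper's quantitative one. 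Part (ii) is identical in both treatments. For (iii), your compactness/subsequence argument is actually tighter than the paper's: the paper argues by contradiction and writes $\lim_{t_n\to t}\max_x G(t_n,x)=G(t,\lim_{t_n\to t}x^*(t_n))$, which tacitly presupposes that the limit of $x^*(t_n)$ exists, whereas you extract a convergent subsequence, pass to the limit in the defining inequality $G(t_{n_k},x_{n_k})\geq G(t_{n_k},x)$, exclude the boundary point $0$ via $G(t,0)=0<K-L=G(t,L)$, identify the limit with $x^*(t)$ by uniqueness, and then upgrade to convergence of the full sequence. This is the standard Berge-type argument and closes the small logical gap in the paper's version; it also makes explicit, as you observe, that the uniqueness hypothesis is exactly what prevents the argmax correspondence from being merely upper hemicontinuous.
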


\begin{rem}\label{TROX}
Knowing from derivative computation that for each fixed $t$, the maximum of $G$ attained on $(0, L]$ is also the global maximum over the domain $(0, \infty)$, we can write 
\[\max\limits_{x\in(0, \infty)}G(t, x)=\max\limits_{x\in(0, L]}G(t, x).\]
As a matter of fact, $t\mapsto x^*(t)$ is non-decreasing, see Figure \ref{themapofxstar}.
\end{rem}

\begin{proof}[Proof of Lemma \ref{TMOGIX} ]
See Appendix \ref{ALAPiii}.
\end{proof}

What Lemma \ref{TMOGIX} tells us is that:
\begin{Lemma} \label{TMOGIXIND}
All points $(t, x)\in[0, t_*]\times (0, x^*(t)]$ belong to the stopping set $\mathcal{D}$.
\end{Lemma}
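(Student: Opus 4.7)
The plan is to establish $V(t,x) \le G(t,x)$ on the set $[0, t_*] \times (0, x^*(t)]$, which, together with the trivial inequality $V \ge G$ obtained by taking $\tau = 0$ in \eqref{OSPFLR}, will force $V = G$ and hence membership in $\mathcal{D}$.

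The first step is the boundary case $x = x^*(t)$. By Remark \ref{TROX}, $x^*(s)$ is the \emph{global} argmax of $G(s,\cdot)$ for each $s \in [0, t_*]$, so for every stopping time $\tau \in [0, T-t]$ we have the pathwise bound $G(t+\tau, X_{t+\tau}) \le G(t+\tau, x^*(t+\tau))$. Moreover $s \mapsto G(s, x^*(s))$ is non-increasing: for $s_1 \le s_2$, Corollary \ref{MAPOZwrttax}(i) gives $G(s_1, y) \ge G(s_2, y)$ for every $y$, hence $G(s_1, x^*(s_1)) = \max_y G(s_1, y) \ge \max_y G(s_2, y) = G(s_2, x^*(s_2))$. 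Chaining these inequalities yields $G(t+\tau, X_{t+\tau}) \le G(t, x^*(t))$ almost surely; taking expectation and supremum over $\tau$ produces $V(t, x^*(t)) \le G(t, x^*(t))$, so $(t, x^*(t)) \in \mathcal{D}$.

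For the remaining points $x \in (0, x^*(t))$, the naive ``global max'' bound only yields $V(t,x) \le G(t, x^*(t))$, which is generally strictly larger than $G(t, x)$. To close the gap I would appeal to the local time--space decomposition \eqref{GKBTC}: after taking expectation, three of the four correction terms are non-positive, so the only obstruction to $E_{t,x}[e^{-r\tau} G(t+\tau, X_{t+\tau})] \le G(t,x)$ is the local time at $K$. Since $x \le x^*(t) \le L < K$, any path that accumulates local time at $K$ must first cross $L$ upward, and such a crossing simultaneously generates the strictly negative $L$-local-time contribution $-\tfrac12 (K-L) Z_x(t+u, L-)\,dl_u^L$. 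I plan to localise at $\sigma_K = \inf\{u : X_{t+u} = K\}$: on $\{\tau \le \sigma_K\}$ the $K$-local time vanishes and \eqref{GKBTC} gives the desired bound immediately; on $\{\tau > \sigma_K\}$, $G(t+\sigma_K, K) = 0$ and the strong Markov property reduces the remaining contribution to a starting point $(t+\sigma_K, K)$.

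The technical crux will be the post-$\sigma_K$ analysis: once $X$ has reached $K$, it sits in a region where $G$ vanishes but $V$ remains strictly positive by Lemma \ref{KBTSS}, so the naive bound $V \ge G$ is far too weak to absorb the continuation value. The resolution I anticipate is to construct a superharmonic majorant of $G$ on the whole state space $[0, T]\times(0,\infty)$, in the spirit of the infinite-horizon function $\hat V$ built in Section \ref{IFTHC5}, which coincides with $G$ on $[0, t_*]\times (0, x^*(t)]$; by the optional stopping theorem such a majorant dominates $V$, giving $V = G$ on the desired set. Continuity of $G$ and of $x^*(t)$ established in Lemma \ref{TMOGIX} will be essential for matching the majorant smoothly across the curve $t \mapsto x^*(t)$.
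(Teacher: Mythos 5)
Your first step (the points on the curve itself) is exactly the paper's argument: the global-maximum property of $x^*$ from Remark \ref{TROX} together with the monotonicity of $t\mapsto G(t,x)$ gives the pathwise bound $G(t+\tau,X_{t+\tau})\leq G(t,x^*(t))$, hence $V(t,x^*(t))\leq G(t,x^*(t))$ and $(t,x^*(t))\in\mathcal{D}$. That part is correct and complete.

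For the points strictly below the curve, however, your proposal has a genuine gap. You correctly see that the only positive contribution in \eqref{GKBTC} is the local time at $K$, and you localise at $\sigma_K$, but you then leave the case $\{\tau>\sigma_K\}$ unresolved: the ``superharmonic majorant of $G$ on the whole state space coinciding with $G$ on $[0,t_*]\times(0,x^*(t)]$'' is never constructed, and constructing it would amount to re-deriving the value function itself, so this is a placeholder rather than a proof. The step you are missing is to \emph{use what you just proved}: since the curve $\{(s,x^*(s)):s\in[0,t_*]\}$ lies in $\mathcal{D}$ (your step 1) and $[t_*,T]\times(0,B(s)]\subseteq\mathcal{D}$ (Lemma \ref{SSINEi}), the optimal stopping time $\tau_{\mathcal{D}}$ started from $(t,x)$ with $x\leq x^*(t)$ is dominated by the first exit time from the region below the curve, so the process never exceeds $x^*(\cdot)\leq L<K$ before $\tau_{\mathcal{D}}$. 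Consequently the $K$-local-time term in \eqref{GKBTC} never activates, the remaining terms ($H\leq 0$ for $x<L$ and the $L$-local-time term) are non-positive, and $V(t,x)=E_{t,x}\bigl(e^{-r\tau_{\mathcal{D}}}G(t+\tau_{\mathcal{D}},X_{t+\tau_{\mathcal{D}}})\bigr)\leq G(t,x)$ directly. This is the ``analogous argument as Lemma \ref{SSINEi}'' the paper invokes; there is no need to analyse paths after they reach $K$, because under the optimal rule they never get there.
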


\begin{proof}
According to the definition of $x^*$, Remark \ref{TROX} and the fact that $t\mapsto G(t, x)$ is decreasing,
\begin{align*}
G(t, x^*(t))\geq \max_{x\in(0, \infty)} G(t+\tau, x ) \geq G(t+\tau, x),
\end{align*}
and by domination, taking expectation with respect to the measure $P_{t, x^*(t)}$,
\begin{align*}
G(t, x^*(t))\geq \mathbb{E}_{t, x^*(t)} \left[ e^{-r\tau} G(t+\tau, X_{t+\tau} ) \right],
\end{align*}
so that $G(t, x^*(t))\geq V(t, x^*(t))$, but $V\geq G$ forces the equation $G(t, x^*(t)) = V(t, x^*(t))$, and thus showing that for all $t\in[0, t_*]$, $\{(t, x^*(t))\}\in\mathcal{D}$ (that is, the curve $t\mapsto x^*(t)$ lies entirely in $\mathcal{D}$ for $t\in[0, t_*]$). Furthermore, due to Lemma \ref{TMOGIX} and Remark \ref{TROX}, an analogous argument as Lemma \ref{SSINEi} does the rest for us.
\end{proof}

\begin{figure}[h!]
\centering
    \subfloat[\centering]{{\includegraphics[width=7cm]{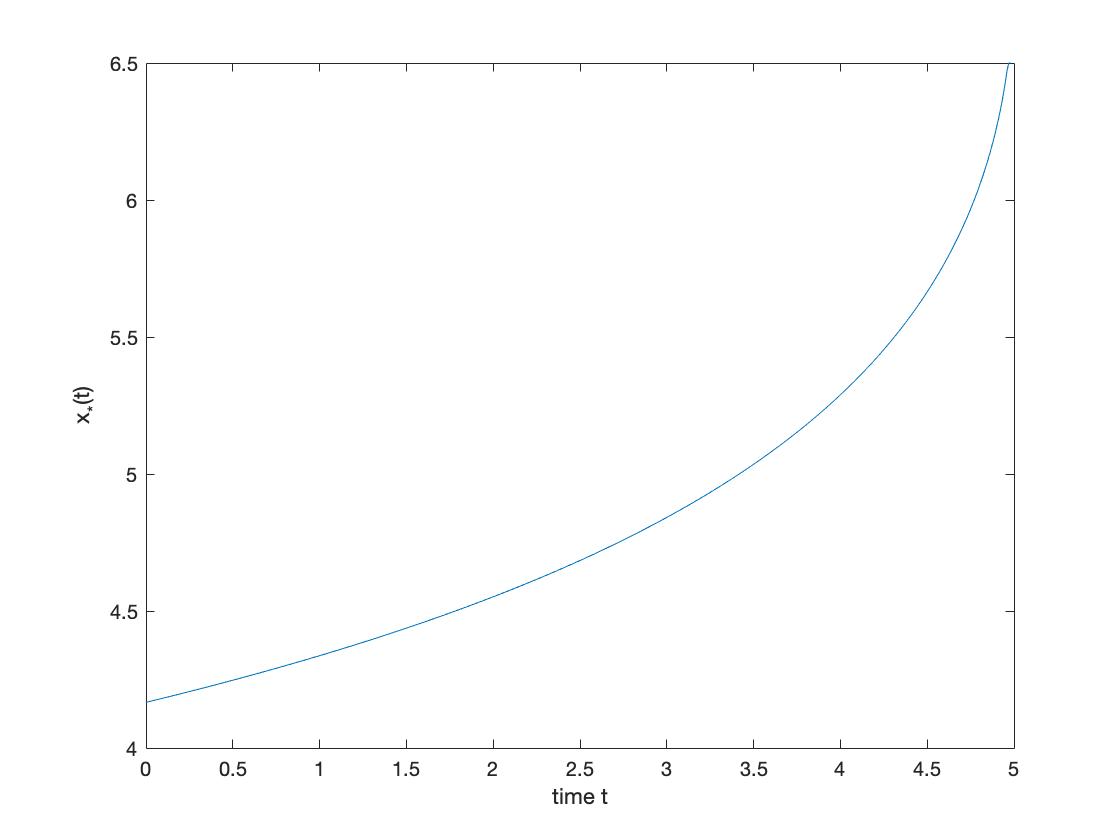} \label{themapofxstar}}}%
    \qquad
    \subfloat[\centering]{{\includegraphics[width=7cm]{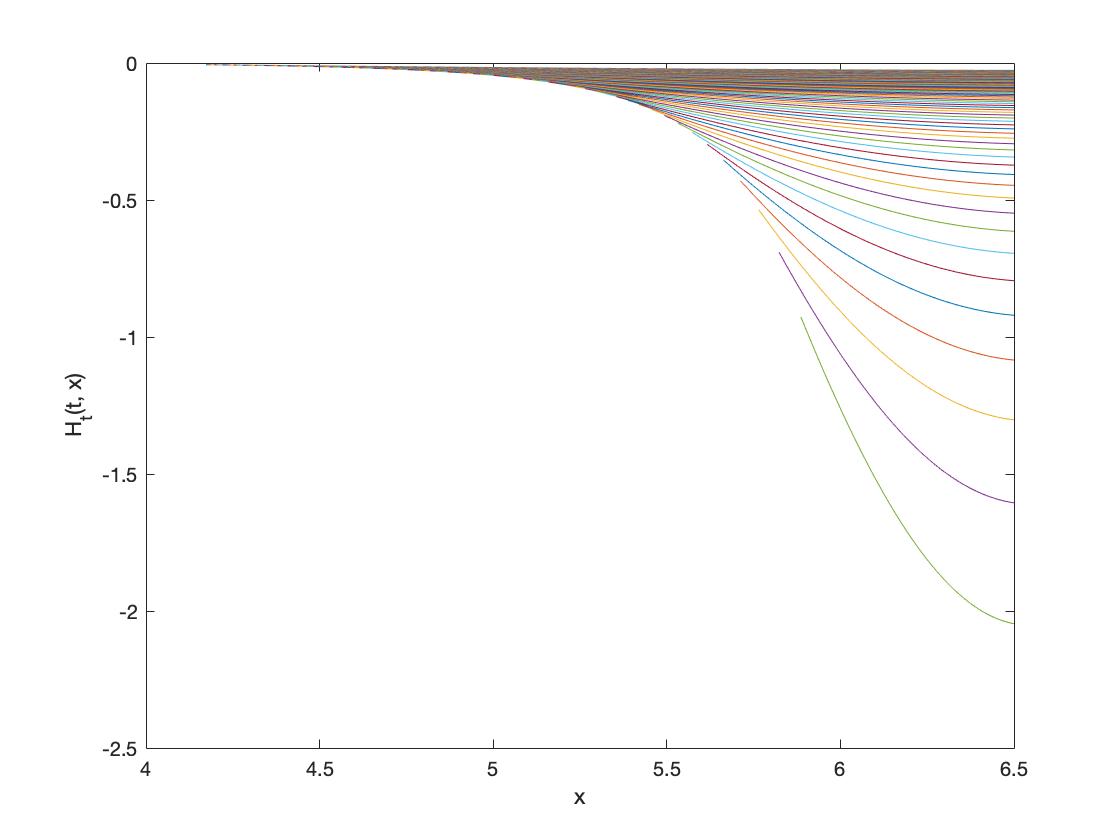} \label{themapofxstar2} }}%
    \caption{(a) This figure displays the maps $t\mapsto x^*(t)$ with chosen parameters $r=0.05$, $\sigma=0.4$, $L=6.5$, $K=7$, $T=5$, $t_*\approx 4.98$; (b) This figure displays the maps $x\mapsto H_t(t, x)$ in the state space $[0, t_*]\times[x^*(t), L]$ with chosen parameters $r=0.05$, $\sigma=0.4$, $L=6.5$, $K=7$, $T=5$. Different-coloured lines indicate different $t$ (and thus different $x^*(t)$).}%
    \end{figure}

\begin{figure}[h!]
\centering
\begin{tikzpicture}
\draw[name path = D, ->, ultra thick] node[left]{$0$}(0, 0)--(5, 0) node[right]{$T$};
\draw[->, ultra thick] (0, 0)--(0, 5) node[right]{$x$};
\draw[name path = A , thick](0, 1.95).. node[below]{$B(t)$} controls (1, 1.96) and (3.5, 2.3)..(5, 4.9);
\draw[dotted] (0, 4.9)--(5, 4.9) node[right] {\tiny{$K$}} ;
\draw[name path = B](0, 3.5)--(5, 3.5) node[right] {$L$};
\draw[name path = C, dotted] (5, 0)--(5, 4.9);
\draw[name path =E, dotted] (3.9,0) node[below]{$t_*$}--(3.9, 3.5);
\draw[thick, orange, dotted] (0, 3.8)--(5, 3.8) node[right]{\tiny{$L+\delta$}};
\draw[thick, orange, dotted](4.2, 0) node[below]{\tiny{$t_\delta$}}--(4.2, 3.8);
\draw[name path = F, blue, dashed](0, 2.18) node[above]{\textcolor{black}{$x^*(t)$}}..controls (1, 2.2) and (3.5, 2.5)..(3.9, 3);
\draw[name path=G, dashed](0, 0)--(3.9, 0);
\fill[green, opacity=0.5, intersection segments={of= A and B, sequence={L2--R2[reverse]}}];
\fill[green, opacity=0.5] (3.9, 0) rectangle (5, 3.5);
\tikzfillbetween[of=F and G]{green, opacity=0.5};
\end{tikzpicture}
\begin{tikzpicture}
\draw[name path = D, ->, ultra thick] node[left]{$0$}(0, 0)--(5, 0) node[right]{$T$};
\draw[->, ultra thick] (0, 0)--(0, 5) node[right]{$x$};
\draw[name path = A , thick](0, 1.95)..controls (1, 1.96) and (3.5, 2.3).. node[below]{$B(t)$} (5, 4.9);
\draw[dotted] (0, 4.9)-- (5, 4.9) node[right] {\tiny{$K$}};
\draw[name path = B](0, 3.5)--(5, 3.5) node[right] {$L$};
\draw[name path = C, dotted] (5, 0)--(5, 4.9);
\draw[name path =E, dotted] (3.9,0) node[below]{$t_*$}--(3.9, 3.5);
\draw[thick, orange, dotted] (0, 3.8)--(5, 3.8) node[right]{\tiny{$L+\delta$}};
\draw[thick, orange, dotted](4.2, 0) node[below]{\tiny{$t_\delta$}}--(4.2, 3.8);
\draw[name path = F, blue, dashed](0, 2.2) node[above]{\textcolor{black}{$x^*(t)$}}..controls (1, 2.25) and (2, 2.5)..(3, 3.5)--(3.9, 3.5);
\draw[name path=G, dashed](0, 0)--(3.9, 0);
\fill[green, opacity=0.5, intersection segments={of= A and B, sequence={L2--R2[reverse]}}];
\fill[green, opacity=0.5] (3.9, 0) rectangle (5, 3.5);
\tikzfillbetween[of=F and G]{green, opacity=0.5};
\end{tikzpicture}
\caption{Two sketches to illuminate Lemmas \ref{TMOGIXIND} for $t_*>0$. The areas coloured in green is $\mathcal{S}\subset\mathcal{D}$ and the blue dashed lines are plotting function $x^*:[0, t_*]\mapsto(0, L]$. The right figure is to illuminate the possible case stated in Remark \ref{TCOBC5}.}
\label{TTSTILC5}
\end{figure}

\begin{rem}
It has been highlighted in multiple pieces of literature (if not all of them, such as \cite[Page 113]{Kitapbayev2014}) that if either the maps $t\mapsto H(t, x)$ or $x\mapsto H(t, x)$ are monotone (the latter monotonicity is used to establish the up-connectedness of the sets), we will not be able to compare two integrals with different signs and prove most of the properties possessed by the optimal stopping boundary, which is probably the only downfall of such analytical method but opens the door to many interesting research possibilities; this is, unfortunately, the story of $K\leq L$. By narrowing down the scale of the continuation set (the idea behind this is that, in the American put option pricing problem, the lower bound for the continuation set when $T<\infty$ is the optimal stopping level obtained from pricing the perpetual American put option), we can make an additional assumption to further develop some insights towards the shape of sets $\mathcal{C}$ and $\mathcal{D}$.
\end{rem}

\begin{Assumption} \label{TMOHTC5}
Parameters $r$ and $\sigma$ are chosen such that the map $t\mapsto H(t, x)$ is decreasing on the set 
\begin{align*}
\{(t, x):t\in[0, t_*], x\geq x^*(t)\} \cup \{ (t, x): t\in[t_*, T], x\geq B(t) \} \cup \{(t_*, x): x\in[x^*(t_*), L]\}.
\end{align*}
\end{Assumption}

\begin{rem}
Note that the above assumption holds true in the state space $[0, t_*]\times[x^*(t), L]$ generally, see Figure \ref{themapofxstar2}. In addition, for $x>L$, $H(t, x)=-rK$.
\end{rem}

For future reference, let set 
\begin{align*}
\mathcal{S}= \{(t, x):t\in[0, t_*], x\geq x^*(t)\} \cup \{ (t, x): t\in[t_*. T], x\geq B(t) \} \cup \{(t_*, x): x\in[x^*(t_*), L]\},
\end{align*}
so that, by Lemma \ref{TMOGIXIND}, the continuity of $x^*$ in Lemma \ref{TMOGIX}, we have $\mathcal{S}\subseteq\mathcal{D}$. Let $s:[0, T]\mapsto[x^*(0), K]$ be the increasing boundary of $\mathcal{S}$.

\begin{Proposition}\label{TMOVMGL}
The map $t\mapsto V(t,x) - G(t,x)$ is decreasing on $[0, T]$.
\end{Proposition}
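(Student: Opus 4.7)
The plan is to fix $t_1 < t_2$ in $[0, T]$ and $x \in (0, \infty)$, pick $\tau_2 \in [0, T - t_2]$ attaining the supremum in $V(t_2, x)$ (existence by Lemma \ref{tauDisoptimal}), and exploit its admissibility for $V(t_1, x)$ (since $\tau_2 \leq T - t_2 \leq T - t_1$). This yields the one-sided comparison
\begin{align*}
V(t_1, x) - V(t_2, x) \geq E\bigl[e^{-r\tau_2}\bigl(G(t_1+\tau_2, X^x_{\tau_2}) - G(t_2+\tau_2, X^x_{\tau_2})\bigr)\bigr],
\end{align*}
reducing the statement to showing that the right-hand side dominates $G(t_1, x) - G(t_2, x)$.

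I would then apply the local time--space expansion of $e^{-rs} G(t + s, X^x_{t + s})$ displayed just before \eqref{GKBTC} twice, with initial times $t_1$ and $t_2$, subtract the two identities and take expectation. The martingale parts vanish, the $dl^K$ contributions cancel (their prefactors are $t$-independent), and what remains is
\begin{align*}
E\bigl[e^{-r\tau_2}\bigl(G(t_1+\tau_2, X^x_{\tau_2}) - G(t_2+\tau_2, X^x_{\tau_2})\bigr)\bigr] - \bigl(G(t_1, x) - G(t_2, x)\bigr) = I_H - I_L,
\end{align*}
where
\begin{align*}
I_H &= E\int_0^{\tau_2} e^{-ru}\bigl[H(t_1+u, X^x_u) - H(t_2+u, X^x_u)\bigr]\, I\{X^x_u \neq L, X^x_u \neq K\}\, du,\\
I_L &= \tfrac{K-L}{2}\, E\int_0^{\tau_2} e^{-ru}\bigl[Z_x(t_1+u, L-) - Z_x(t_2+u, L-)\bigr]\, dl^L_u(X^x).
\end{align*}
Thus the claim reduces to proving $I_H \geq 0$ and $I_L \leq 0$.

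For $I_H$, optimality of $\tau_2$ forces $(t_2 + u, X^x_u) \in \mathcal{C}$ for $u < \tau_2$; combined with the inclusions established in Lemmas \ref{SSINE}, \ref{SSINEi} and \ref{TMOGIXIND}, the trajectory lies in $[0, t_*]\times[x^*(t), \infty) \cup [t_*, T]\times[B(t), \infty)$, where Assumption \ref{TMOHTC5} applies. For $X^x_u > L$ this is automatic since $H \in \{-rK, 0\}$ is $t$-constant; for $X^x_u < L$, the non-decreasing property of $x^*$ from Remark \ref{TROX} places $(t_1+u, X^x_u)$ in the same region, so the assumed monotonicity of $t \mapsto H(t, y)$ yields $H(t_1+u, X^x_u) \geq H(t_2+u, X^x_u)$ pointwise, giving $I_H \geq 0$. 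For $I_L$, the measure $dl^L_u(X^x)$ charges only $\{u : X^x_u = L\}$, so one only needs $t \mapsto Z_x(t, L-)$ to be increasing on $[0, T)$; this is read off from direct differentiation of the explicit formula for $Z$ in Proposition \ref{PSPML} (the computations behind \eqref{A4}) and is consistent with the blow-up of $Z_x$ at $(L, T)$. Since $K > L$, the sign of the prefactor then forces $I_L \leq 0$.

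The main obstacles are two compatibility checks: \emph{(a)} confirming that $\{(t_i + u, X^x_u) : u \in [0, \tau_2]\}$ for $i \in \{1, 2\}$ really sits inside the precise region specified in Assumption \ref{TMOHTC5}, which is essentially a bookkeeping exercise with $\mathcal{S} \subseteq \mathcal{D}$ and the non-decreasing nature of $x^*$ and $B$; and \emph{(b)} rigorously establishing the monotonicity of $t \mapsto Z_x(t, L-)$, which although plausible from the singularity at $T$ requires an explicit calculation with the Gaussian integrals defining $Z$. Once both are dispatched, combining $I_H \geq 0$ and $I_L \leq 0$ gives $V(t_1, x) - V(t_2, x) \geq G(t_1, x) - G(t_2, x)$, which is the desired monotonicity.
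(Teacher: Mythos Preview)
Your proposal is correct and follows essentially the same route as the paper: choose $\tau_2$ optimal for $V(t_2,x)$, apply the local time--space expansion of $e^{-rs}G$ at the two starting times, and use Assumption \ref{TMOHTC5} for the $H$-integral together with the monotonicity of $t\mapsto Z_x(t,L-)$ (established in the appendix) for the local-time term. The paper packages your compatibility check (a) via the auxiliary stopping time $\tau_{\mathcal{S}}=\inf\{u: X_u^x\in\mathcal{S}\}$ and the observation $\tau_2\le\tau_{\mathcal{S}}$, which is exactly the ``bookkeeping'' you describe.
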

\begin{proof}
Let $0\leq t_1 < t_2 \leq T$ and consider the stopping times:
\begin{align*}
\tau_S=\inf\{ 0 \leq u \leq T-t_2: X_u^x\leq s(t_2+u) \} \wedge(T-t_2),
\end{align*}
and let $\tau_2$ be the optimal stopping time for $V(t_2, x)$ such that $t_1+\tau_2<t_2+\tau_2\leq \tau_S$ and thereby $X_{\tau_2}^x\geq s(t_2+\tau_2)\geq s(t_1+\tau_2)$ a.s due to the fact that $\mathcal{S}\subseteq\mathcal{D}$ and the boundary of $\mathcal{S}$ is non-decreasing. Then, we have
\begin{align*}
&V(t_1, x) - V(t_2, x)  \geq \mathbb{E} \left[ e^{-r\tau_2} G\left(t_1+\tau_2, X_{\tau_2}^x\right) \right] - \mathbb{E} \left[ e^{-r\tau_2} G\left(t_2+\tau_2, X_{\tau_2}^x\right) \right] \\
&= G(t_1, x)-G(t_2, x) \\
&\,\,\,+ \mathbb{E} \left[ \int_0^{\tau_2} e^{-ru} \left(H \left( t_1+u, X_u^x \right) - H(t_2+u, X_u^x)\right) I\{X_u^x<L\}du \right]\\
&\,\,\,+ \mathbb{E} \left[ \frac{1}{2}\int_0^{\tau_2} e^{-ru} (K-L) \left(Z_x(t_2, L-)- Z_x(t_1, L-)\right) dl_u^L(X^x) \right] \geq G(t_1, x) - G(t_2, x),
\end{align*}
where the second inequality holds true via the map $t\mapsto H(t, x)$ is decreasing and $t\mapsto Z_x(t, L-)$ is increasing (see \ref{APFC5}.1). In particular, we arrive at the inequality:
\begin{align*}
V(t_1, x)-G(t_1, x) \geq V(t_2, x)- G(t_2, x),
\end{align*}
which is precisely the claim.
\end{proof}

The usefulness of Proposition \ref{TMOVMGL} is well demonstrated by the following results:
\begin{Proposition} \label{TDOTBC5}
(i) The stopping set $\mathcal{D}$ is right-connected, i.e. increasing w.r.t time $t$;

(ii) The continuation set $\mathcal{C}$ is left-connected, i.e. decreasing w.r.t time $t$;

(iii) The stopping set $\mathcal{D}$ is down-connected;

(iv) The continuation set $\mathcal{C}$ is up-connected.
\end{Proposition}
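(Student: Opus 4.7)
Parts (i) and (ii) follow directly from Proposition \ref{TMOVMGL}: if $(t_1,x)\in\mathcal{D}$ then $V(t_1,x)-G(t_1,x)=0$, so for any $t_2\ge t_1$ the monotonicity of $t\mapsto V(t,x)-G(t,x)$ forces $V(t_2,x)-G(t_2,x)\le 0$; combined with the universal bound $V\ge G$ this yields $(t_2,x)\in\mathcal{D}$. The contrapositive gives left-connectedness of $\mathcal{C}$.

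For (iii), fix $(t,x_1)\in\mathcal{D}$ with $t<T$ and any $0<x_2<x_1$. By Lemma \ref{KBTSS} the stopping set contains no point with $x>K$, so $x_1\le K$ and hence $x_2<x_1\le K$. Introduce the hitting time
\[
\tau_1 \;=\; \inf\bigl\{\,u\in[0,T-t]\,:\,X^{x_2}_u=x_1\,\bigr\}\wedge(T-t).
\]
Using part (i) (right-connectedness of $\mathcal{D}$) and the strong Markov property, any candidate stopping rule $\tau>\tau_1$ is dominated by $\tau\wedge\tau_1$: on $\{\tau_1<T-t\}$ we have $(t+\tau_1,x_1)\in\mathcal{D}$, hence $V(t+\tau_1,x_1)=G(t+\tau_1,x_1)$, so continuation beyond $\tau_1$ is not beneficial. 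Consequently the supremum defining $V(t,x_2)$ may be restricted to $\tau\le\tau_1$, and on this event path continuity yields $X^{x_2}_u<x_1\le K$ for every $u\in[0,\tau]$, so the local time $l^K_u(X^{x_2})$ is identically zero. Equation \eqref{GKBTC} then reduces $E_{t,x_2}[e^{-r\tau}G(t+\tau,X_{t+\tau})]$ to $G(t,x_2)$ plus three summands, each of which is non-positive: the term $(-rK)\int_0^\tau e^{-ru}I\{L\le X^{x_2}_u<K\}du$; the integral with integrand $(-rKZ-\sigma^2 (X^{x_2}_u)^2 Z_x)(t+u,X^{x_2}_u)I\{X^{x_2}_u<L\}$, non-positive since $Z,Z_x\ge 0$ on $(0,L)$ by Corollary \ref{MAPOZwrttax} and \ref{APFC5}; and the local-time term $-\tfrac{1}{2}\int_0^\tau e^{-ru}(K-L)Z_x(t+u,L-)dl^L_u$, non-positive because $K>L$ and $Z_x(\cdot,L-)\ge 0$. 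Taking expectations and the supremum over $\tau\le\tau_1$ gives $V(t,x_2)\le G(t,x_2)$, hence $V(t,x_2)=G(t,x_2)$ and $(t,x_2)\in\mathcal{D}$.

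Part (iv) is the contrapositive of (iii): were $(t,x_1)\in\mathcal{C}$ and $(t,x_2)\in\mathcal{D}$ with $x_2>x_1$, the just-proved down-connectedness would force $(t,x_1)\in\mathcal{D}$, contradicting the membership of $(t,x_1)$ in $\mathcal{C}$.

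The principal obstacle in this blueprint is legitimising the strong-Markov reduction $\tau\mapsto\tau\wedge\tau_1$, which relies critically on having first established right-connectedness of $\mathcal{D}$ in part (i) together with the propagation of the equality $V=G$ to $(t+\tau_1,x_1)$. Once this reduction is in hand, the sign analysis of each surviving term in \eqref{GKBTC} is routine, relying only on the non-negativity of $Z$ and $Z_x$ established earlier and the standing assumption $K>L$; in particular, neither Assumption \ref{TMOHTC5} nor the more delicate monotonicity of $H$ is needed for this step.
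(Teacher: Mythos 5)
Your proof is correct and follows the paper's own route: parts (i)--(ii) are read off from Proposition \ref{TMOVMGL} exactly as in the text, and parts (iii)--(iv) rest on the same ``the process cannot reach the level $K$ before stopping, so every surviving term in \eqref{GKBTC} is non-positive'' sign analysis that underlies Lemma \ref{SSINEi}. Your hitting-time reduction $\tau\mapsto\tau\wedge\tau_1$ (justified by right-connectedness and the strong Markov property) makes explicit the step the paper compresses into ``from which, (iii) and (iv) follow'', and you correctly observe that this step needs only $K>L$ and the non-negativity of $Z$ and $Z_x$, not Assumption \ref{TMOHTC5}.
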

\begin{proof}
Assertions (i) and (ii) are immediate from Proposition \ref{TMOVMGL}, from which, (iii) and (iv) follow.
\end{proof}

\begin{rem}
Once again, because the gain function is not smooth in the stopping set $\mathcal{D}$ at level $L$, the presence of the local-time term in the pricing formula should not come to us as a surprise; above all, the twist comes when the optimal stopping boundary icrosses or remains for some time at $L$ (consider the situation for $B(t)<L<K$ and $x^*(t)=L$ for all $t\in[0, t_*]$, then $\{ (t, x) \in [0, T]\times[0, L] \}\subset \mathcal{D}$), the \textit{smooth-fit} condition fails to hold. See \cite[Page 19]{DetempleandKitapbayev2018} for similar treatment of such problem.
\end{rem}

\begin{rem}
The minimal conditions under which the \textit{smooth-fit} condition can hold in greater generality are the \textit{regularity} of the diffusion process $X$ and the \textit{differentiability} of the gain function $G$. For further contribution and examples, we refer to \cite[Page 155]{PeskirandShiryaev}.
\end{rem}

To further construct the continuity of the value function, we need a definition first:
\begin{Definition}
Let times $t_b$ and $t^b$ be defined as follows:
\begin{align*}
\begin{cases}
b(t)<L,& \text{for $t\in[0, t_b)$,} \\
b(t)=L,& \text{for $ t\in[t_b, t^b]$,}\\
b(t)>L,& \text{for $t\in(t^b, T]$},
\end{cases}
\end{align*}
and if $b(t)\geq L$ for all $t\in[0, T]$, we set $t_b=0$. Note also that if $t_b=t^b$, then $t^b<T$.
\end{Definition}

As the reader has hopefully noticed from the proof of Lemma \ref{VICC51} and Proposition \ref{CORRLGB} that, the difficulty in the proof of continuity of $V$ and smooth-fit condition lies in the singular point $(L, T)$ of $Z_x(t, x)$ (and thereby in the applicability of the dominated convergence theorem). To overcome this, the key idea is to keep the state space away from such a point by taking the continuity and the monotonicity of $B$ in $[t_*, T]$ into account (and these properties come for free!).

\begin{Cor}\label{GIUCIZI}
The gain function is uniformly continuous on the state space $[0, t_*]\times(0, \infty)$.
\end{Cor}
\begin{proof}
Immediate from Corollary \ref{GIUCIZL} and Proposition \ref{TMOGIX}.
\end{proof}

\begin{Lemma}\label{VICC52}
The value function is continuous on $[0, T)\times(0, \infty)$. 
\end{Lemma}
\begin{proof}
The proof of the value function being continuous on $[t_*, T)\times(0, \infty)$ is similar to that of Lemma \ref{VICC51}. Its continuity on $[0, t_*]\times(0, x^*(t)]$ once again follows the continuity of the gain function due to the definition of the stopping set $\mathcal{D}$. 

Next, we ought to show that on state space $[0, t_*]\times[x^*(t), \infty)$, (i) the map $t\mapsto V(t, x)$ is continuous for each fixed $x\in[x^*(t), \infty)$ and (ii) the map $x\mapsto V(t, x)$ is continuous uniformly in $t\in[0, t_*]$. The same argument as that in the proof of Lemma \ref{VICC51} can be applied to conclude statement (i), we therefore confine ourselves by showing statement (ii). 

As before, the up-down connectedness of $\mathcal{C}$ and $\mathcal{D}$ suggests that for any $x^*(t)\leq x_1<x_2<\infty$,
\begin{align*}
G(t, x_2)-G(t, x_1) \leq V(t, x_2) - V(t, x_1),
\end{align*}
and let $\tau_2$ be optimal for $V(t, x_2)$ so that
\begin{align}
& V(t, x_2) - V(t, x_1)\nonumber\\
&\leq \mathbb{E}\left[ e^{-r\tau_2} G\left(t+\tau_2, X_{\tau_2}^{x_2}\right) \right] - \mathbb{E} \left[ e^{-r\tau_2} G\left(t+\tau_2, X_{\tau_2}^{x_1}\right) \right] \nonumber\\
&\leq \mathbb{E}\Big[ e^{-r\tau_2}\left( G\left(t+\tau_2, X_{\tau_2}^{x_2}\right) - G\left(t+\tau_2, X_{\tau_2}^{x_1}\right) \right) \Big] \nonumber \\
& = \mathbb{E} \left[ e^{-r\tau_2} \left( \left( K- X_{\tau_2}^{x_2} \right)^+ - \left( K- X_{\tau_2}^{x_1} \right)^+  \right) Z\left(t+\tau_2, X_{\tau_2}^{x_2}\right) \right] \nonumber\\
&\qquad + \mathbb{E} \left[ e^{-r\tau_2}  \left( K- X_{\tau_2}^{x_1} \right)^+ \left( Z\left( t+\tau_2, X_{\tau_2}^{x_2}  \right) - Z\left( t+\tau_2, X_{\tau_2}^{x_1}  \right) \right) \right] \nonumber \\
&\leq K \mathbb{E}\Big[ e^{-r\tau_2} \left(  Z\left( t+\tau_2, X_{\tau_2}^{x_2}  \right) - Z\left( t+\tau_2, X_{\tau_2}^{x_1}  \right) \right) \Big]\nonumber\\
& \leq  K \mathbb{E}\Big[ e^{-r\tau_2} I\{ X_{\tau_2}^{x_2} > L+\delta\}  \left(  Z\left( t+\tau_2, X_{\tau_2}^{x_2}  \right) - Z\left( t+\tau_2, X_{\tau_2}^{x_1}  \right) \right) \Big]\nonumber\\
& \qquad +  K \mathbb{E}\Big[ e^{-r\tau_2} I\{ X_{\tau_2}^{x_2} \leq L+\delta\} \left(  Z\left( t+\tau_2, X_{\tau_2}^{x_2}  \right) - Z\left( t+\tau_2, X_{\tau_2}^{x_1}  \right) \right) \Big], \label{VICC52E1}
\end{align}
where the fourth inequality is because of $X_{\tau_2}^{x_2} > X_{\tau_2}^{x_1}$ and inequality \eqref{IEQVCL}.

Before we proceed, recall that $B(t)\in[L, K]$ for $t\in[t_*, T]$ (and $L<K$). By employing the continuity and monotonicity of $B$, intermediate value theorem tells us that there exists a $t_\delta\in(t_*, T)$ so that $B(t_\delta) = L+\delta$ and because $\tau_2<\tau_\mathcal{S}$, where $\tau_\mathcal{S} = \inf\{ u\in[0, T-t]: (t+u, X_u^{x_2})\in\mathcal{S} \}$ (Set $\mathcal{S}$ is the area coloured in green in Figure \ref{TTSTILC5}), we know that $X_{\tau_2}^{x_2}\leq L+\delta$ implies $t+\tau_2<t_\delta$ a.s. (see Figure \ref{TYBC52}), equation \eqref{VICC52E1} therefore equals
\begin{align}
&V(t, x_2) - V(t, x_1) \leq  K \mathbb{E}\Big[ e^{-r\tau_2} I\{ X_{\tau_2}^{x_2} > L+\delta\}  \left(  Z\left( t+\tau_2, X_{\tau_2}^{x_2}  \right) - Z\left( t+\tau_2, X_{\tau_2}^{x_1}  \right) \right) \Big] \nonumber\\
&\qquad \qquad\qquad \qquad\,\,\,\,\, +  K \mathbb{E}\Big[ e^{-r\tau_2} I\{ t+\tau_2 < t_\delta \} \left(  Z\left( t+\tau_2, X_{\tau_2}^{x_2}  \right) - Z\left( t+\tau_2, X_{\tau_2}^{x_1}  \right) \right) \Big]\nonumber\\
& \qquad  =  K \mathbb{E}\Big[ e^{-r\tau_2} I\{ X_{\tau_2}^{x_2} > L+\delta\}  \big(  Z\left( t+\tau_2, X_{\tau_2}^{x_2}  \right) - Z\left( t+\tau_2, X_{\tau_2}^{x_1}  \right) \big) \Big]\nonumber\\
&\qquad \qquad +  K (x_2 - x_1) \mathbb{E}\Big[ e^{-\frac{\sigma^2}{2}\tau_2 + \sigma W_{\tau_2}} I\{ t+\tau_2 < t_\delta \} Z_x\left( t+\tau_2, X_{\tau_2}^{x_3} \right) \Big]\nonumber\\
&\qquad  \leq K \mathbb{E}\Big[ e^{-r\tau_2} I\{ X_{\tau_2}^{x_2} > L+\delta\}  \left(  Z\left( t+\tau_2, X_{\tau_2}^{x_2}  \right) - Z\left( t+\tau_2, X_{\tau_2}^{x_1}  \right) \right) \Big]+  K C_2  (x_2 - x_1) \label{VICC52E2}
\end{align}
where the last two steps are due to the mean value theorem for $x_3\in[x_1, x_2]$, the strong solution of GBM and the fact that $Z_x(t, x)\leq C_2$ in $[0, t_\delta)\times (0, L+\delta]$ (see Appendix \ref{ALAP1}). 

Then, we can choose a $\delta>0$ such that $X_{\tau_2}^{x_1} > L$ almost surely whenever $X_{\tau_2}^{x_2} > L+\delta$ and $0<x_2-x_1\leq \frac{x^*(0)\delta}{2L}< \frac{x_1 \delta}{2 L}$ (the justification is the same as that in the proof of Lemma \ref{VICC51}, we omit further details), by setting such a $\delta$ in \eqref{VICC52E2}, joining with the fact that $Z(t, x) = 1$ for $x\geq L$,
\begin{align*}
G(t, x_2) - G(t, x_1)  \leq V(t, x_2) - V(t, x_1) \leq K C_2 (x_2 - x_1),
\end{align*}
after which, given $\epsilon>0$, we can choose a $\delta'=\min\big\{\frac{x^*(0)\delta}{2L}, \frac{\epsilon}{KC_2}\big\}$ (which is independent of $t$) such that $|x_1-x_2|<\delta'$, together with Corollary \ref{GIUCIZI}, imply $|V(t, x_1)-V(t, x_2)|<\epsilon$. This concludes the proof.
\end{proof}

To this point, we have been working hard on the discussion of the stopping and continuation sets and now we can finally return to our main purpose, i.e. formulation of the \textit{free-boundary problem}.
\begin{Proposition}[The Free-boundary Problem] \label{TFBPFC5BCL}
The stopping set and the continuation set are of the forms:
\begin{align*}
\mathcal{C} &= \{ (t, x)\in[0, T)\times(0, \infty): x>b(t) \},\\
\mathcal{D} &= \{ (t, x)\in[0, T)\times(0, \infty): x\leq b(t) \},
\end{align*}
where the map $t\mapsto b(t)$ is increasing. The free-boundary problem in Section 2.2 is therefore reshuffled accordingly as
\begin{align}
&{V}_t + \mathbb{L}_x {V} - r {V} = 0 \qquad\qquad \text{for $(t, x)\in\mathcal{C}$}, \label{FBPL1ee0}\\
&{V}(t, x)=G(t, x) \qquad\qquad\,\,\,\,\,\,\,\text{for $x=b(t)$ (instantaneous stopping)},\label{FBPL2ee0}\\
&{V}_x(t, x)=G_x(t, x) \qquad\,\,\,\,\,\,\,\,\,\,\,\,\,\,\text{for $x=b(t)$ with $b(t)\neq L$ (smooth-fit)},\label{FBPL23ee0}\\
&{V}_x(t, L\pm)=G_x(t, L\pm) \qquad\,\,\, \text{for $x=b(t)=L$, (smooth-fit breaking down),} \label{FBPL22ee0}\\
&{V}(t, x)>{G}(t, x) \qquad\qquad\,\,\,\,\,\, \text{for $(t, x)\in\mathcal{C}$},\label{FBPL3ee0}\\
&{V}(t, x)={G}(t, x) \qquad\qquad\,\,\,\,\,\, \text{for $(t, x)\in\mathcal{D}$}.\label{FBPL4ee0}
\end{align}
\end{Proposition}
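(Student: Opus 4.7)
The plan is to assemble this as a synthesis of the structural results already proved, plus two genuinely new pieces (the PDE on $\mathcal{C}$ and the smooth-fit statements). First I would define $b(t) = \sup\{x > 0 : (t,x) \in \mathcal{D}\}$ with the convention $b(t) = 0$ if the set is empty. By Lemma \ref{KBTSS} this supremum is finite (bounded above by $K$), and the down-connectedness of $\mathcal{D}$ plus up-connectedness of $\mathcal{C}$ from Proposition \ref{TDOTBC5} parts (iii)--(iv) immediately give the claimed characterisations $\mathcal{D} = \{x \leq b(t)\}$ and $\mathcal{C} = \{x > b(t)\}$ (using also that $\mathcal{D}$ is closed, a consequence of the continuity of $V$ and $G$ established in Lemma \ref{VICC52}). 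Monotonicity of $b$ falls out of the right-connectedness (part (i)): if $t_1 < t_2$ and $x \leq b(t_1)$ then $(t_1,x) \in \mathcal{D}$ forces $(t_2,x) \in \mathcal{D}$, so $x \leq b(t_2)$. Equations \eqref{FBPL3ee0} and \eqref{FBPL4ee0} are just the definitions of $\mathcal{C}$ and $\mathcal{D}$, while \eqref{FBPL2ee0} follows from the closedness of $\mathcal{D}$ (so $(t, b(t)) \in \mathcal{D}$) together with $V = G$ on $\mathcal{D}$.

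For the PDE \eqref{FBPL1ee0} I would run the standard argument: fix $(t_0, x_0) \in \mathcal{C}$, choose an open rectangle $R \subset \mathcal{C}$ around it, let $\tau_R$ be the first exit time of $(t+s, X_{t+s}^x)$ from $R$, and use the strong Markov property combined with the optimality of $\tau_{\mathcal{D}}$ from Lemma \ref{tauDisoptimal} to conclude that $e^{-rs} V(t+s, X_{t+s}^x)$ is a martingale on $[0, \tau_R]$. Interior Schauder estimates on $R$ (applicable because $G$, hence $V$ on $\partial R$, is smooth there and away from the singular points $\{K, L\}$ or the terminal time $T$) upgrade $V$ to $C^{1,2}(\mathcal{C})$, and then It\^o's formula together with the martingale property force $V_t + \mathbb{L}_X V - rV = 0$ on $R$.

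For the smooth-fit \eqref{FBPL23ee0} at points where $b(t) \neq L$, I would repeat essentially verbatim the two-sided incremental argument used for the corresponding Proposition \ref{CORRLGB} above. The upper bound $\lim_{\varepsilon \to 0^+} [V(t, b(t)+\varepsilon) - V(t, b(t))]/\varepsilon \geq G_x(t, b(t))$ is immediate from $V \geq G$; for the matching upper estimate one takes the optimal $\tau_\varepsilon$ for $V(t, b(t)+\varepsilon)$, uses the inequality $(K-y)^+ - (K-z)^+ \leq (z-y)^+$ in the form \eqref{TIEQIC5SF}, and exploits $b(t) \neq L$ to keep $X_{\tau_\varepsilon}^{b(t)}$ bounded away from the singularity of $Z_x$ so that the dominated convergence theorem applies as $\varepsilon \to 0$.

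The genuine obstacle is \eqref{FBPL22ee0}, the two-sided smooth-fit at $b(t) = L$. Here $G_x(t, L-) \neq G_x(t, L+)$ because of the factor $Z$, so the customary argument (which would produce $V_x(t, L) = G_x(t, L)$ at a single value) must be split into a one-sided statement on each side of $L$. The plan is to perform the incremental comparison only for $\varepsilon > 0$ on one side, using that on the continuation side $x = L+\varepsilon > L$ the approximating stopped process $X_{\tau_\varepsilon}^{L+\varepsilon}$ stays in the region where $Z \equiv 1$, so that the estimate reduces to the classical American-put smooth-fit and yields $V_x(t, L+) = G_x(t, L+) = -1$. On the stopping side $x = L - \varepsilon < L$, $V = G$ identically by Lemma \ref{SSINEi}/Remark \ref{SSINEii}-type arguments, so $V_x(t, L-) = G_x(t, L-)$ trivially. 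The jump $G_x(t, L+) - G_x(t, L-) = (K-L) Z_x(t, L-)$ is then inherited by $V$, explaining the breakdown and consistent with the local-time term that will appear in the integral representation.
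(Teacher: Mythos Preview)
Your structural assembly (definition of $b$, monotonicity, the PDE on $\mathcal{C}$, and \eqref{FBPL2ee0}--\eqref{FBPL4ee0}) is fine and matches what the paper does implicitly via Proposition~\ref{TDOTBC5}. The smooth-fit part, however, has two genuine gaps.

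First, for \eqref{FBPL23ee0} with $b(t)<L$, you cannot ``repeat essentially verbatim'' the argument of Proposition~\ref{CORRLGB}. That argument lives in the regime $t_*=0$, where $b(t)>L$ for all $t$, so $X_{\tau_\varepsilon}^{x+\varepsilon}=b(t+\tau_\varepsilon)>L$ automatically and one can force $X_{\tau_\varepsilon}^{x}>L$ for small $\varepsilon$, killing the $Z$-difference. Here $b(t)<L$, and the stopped process can land either below or above $L$; the condition $b(t)\neq L$ alone does \emph{not} keep $X_{\tau_\varepsilon}^{x}$ away from the singular point $(T,L)$ of $Z_x$. The paper's actual device is to pick $\delta>0$ and $t_\delta\in(t_*,T)$ with $B(t_\delta)=L+\delta$, then split the event $\{X_{\tau_\varepsilon}^{x+\varepsilon}\le L+\delta\}$ versus $\{X_{\tau_\varepsilon}^{x+\varepsilon}>L+\delta\}$. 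On the first event one uses $\mathcal{S}\subset\mathcal{D}$ to deduce $t+\tau_\varepsilon\le t_\delta<T$, so $Z_x$ is bounded and the mean-value theorem applies; on the second event one shows $X_{\tau_\varepsilon}^{x}>L$ for $\varepsilon<x\delta/(2L)$, so the $Z$-difference vanishes. Without this time-localisation the dominated convergence step fails.

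Second, for $V_x(t,L+)$ when $b(t)=L$, your claim that ``$X_{\tau_\varepsilon}^{L+\varepsilon}$ stays in the region where $Z\equiv 1$, so the estimate reduces to the classical American-put smooth-fit'' is not enough. It is true that $X_{\tau_\varepsilon}^{L+\varepsilon}=b(t+\tau_\varepsilon)\ge L$, but the incremental bound compares $G(t+\tau_\varepsilon,X_{\tau_\varepsilon}^{L+\varepsilon})$ with $G(t+\tau_\varepsilon,X_{\tau_\varepsilon}^{L})$, and the comparison process satisfies $X_{\tau_\varepsilon}^{L}=\frac{L}{L+\varepsilon}\,b(t+\tau_\varepsilon)$, which is strictly below $L$ whenever $b(t+\tau_\varepsilon)=L$. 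Hence $Z(t+\tau_\varepsilon,X_{\tau_\varepsilon}^{L})<1$ and the $Z$-difference term does \emph{not} disappear. The paper again handles this with the same $t_\delta$-splitting, applying the mean-value theorem to the $Z$-difference on the set $\{t_b<t+\tau_\varepsilon\le t_\delta\}$ and using $X_{\tau_\varepsilon}^{L}>L$ on $\{X_{\tau_\varepsilon}^{L+\varepsilon}>L+\delta\}$. Your proposal for $V_x(t,L-)=G_x(t,L-)$ on the stopping side is correct and matches the paper.
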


\begin{proof} [Proof of the Smooth-fit Condition \eqref{FBPL23ee0}]
The proof for $b(t)>L$ is essentially the same as that of Proposition \ref{CORRLGB}. It thus remains to show \eqref{FBPL23ee0} holds true for $b(t)<L$. Let a point $(t, x)\in[0, T)\times (0, \infty)$ lying on the boundary $b$ be fixed, i.e. $x=b(t)<L$. Then, let $\epsilon>0$ be such that $x+\epsilon<L$, and
\begin{align*}
\frac{V(t, x+\epsilon)- V(t, x)}{\epsilon}\geq \frac{G(t, x+\epsilon) - G(t, x)}{\epsilon}, 
\end{align*}
where the inequality is due to $(t, x+\epsilon)\in\mathcal{C}$ and $(t, x)\in\mathcal{D}$. Note that by taking the limit as $\epsilon\to0$, 
\begin{align}
\lim_{\epsilon\to0}\frac{G(t, x+\epsilon)- G(t, x)}{\epsilon} = - Z(t, x) + (K-x) \frac{\partial}{\partial x} Z(t, x). \label{SMFLGBG}
\end{align}

In order to prove the converse inequality, we first note that (the exact same argument is presented in the proof of Lemma \ref{VICC52}, so we omit some details here) there exists $\delta>0$ s.t. $B(t_\delta)=L+\delta$ for $t_\delta\in(t_*, T)$. Then, let $\tau_\epsilon$ be optimal for $V(t, x+\epsilon)$, where we can choose $\epsilon\in\left(0, \frac{x\delta}{2L}\right)$ so that $X_{\tau_\epsilon}^{x}>L$ almost surely whenever $X_{\tau_\epsilon}^{x+\epsilon}>L+\delta$ (the justification follows the same pattern as that in the proof of Lemma \ref{VICC51}), 
\begin{align*}
& \frac{V(t, x+\epsilon)- V(t, x)}{\epsilon}  \leq  \frac{1}{\epsilon} \mathbb{E}\left[ e^{-r\tau_\epsilon}\left( G\left(t+\tau_\epsilon, X_{\tau_\epsilon}^{x+\epsilon}\right) - G\left(t+\tau_\epsilon, X_{\tau_\epsilon}^x\right) \right)\right]\\
&\,\,\, \leq - \mathbb{E} \left[  e^{-\frac{\sigma^2}{2} \tau_\epsilon + \sigma W_{\tau_\epsilon}}  I\{ X_{\tau_\epsilon}^{x+\epsilon} < K \} Z\left( t+\tau_\epsilon, X_{\tau_\epsilon}^{x+\epsilon}  \right)  \right]\\
&\,\,\,\,\,\, + \frac{1}{\epsilon}  \mathbb{E}\left[e^{-r\tau_\epsilon} \left( K- X_{\tau_\epsilon}^x \right)^+ I\{ X_{\tau_\epsilon}^{x+\epsilon}\leq L+\delta \}  \left( Z\left( t+\tau_\epsilon, X_{\tau_\epsilon}^{x+\epsilon}  \right)  -  Z\left( t+\tau_\epsilon, X_{\tau_\epsilon}^{x}  \right) \right) \right] \\
&\,\,\,\,\,\, +  \frac{1}{\epsilon}  \mathbb{E} \left[e^{-r\tau_\epsilon} \left( K- X_{\tau_\epsilon}^x \right)^+ I\{ X_{\tau_\epsilon}^{x+\epsilon}> L+\delta \}  \left( Z\left( t+\tau_\epsilon, X_{\tau_\epsilon}^{x+\epsilon}  \right)  -  Z\left( t+\tau_\epsilon, X_{\tau_\epsilon}^{x}  \right) \right) \right]\\
&\,\,\, = - \mathbb{E} \left[  e^{-\frac{\sigma^2}{2} \tau_\epsilon + \sigma W_{\tau_\epsilon}}  I\{ X_{\tau_\epsilon}^{x+\epsilon} < K \} Z\left( t+\tau_\epsilon, X_{\tau_\epsilon}^{x+\epsilon}  \right)  \right]\\
&\,\,\,\,\,\, + \frac{1}{\epsilon}  \mathbb{E} \left[e^{-r\tau_\epsilon} \left( K- X_{\tau_\epsilon}^x \right)^+ I\{ X_{\tau_\epsilon}^{x+\epsilon}\leq L+\delta \}  \left( Z\left( t+\tau_\epsilon, X_{\tau_\epsilon}^{x+\epsilon}  \right)  -  Z\left( t+\tau_\epsilon, X_{\tau_\epsilon}^{x}  \right) \right) \right] \\
&\,\,\,= - \mathbb{E} \left[  e^{-\frac{\sigma^2}{2} \tau_\epsilon + \sigma W_{\tau_\epsilon}}  I\{ X_{\tau_\epsilon}^{x+\epsilon} < K \} Z\left( t+\tau_\epsilon, X_{\tau_\epsilon}^{x+\epsilon}  \right)  \right]\\
&\,\,\,\,\,\, + \frac{1}{\epsilon}  \mathbb{E} \left[e^{-r\tau_\epsilon} \left( K- X_{\tau_\epsilon}^x \right)^+ I\{ t+\tau_\epsilon \leq t_\delta \}  \left( X_{\tau_\epsilon}^{x+\epsilon} - X_{\tau_\epsilon}^{x}  \right)  Z_x \left( t+\tau_\epsilon, X_{\tau_\epsilon}^{y}  \right) \right] \\
&\,\,\, = - \mathbb{E} \left[  e^{-\frac{\sigma^2}{2} \tau_\epsilon + \sigma W_{\tau_\epsilon}}  I\{ X_{\tau_\epsilon}^{x+\epsilon} < K \} Z\left( t+\tau_\epsilon, X_{\tau_\epsilon}^{x+\epsilon}  \right)  \right]\\
&\,\,\,\,\,\, +\mathbb{E} \left[e^{\frac{-\sigma^2}{2}\tau_\epsilon + \sigma W_{\tau_\epsilon}} \left( K- X_{\tau_\epsilon}^x \right)^+ I\{ t+\tau_\epsilon \leq t_\delta \}  Z_x \left( t+\tau_\epsilon, X_{\tau_\epsilon}^{y}  \right) \right],
\end{align*}
where the second inequality holds true because of \eqref{TIEQIC5SF} and the first equality follows from the facts that for our choice of $\epsilon$, $I\{ X_{\tau_\epsilon}^{x+\epsilon}> L+\delta \} = 1$ entails $I\{X_{\tau_\epsilon}^{x}> L\}=1$, the second equality is derived from the mean value theorem for $y\in[x, x+\epsilon]$. Recalling that $X$ is dominated by a positive integrable random variable, letting $\epsilon\to0$ (so that $\tau_\epsilon\to0$ almost surely), and exploiting the dominated convergence theorem (the boundedness of $Z_x(t, x)$ for $(t, x)\in [0, t_\delta)\times(0, L+\delta]$ is verified in Appendix \ref{ALAP1}) yield
\begin{align}
&\lim_{\epsilon\to0} - \mathbb{E} \left[  e^{-\frac{\sigma^2}{2} \tau_\epsilon + \sigma W_{\tau_\epsilon}}  I\{ X_{\tau_\epsilon}^{x+\epsilon} < K \} Z\left( t+\tau_\epsilon, X_{\tau_\epsilon}^{x+\epsilon}  \right)  \right]\nonumber\\
&\,\,\,\,\,\, + \lim_{\epsilon\to0} \mathbb{E} \left[e^{\frac{-\sigma^2}{2}\tau_\epsilon + \sigma W_{\tau_\epsilon}} \left( K- X_{\tau_\epsilon}^x \right)^+ I\{ t+\tau_\epsilon \leq t_\delta \}  Z_x \left( t+\tau_\epsilon, X_{\tau_\epsilon}^{y}  \right) \right]\nonumber\\
&\,\,\,\,\,\, =  - Z(t, x) + (K-x) \frac{\partial}{\partial x} Z(t, x), \label{SMFLGBL}
\end{align}
which, together with \eqref{SMFLGBG} and the squeeze theorem, yields that $\frac{\partial^+}{\partial x} V(t, x) = - Z(t, x) + (K-x) \frac{\partial}{\partial x} Z(t, x)$ and the fact that $(t, x), (t, x-\epsilon)\in\mathcal{D}$ tells us that 
\[\frac{V(t, x) - V(t, x-\epsilon)}{\epsilon} = \frac{G(t, x) - G(t, x-\epsilon)}{\epsilon},\]
which allows us to obtain \eqref{FBPL23ee0}.
\end{proof}

\begin{proof}[Proof of \eqref{FBPL22ee0}]
(i) Let $x=b(t)=L$ such that there exists $\epsilon>0$, $x-\epsilon<L$ and $(t, x-\epsilon)\in\mathcal{D}$. The first part of \eqref{FBPL22ee0} is a consequence of 
\begin{align*}
\frac{V(t, x) - V(t, x-\epsilon)}{\epsilon} & = \frac{G(t, x) - G(t, x-\epsilon)}{\epsilon} \\
& = \frac{(K-x)Z(t, x) - (K-x+\epsilon)Z(t, x-\epsilon) }{\epsilon}\\
& = -Z(t, x)+ \frac{(K-x+\epsilon) \left[ Z(t, x) - Z(t, x-\epsilon) \right] }{\epsilon},
\end{align*}
and via letting $\epsilon\to0$, we see $\lim\limits_{\epsilon\to0}\frac{V(t, x) - V(t, x-\epsilon)}{\epsilon}=-1 + (K-L)Z_x(t, L-)$

(ii) Again, let $x=b(t)=L$ so that there exists $0<\epsilon<\frac{x\delta}{2L}$ (where $\delta>0$ is defined as so that $B(t_\delta)=L+\delta$ for $t_\delta\in(t_*, T)$), $L<x+\epsilon<K$ and $(t, x+\epsilon)\in\mathcal{C}$ and that
\begin{align*}
\frac{V(t, x+\epsilon) - V(t, x)}{\epsilon} \geq \frac{G(t, x+\epsilon) - G(t, x)}{\epsilon}=-1.
\end{align*}

Furthermore, let $\tau_\epsilon$ be the optimal stopping time for $V(t, x+\epsilon)$ so that $X_{\tau_\epsilon}^{x+\epsilon} \geq L$ and that
\begin{align*}
&\frac{V(t, x+\epsilon) - V(t, x)}{\epsilon}  \leq \frac{1}{\epsilon} \mathbb{E}\left[ e^{-r\tau_\epsilon} \left( G\left(t+\tau_\epsilon, X_{\tau_\epsilon}^{x+\epsilon}\right) - G\left(t+\tau_\epsilon, X_{\tau_\epsilon}^{x}\right) \right) \right] \\
&\,\,\, \leq - \mathbb{E}\Big[ e^{-\frac{\sigma^2}{2}\tau_\epsilon - \sigma W_{\tau_\epsilon}} Z\left(t+\tau_\epsilon, X_{\tau_\epsilon}^{x}\right) I\{ X_{\tau_\epsilon}^{x+\epsilon} < K \} \Big] \\
&\,\,\,\,\,\, + \frac{1}{\epsilon} \mathbb{E}\Big[ e^{-r\tau_\epsilon} I\{ X_{\tau_\epsilon}^{x+\epsilon} = L \} \left( K -  X_{\tau_\epsilon}^{x+\epsilon} \right)^+ \left( Z\left(t+\tau_\epsilon, X_{\tau_\epsilon}^{x+\epsilon}\right) - Z\left(t+\tau_\epsilon, X_{\tau_\epsilon}^{x}\right)  \right) \Big] \\
&\,\,\,\,\,\, +  \frac{1}{\epsilon} \mathbb{E}\left[ e^{-r\tau_\epsilon} I\{ X_{\tau_\epsilon}^{x+\epsilon} > L+\delta \} \left( K -  X_{\tau_\epsilon}^{x+\epsilon} \right)^+ \left( Z\left(t+\tau_\epsilon, X_{\tau_\epsilon}^{x+\epsilon}\right) - Z\left(t+\tau_\epsilon, X_{\tau_\epsilon}^{x}\right)  \right) \right]\\
&\,\,\,\,\,\, +  \frac{1}{\epsilon} \mathbb{E}\left[ e^{-r\tau_\epsilon} I\{L< X_{\tau_\epsilon}^{x+\epsilon} < L+\delta \} \left( K -  X_{\tau_\epsilon}^{x+\epsilon} \right)^+ \left( Z\left(t+\tau_\epsilon, X_{\tau_\epsilon}^{x+\epsilon}\right) - Z\left(t+\tau_\epsilon, X_{\tau_\epsilon}^{x}\right)  \right) \right]\\
&\,\,\, = - \mathbb{E} \Big[ e^{-\frac{\sigma^2}{2}\tau_\epsilon - \sigma W_{\tau_\epsilon}} Z\left(t+\tau_\epsilon, X_{\tau_\epsilon}^{x}\right) I\{ X_{\tau_\epsilon}^{x+\epsilon} < K \} \Big] \\
&\,\,\,\,\,\, +  \frac{1}{\epsilon} \mathbb{E}\left[ e^{-r\tau_\epsilon} I\{ t_b <  t+\tau_\epsilon \leq t_\delta \} \left( K -  X_{\tau_\epsilon}^{x+\epsilon} \right)^+ \left( Z\left(t+\tau_\epsilon, X_{\tau_\epsilon}^{x+\epsilon}\right) - Z\left(t+\tau_\epsilon, X_{\tau_\epsilon}^{x}\right)  \right) \right] \\
&\,\,\,\,\,\, +  \frac{1}{\epsilon} \mathbb{E}\left[ e^{-r\tau_\epsilon} I\{X_{\tau_\epsilon}^{x+\epsilon} > L+\delta \} \left( K -  X_{\tau_\epsilon}^{x+\epsilon} \right)^+ \left( Z\left(t+\tau_\epsilon, X_{\tau_\epsilon}^{x+\epsilon}\right) - Z\left(t+\tau_\epsilon, X_{\tau_\epsilon}^{x}\right)  \right) \right] \\
&\,\,\,=- \mathbb{E}\Big[ e^{-\frac{\sigma^2}{2}\tau_\epsilon - \sigma W_{\tau_\epsilon}} Z\left(t+\tau_\epsilon, X_{\tau_\epsilon}^{x}\right) I\{ X_{\tau_\epsilon}^{x+\epsilon} < K \} \Big] \\
&\,\,\,\,\,\, +  \mathbb{E}\Big[ e^{-\frac{\sigma^2}{2}\tau_\epsilon - \sigma W_{\tau_\epsilon}} I\{ t_b <  t+\tau_\epsilon \leq t_\delta \} \left( K -  X_{\tau_\epsilon}^{x+\epsilon} \right)^+ Z_x \left(t+\tau_\epsilon, X_{\tau_\epsilon}^{y}\right) \Big] \\
&\,\,\,\,\,\, +  \frac{1}{\epsilon} \mathbb{E}\left[ e^{-r\tau_\epsilon}  I\{X_{\tau_\epsilon}^{x+\epsilon} > L+\delta \} \left( K -  X_{\tau_\epsilon}^{x+\epsilon} \right)^+ \left( Z\left(t+\tau_\epsilon, X_{\tau_\epsilon}^{x+\epsilon}\right) - Z\left(t+\tau_\epsilon, X_{\tau_\epsilon}^{x}\right)  \right) \right] \\
&\,\,\, = - \mathbb{E} \Big[ e^{-\frac{\sigma^2}{2}\tau_\epsilon - \sigma W_{\tau_\epsilon}} Z\left(t+\tau_\epsilon, X_{\tau_\epsilon}^{x}\right) I\{ X_{\tau_\epsilon}^{x+\epsilon} < K \} \Big] \\
&\,\,\,\,\,\, + \mathbb{E}\Big[ e^{-\frac{\sigma^2}{2}\tau_\epsilon - \sigma W_{\tau_\epsilon}} I\{ t_b <  t+\tau_\epsilon \leq t_\delta \} \left( K -  X_{\tau_\epsilon}^{x+\epsilon} \right)^+ Z_x \left(t+\tau_\epsilon, X_{\tau_\epsilon}^{y}\right) \Big],
\end{align*}
where the second inequality is due to \eqref{TIEQIC5SF} and in the first equality, we use the fact that 
\[\{L\leq X_{\tau_\epsilon}^{x+\epsilon}<L+\delta\} = \{t_b<t+\tau_\epsilon<t_\delta\},\] and in the third equality, we apply mean value theorem for $y\in[x, x+\epsilon]$ and with our choice of $\epsilon$, $X_{\tau_\epsilon}^{x+\epsilon} > L+\delta$ entails $X_{\tau_\epsilon}^{x} > L$ a.s.; from which we know that as $\epsilon\to0$, $\tau_\epsilon\to0$ almost surely, so that $I\{t+\tau_\epsilon > t^b\}\to0$ and hence equations in \eqref{FBPL22ee0} emerge.
\end{proof}

Before we bring this section to a close, we establish the continuity of the boundary. 
\begin{rem} \label{TCOBC5}
The proof of the continuity of $b$ on $[t^b, T]$ can be done by following the exact same pattern as that in \cite[Page 759-760]{PeskirandShiryaev}, and if $b$ remains at level L in $[t_b, t^b]$, we know from the left-connectedness of $\mathcal{D}$ that during such amount of time, $b$ must be continuous. Our task is therefore reduced to prove that $b$ is continuous on $[0, t_b]$.
\end{rem}

\begin{Lemma}
The optimal stopping boundary $b$ is continuous on $[0, T]$ and $b(T-)=K$.
\end{Lemma}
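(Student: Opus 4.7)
The plan is to invoke Remark \ref{TCOBC5} to decompose the task into three regions. On $[t^b, T]$ one has $b(t) > L$, so locally the payoff reduces to the standard American-put payoff $(K-x)^+$ and the three-step argument of Lemma \ref{BICOLLB} applies verbatim. On $[t_b, t^b]$ continuity is trivial since $b \equiv L$. For $b(T-) = K$, I would observe that the Corollary after Proposition \ref{PLB01} gives $B(t) \leq b(t)$ on $[t_*, T]$, while Lemma \ref{KBTSS} gives $b \leq K$; combined with $B(T-) = K$ this forces $b(T-) = K$ by squeeze. So the only nontrivial work is to prove continuity of $b$ on $[0, t_b]$, where $b(t) < L$.

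Right-continuity on $[0, t_b]$ will follow the standard scheme: by Lemma \ref{VICC52} and continuity of $G$, the stopping set $\mathcal{D}$ is closed, so for $t_n \downarrow t \in [0, t_b)$ the accumulation point $(t, b(t+))$ of the sequence $(t_n, b(t_n)) \in \mathcal{D}$ also lies in $\mathcal{D}$, forcing $b(t+) \leq b(t)$; the reverse inequality is immediate from monotonicity (Proposition \ref{TDOTBC5}).

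For left-continuity on $[0, t_b]$ I would proceed by contradiction, as in step (ii) of Lemma \ref{BICOLLB}. Suppose $b(t-) < b(t)$ at some $t \in (0, t_b]$ and fix $y_* \in (b(t-), b(t)) \subset (0, L)$. By monotonicity, $b(s) \leq b(t-) < y_* < L$ for $s \in (t-\delta, t)$, so $(s, y_*) \in \mathcal{C}$ and the smooth-fit condition \eqref{FBPL23ee0} holds at $b(s)$. Combined with \eqref{FBPL2ee0} this yields
\begin{align*}
V(s, y_*) - G(s, y_*) = \int_{b(s)}^{y_*} \int_{b(s)}^{y} (V_{xx} - G_{xx})(s, z)\, dz\, dy,
\end{align*}
and using \eqref{FBPL1ee0} for $V$ together with Proposition \ref{PDEOZCL} for $Z$ one can rewrite
\begin{align*}
V_{xx} - G_{xx} = \frac{2}{\sigma^2 x^2}\bigl[\, r(V-G) - (V_t - G_t) - rx(V_x - G_x) - H(t, x)\,\bigr].
\end{align*}
On the curved trapezoid $\mathcal{R}$ with vertices $(s, b(s))$, $(s, y_*)$, $(t, b(t-))$, $(t, y_*)$, which lies in $\mathcal{C}$ and stays bounded away from the singularity $(L, T)$, I would argue $V_{xx} - G_{xx} \geq c > 0$ using $r(V-G) \geq 0$, $V_t - G_t \leq 0$ (by Proposition \ref{TMOVMGL}), the uniform positive lower bound on $-H = rKZ + \sigma^2 x^2 Z_x$, and control of the only ambiguously signed term $-rx(V_x - G_x)$ via smooth-fit ($V_x - G_x$ vanishes at $b(s)$ and remains bounded on $\mathcal{R}$). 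Letting $s \uparrow t$ and invoking the continuity of $V$ and $G$ yields $V(t, y_*) - G(t, y_*) \geq c(y_* - b(t-))^2/2 > 0$, contradicting $(t, y_*) \in \mathcal{D}$.

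The main obstacle will be establishing the uniform lower bound $V_{xx} - G_{xx} \geq c > 0$ on $\mathcal{R}$. Unlike in Lemma \ref{BICOLLB}, where $G$ is locally affine and $G_{xx} \equiv 0$ so that $V_{xx} > 0$ suffices, here $G$ carries the nontrivial multiplier $Z$ and the term $H$ contributes an extra obstruction that must be tracked. The crucial redeeming feature is that on $\mathcal{R}$ one has uniform upper bounds on $|Z_t|$ and a uniform positive lower bound on $Z_x$ (this is where being bounded away from $(L, T)$ is essential), so that the strictly positive contribution $-H$ dominates and the remaining terms can be absorbed.
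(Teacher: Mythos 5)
Your decomposition into the three regions, the right-continuity argument (closedness of $\mathcal{D}$ plus monotonicity), and the squeeze $B(t)\leq b(t)\leq K$ on $[t_*,T]$ giving $b(T-)=K$ are all sound; the last of these is in fact a clean shortcut. The problem is the core of the lemma: left-continuity on $[0,t_b]$, where $b(t)<L$ and $G$ carries the factor $Z$. Your identity
\begin{align*}
V_{xx}-G_{xx}=\frac{2}{\sigma^2 x^2}\bigl[\,r(V-G)-(V_t-G_t)-rx(V_x-G_x)-H\,\bigr]
\end{align*}
is correct, and the first two bracketed terms and $-H\geq l>0$ are indeed non-negative. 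But the term $-rx(V_x-G_x)$ has the \emph{wrong} sign: by the up/down-connectedness of $\mathcal{C}$ and $\mathcal{D}$ (exactly as used at the start of the proof of Lemma \ref{VICC52}), the map $x\mapsto(V-G)(t,x)$ is non-decreasing, so $V_x-G_x\geq 0$ and $-rx(V_x-G_x)\leq 0$ throughout $\mathcal{R}$. Your justification — that $V_x-G_x$ vanishes at $b(s)$ by smooth fit and ``remains bounded'' on $\mathcal{R}$ — does not yield the uniform bound $V_{xx}-G_{xx}\geq c>0$: a bounded negative term of unknown size can swallow the fixed positive contribution $l$ away from the boundary $b(s)$, since the width $y_*-b(s)$ of the trapezoid does not shrink. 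To ``absorb'' it you would need an a priori quantitative estimate such as $0\leq(V_x-G_x)(s,z)\leq M\,(z-b(s))$ with an explicit $M$ (itself requiring upper bounds on $V_{xx}-G_{xx}$, which is circular, or separate gradient estimates on $V$), together with a choice of $y_*$ close to $b(t-)$; none of this is supplied.

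This is precisely the obstruction that leads the paper to abandon the smooth-fit/$V_{xx}$ route here (see the remark preceding the proof) and to use instead De Angelis's test-function argument: multiply $V_t+\mathbb{L}_X V-rV=0$ by $\psi\in C_c^\infty([x_1,x_2])$ with $\psi(x_i)=\psi'(x_i)=0$, integrate over $(t,t_0)\times(x_1,x_2)$, pass $\mathbb{L}_X$ to its formal adjoint $\mathbb{L}_X^*$ acting on $\psi$, and use only the inequalities $0\leq V-G\leq C_3(t_0-s)^{\beta/2}$ and $G_t+\mathbb{L}_X G-rG=H\leq -l_\epsilon<0$ to reach $0\leq -l_\epsilon(t_0-t)+\tfrac{2\gamma}{2+\beta}(t_0-t)^{1+\beta/2}$, a contradiction as $t\uparrow t_0$. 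That argument never requires pointwise control of $V_x$ or $V_{xx}$ and never invokes smooth fit, which is why it closes where your pointwise approach stalls. As written, your proof of left-continuity on $[0,t_b]$ has a genuine gap at the step ``$V_{xx}-G_{xx}\geq c>0$ on $\mathcal{R}$''.
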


\begin{rem}
The essential tool to prove Lemma \ref{BICOLLB} is the \textit{smooth-fit} condition, which may at times either be difficult to verify or fail to hold. Following is a rather convenient technique to prove the continuity based on the regularity of functions $V$ and $G$ and the monotonicity of $b$. 
\end{rem}

\begin{proof}
To avoid using \textit{smooth-fit} condition, the same argument provided in \cite[Page 173-175]{Deangelis2015} can be applied. The right-continuity of $b$ on $[0, t_b]$ is immediate from the stopping set being closed and $b$ is increasing on $[0, t_b]$. The left-continuity of $b$ is proved by contradiction via assuming that there exists $t_0\in(0, t_b)$ so that a discontinuity occurs. Let $x_1, x_2$ be fixed such that $b(t_0-)<x_1<x_2<b(t_0)$, where the left limit of the boundary at $t_0$ always exists as $b$ is monotonically increasing in $(0, t_b)$, and for fixed $t'\in(0, t_0)$, define an open bounded domain $\mathcal{R}=\{ (t', t_0)\times(x_1, x_2) \}$ such that $\mathcal{R}\subset \mathcal{C}$. Rerunning the proof in \cite[Page 173-175]{Deangelis2015} and simply noticing that $H(t, x)$ in \eqref{tnnohH} is bounded by some constant $l_\epsilon$ in interval $[0, t_*]\times[x_1, x_2]$, we reach the same contradiction that the jump may not occur, see \cite[Pages 99-101]{ZhuoshuWu2023} for complete details.
\end{proof}

\subsection{The Optimal Stopping Rule} \label{TOSPRC5}

In order to prepare for the main result, we shall first verify the conditions for the change-of-variable formula to be applicable.

\begin{Lemma} \label{LSPTFFLGB}
Let $F(t, x)=e^{-r t} V(t, x)$ be defined on $[0, T)\times(0, \infty)$, 
\begin{align*}
&\mathcal{D}_1=\{(t, x)\in[0, t_b]\times(0, b(t))\}, \\
&\mathcal{D}_2=\{(t, x)\in[t_b, t^b]\times(0, L)\},\\
&\mathcal{D}_3=\{(t, x)\in[t^b, T]\times (L, b(t))\}. 
\end{align*}
Then, the function $F$ fulfils the following conditions:

(a) $F(t, x)$ is $C^{1, 2}$ on $\mathcal{C}\cup \mathcal{D}_1 \cup \mathcal{D}_2 \cup \mathcal{D}_3$;

(b) $F_t + \mathbb{L}_X F$ is locally bounded;

(c) $t\mapsto F_x(t, b(t)\pm)$ is continuous;

(d) $t\mapsto F_x(t, L\pm)$ is continuous;

(e) $F_{xx}=F_1+F_2$ on $\mathcal{C}\cup\mathcal{D}_1\cup\mathcal{D}_2\cup\mathcal{D}_3$, where $F_1$ is non-negative and $F_2$ is continuous.
\end{Lemma}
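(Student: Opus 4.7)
The plan is to verify conditions (a)--(e) in turn, exploiting the fact that the three stopping regions $\mathcal{D}_1$, $\mathcal{D}_2$, $\mathcal{D}_3$ have been chosen precisely to avoid the two singular lines $\{x=K\}$ and $\{x=L\}$ on which $G$ fails to be $C^{1,2}$, and in particular to stay away from the singular point $(T,L)$ of $Z_x$. Throughout I use $V = G = (K-x)\, Z(t,x)$ on each $\mathcal{D}_i$ (since $x < b(t) < K$ there) together with $F = e^{-rt} V$ to transfer smoothness and algebraic relations from $V$ and $G$ to $F$.

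Conditions (a) and (b) are essentially computational. Interior parabolic regularity applied to \eqref{FBPL1ee0} gives $V \in C^{1,2}(\mathcal{C})$, and on each $\mathcal{D}_i$ the representation $V = (K-x) Z$ combined with Proposition \ref{PDEOZCL} (smoothness of $Z$ on $[0,T]\times\{(0,L)\cup(L,\infty)\}$) yields the same regularity, since each $\mathcal{D}_i$ is disjoint from $\{x=L\}$ and contained in $\{x<K\}$. For (b), on $\mathcal{C}$ one has $F_t + \mathbb{L}_X F = e^{-rt}(V_t + \mathbb{L}_X V - rV) = 0$, while on each $\mathcal{D}_i$ the same computation gives $F_t + \mathbb{L}_X F = e^{-rt} H(t,x)$ with $H$ as in \eqref{tnnohH}, which is locally bounded on sets staying away from the singularity $(T,L)$.

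For (c) and (d), I invoke the smooth-fit analysis of Proposition \ref{TFBPFC5BCL}. When $b(t) \neq L$, \eqref{FBPL23ee0} yields $V_x(t, b(t)\pm) = G_x(t, b(t))$, and continuity in $t$ follows from the continuity of $b$ and of $G_x$ on its domain of differentiability. When $b(t) = L$, i.e.\ $t \in [t_b, t^b]$, equation \eqref{FBPL22ee0} gives $V_x(t, L\pm) = G_x(t, L\pm)$, and since $t^b < T$ the one-sided derivatives $Z_x(t, L\pm)$ are continuous there; continuity at the endpoints $t_b$ and $t^b$ comes from matching with the smooth-fit limits via continuity of $b$. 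For (d), the same trichotomy at the fixed level $L$ is combined with parabolic regularity on the portion $t < t_b$ (where $(t,L) \in \mathcal{C}$) and with $V_x(t,L\pm) = G_x(t,L\pm)$ on the portion $t > t^b$ (where $(t,L) \in \mathcal{D}$).

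For (e), the crux is substituting the PDE \eqref{PDEOZL} into $G_{xx} = -2 Z_x + (K-x) Z_{xx}$ to convert the potentially singular $Z_{xx}$ into a sign-definite expression. Using $Z_{xx} = -2(\sigma^2 x^2)^{-1}(Z_t + rx Z_x)$, one obtains on each $\mathcal{D}_i$
\begin{align*}
F_{xx}(t,x) = -2 e^{-rt} Z_x(t,x)\left(1 + \frac{r(K-x)}{\sigma^2 x}\right) - \frac{2 e^{-rt}(K-x)}{\sigma^2 x^2}\, Z_t(t,x),
\end{align*}
so that setting
\begin{align*}
F_1(t,x) := -\frac{2 e^{-rt}(K-x)}{\sigma^2 x^2} Z_t(t,x), \qquad F_2(t,x) := -2 e^{-rt} Z_x(t,x)\left(1 + \frac{r(K-x)}{\sigma^2 x}\right)
\end{align*}
gives $F_1 \geq 0$ (because $Z_t \leq 0$ by Corollary \ref{MAPOZwrttax} and $x < K$ on the stopping regions) while $F_2$ is continuous on each $\mathcal{D}_i$ (since $Z_x$ is continuous away from $(T,L)$); on $\mathcal{C}$ one simply takes $F_1 \equiv 0$ and $F_2 = F_{xx}$. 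The main obstacle throughout is condition (e): the singularity of $Z_x$ at $(T,L)$ precludes any naive bound on $F_{xx}$, and it is the combination of the PDE for $Z$ with the monotonicity $Z_t \leq 0$ that extracts a clean non-negative piece $F_1$ and leaves a residual $F_2$ that remains continuous by construction of the $\mathcal{D}_i$.
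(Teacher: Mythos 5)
Your proposal is correct and follows essentially the same route as the paper: conditions (a)--(b) by the explicit form of $F$ on $\mathcal{C}$ versus the $\mathcal{D}_i$, conditions (c)--(d) from the $C^{1,2}$ regularity of $V$ in $\mathcal{C}$, the smooth-fit relations and the continuity of $b$, and condition (e) by substituting the PDE for $Z$ into $G_{xx}$ so that the non-negative piece $F_1$ comes from $G_t=(K-x)Z_t\leq 0$ and the residual $F_2$ is the continuous $Z_x$ term. Your explicit formula for $F_{xx}$ on the stopping regions agrees with the paper's (up to an evident sign typo there), and your choice $F_1\equiv 0$, $F_2=F_{xx}$ on $\mathcal{C}$ is an equally valid variant of the paper's convexity remark.
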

\begin{proof}
See Appendix \ref{ALAPiv}.
\end{proof}

\begin{rem}
The change-of-variable formula is thus applicable to $F(t, x)=e^{-rt}V(t, x)$, see \cite{Peskir2005AC}.
\end{rem}

\begin{Theorem} \label{MTLGB}
(i) For $B(0)\geq L$, the optimal stopping boundary in problem \eqref{OSPFLR} can be characterised as the unique solution of the free-boundary equation:
\begin{align}
K-b(t)&=  \mathbb{E}_{t, b(t)} \left[ e^{-r(T-t)} G\left( T, X_{T} \right)+ r K \int_0^{T-t} e^{-ru} I \{ L<X_{t+u} \leq b(t+u) \} du \right.\nonumber\\
&\qquad\qquad\qquad  \left.+  \int_0^{T-t} e^{-ru} \left( rKZ + \sigma^2 X_{t+u}^2 Z_x\right) \left(t+u, X_{t+u}\right) I \{ X_{t+u} < L \} du   \right. \nonumber\\
&\qquad\qquad\qquad+ \left. \frac{1}{2} \int_0^{T-t} e^{-ru} (K-L) Z_x(t+u, L-) dl_u^L(X) \right], \label{RRFBLB}
\end{align} 
and the function $b:[0, T)\mapsto(0, K)$ is a continuous and increasing with $b(T-)=K$. The value function admits the following \textit{early exercise premium} representation:
\begin{align}
V(t, x) &= \mathbb{E}_{t, x} \left[ e^{-r(T-t)} G(T, X_T)+ r K \int_0^{T-t} e^{-ru} I \{ L<X_{t+u} \leq b(t+u) \} du \right. \nonumber\\
&\qquad\qquad\qquad \left.+  \int_0^{T-t} e^{-ru} \left( rKZ + \sigma^2 X_{t+u}^2 Z_x\right)\left(t+u, X_{t+u}\right) I \{ X_{t+u} < L \} du   \right. \nonumber\\
&\qquad\qquad\qquad+ \left. \frac{1}{2} \int_0^{T-t} e^{-ru} (K-L) Z_x(t+u, L-) dl_u^L(X) \right], \label{RRFVLB}
\end{align}
for all $(t, x)\in[0, T]\times(0, \infty)$.

(ii) For $B(0)<L$, the optimal stopping boundary in problem \eqref{OSPFLR} is given as $b(t)=b_1(t)\vee b_2(t)$, where $b_1(t)=x^*(t)$ for $t\in[0, t_*]$, with $x^*$ defined on $[0, t_*]$ and taken values in $(0, L]$, being continuous non-decreasing function defined in Lemma \ref{TMOGIX} and $b_2:[0, T)\mapsto(0, K)$ is the non-decreasing, continuous function with $b(T-)=K$ that is characterised as the unique solution of the following nonlinear integral equation\footnote{The computation of expectation of the local time term is on Appendix \ref{ALAP1} or see \cite[Page 1523]{DetempleandKitapbayev2021}, \cite[Page 9]{DetempleandKitapbayev2018}.}:
\begin{align}
&G\bigl(t, b_2(t)\bigr) = \mathbb{E}_{t, b_2(t)} \left[ e^{-r(T-t)} G(T, X_T) - \int_0^s e^{-ru} H\left( t+u, X_{t+u}\right) {I} \{ X_{t+u} < b_2(t+u)\} du \right] \nonumber\\
&\,\,\,- \frac{1}{2} \int_0^{ s } e^{-ru} {I}\{ b_2(t+u)>L \} \bigl( G_x\left( t+u, L+ \right) - G_x\left( t+u, L- \right) \bigr) d \mathbb{E}_{t, x} \left[l_u^L(X) \right]\nonumber\\
&\,\,\,- \frac{1}{2} \int_0^{ s } e^{-ru} {I}\{ b_2(t+u)=L \} \bigl( G_x\left( t+u, L+ \right) - G_x\left( t+u, L- \right) \bigr) d \mathbb{E}_{t, x} \left[l_u^L(X) \right]. \label{RRFBGB}
\end{align}
The value function admits the following \textit{early exercise premium} representation:
\begin{align}
&V(t, x) =  \mathbb{E}_{t, x} \bigg[  e^{-r(T-t)} G\left( T, X_{T} \right) - \int_0^{T-t} e^{-ru} H\left( t+u, X_{{t+u}} \right) {I} \{ X_{{t+u}} < b(t+u) \} du \bigg] \nonumber\\
&\,\,\,-\frac{1}{2} \int_0^{ T-t } e^{-ru} {I}\{ b(t+u)>L \} \bigl( G_x\left( t+u, L+ \right) - G_x\left( t+u, L- \right) \bigr) d \mathbb{E}_{t, x} \left[l_u^L(X) \right]\nonumber\\
&\,\,\,- \frac{1}{2} \int_0^{ T-t } e^{-ru} {I}\{ b(t+u)=L \} \bigl( G_x\left( t+u, L+ \right) - G_x\left( t+u, L- \right) \bigr) d \mathbb{E}_{t, x} \left[l_u^L(X) \right], \label{RRFVGB}
\end{align}
for all $(t, x)\in[0, T]\times(0, \infty)$.
\end{Theorem}
\begin{proof}
See Appendix \ref{ALAPiv}.
\end{proof}

\begin{figure}[h]
\centering
    \subfloat[]{{\includegraphics[width=.5\textwidth]{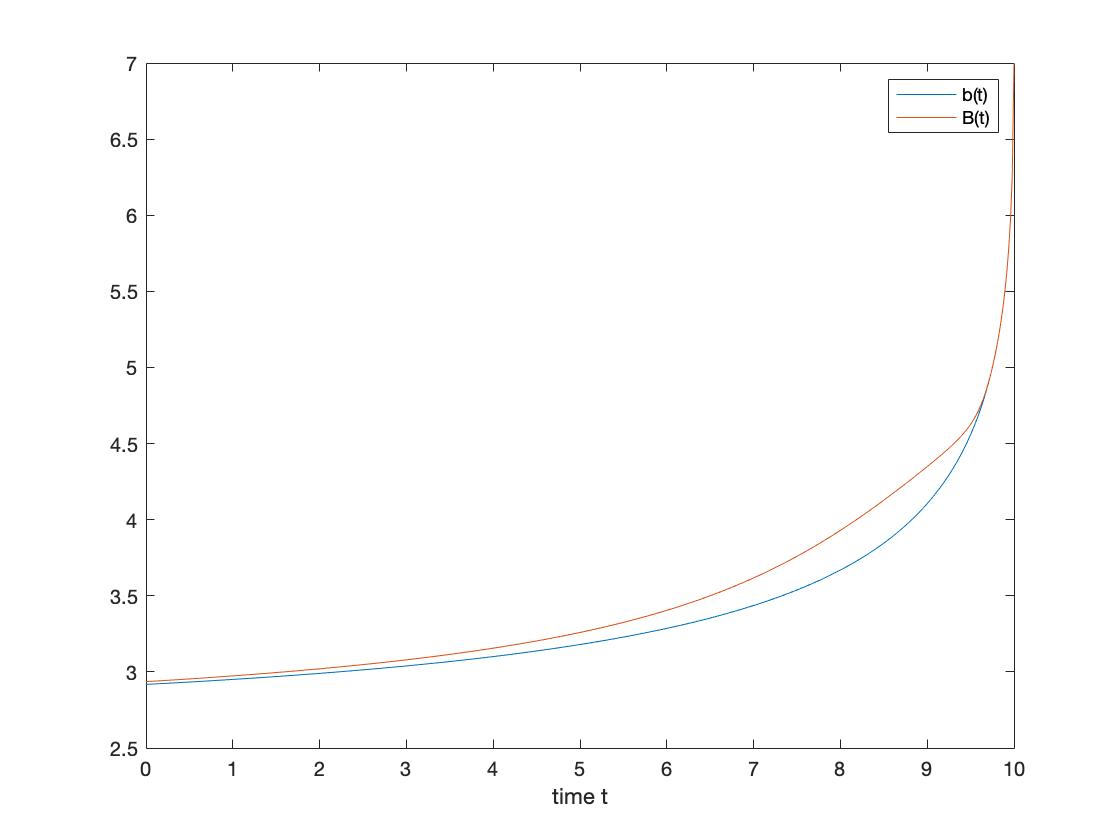}  \label{themapofbandBiBGL}}}%
    \subfloat[]{{\includegraphics[width=.5\textwidth]{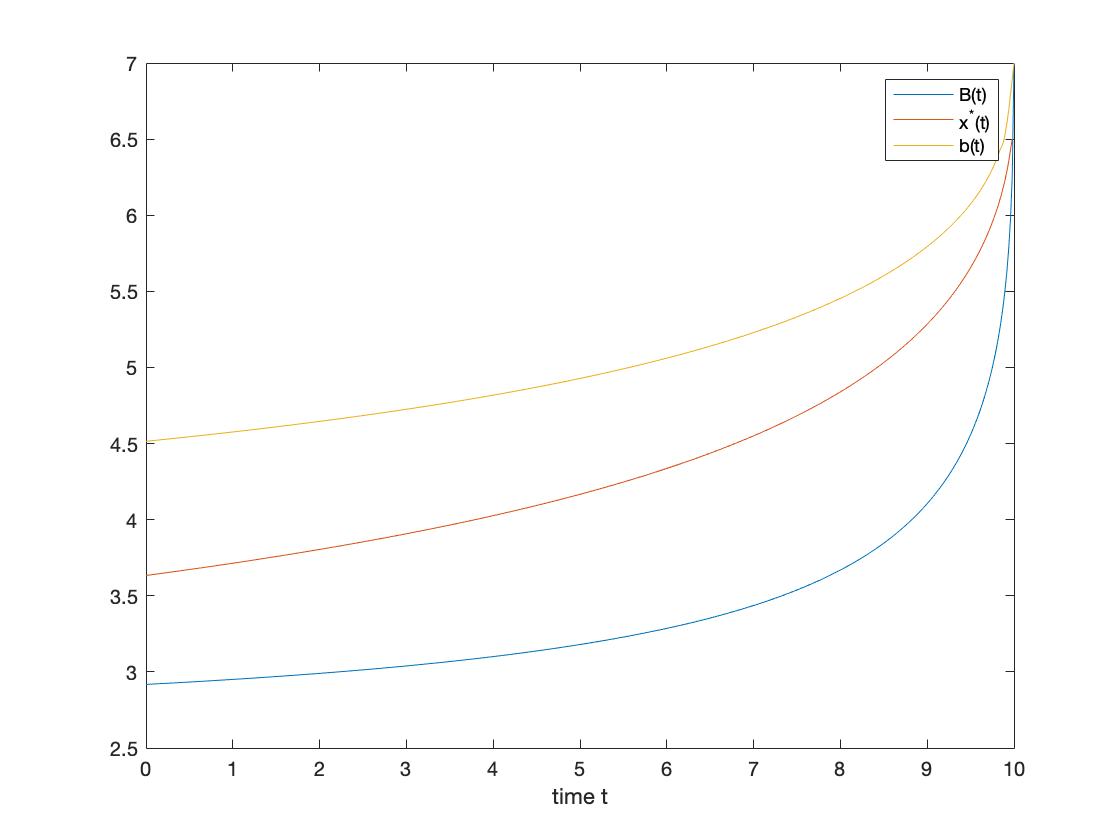}   \label{C5NR2} }}\\
    \subfloat[]{\includegraphics[width=.5\textwidth]{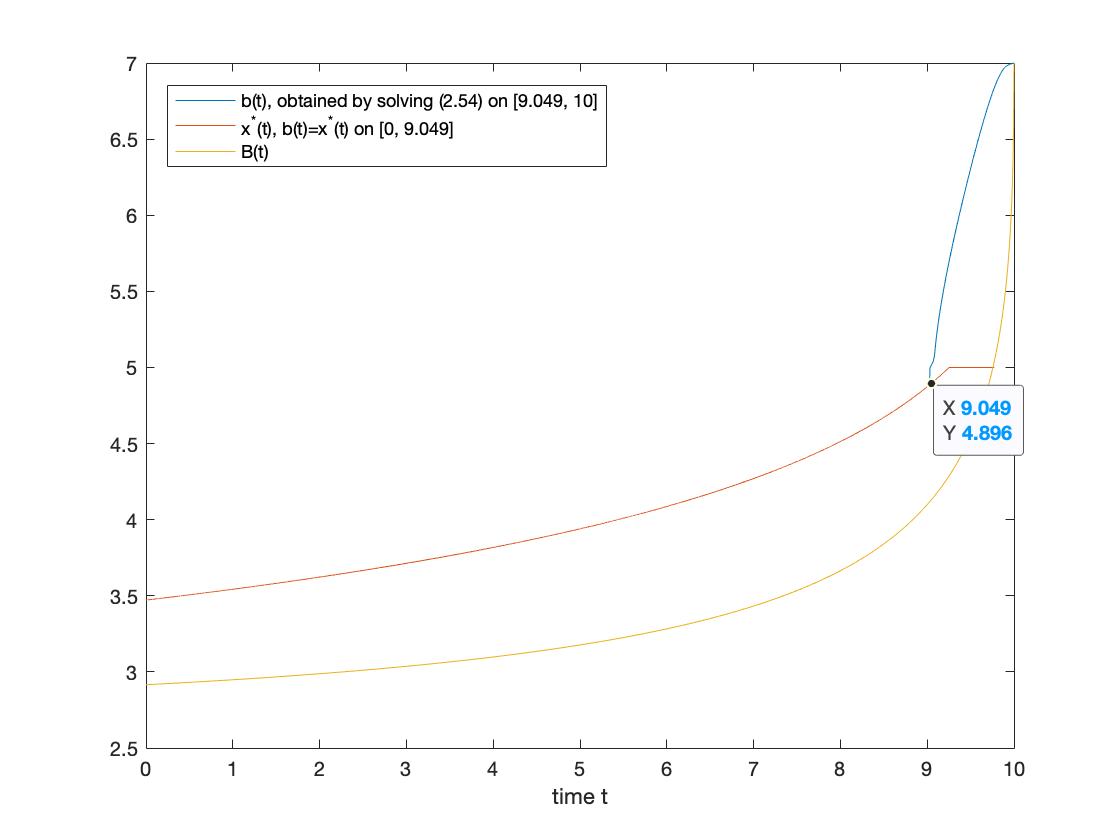}     \label{C5NR3}}%
    \caption{These figures display the maps $t\mapsto b(t)$ and $t\mapsto B(t)$ on $[0, T]$ with the same chosen parameters $r=0.05$, $\sigma=0.4$, $K=7$, $T=10$ but different $L$. In particular, figures (a), (b) and (c) are responding respectively to $L=2$, $L=6.5$ and $L=5$. The red lines in figures (b) and (c) display the function $t\mapsto x^*(t)$ defined in Lemma \ref{TMOGIX}.}%
    \label{OPSC4IFPICTURE}%
    \end{figure}

\newpage

\newpage
\vskip15pt
\begin{appendices}

\section{Auxiliary Results}\label{APFC5}

\subsection{Properties of the Az\'{e}ma Supermartingale }  \label{ALAP1}

For completeness, we include here use auxiliary computations related to the survival process.

\noindent {\bf Notations:}
\begin{align*}
& d_1=\frac{-\log{\frac{L}{x}}+\left( r-\frac{\sigma^2}{2} \right)(T-t)}{\sigma \sqrt{T-t}} \qquad \mathrm{and}\qquad d_2=\frac{-\log{\frac{L}{x}}-\left( r-\frac{\sigma^2}{2} \right)(T-t)}{\sigma \sqrt{T-t}}\\
& \alpha =\frac{2r}{\sigma^2}-1 \qquad \mathrm{and} \qquad  e^{-\frac{d_1^2}{2}}=\left(\frac{L}{x} \right)^\alpha e^{-\frac{d_2^2}{2}}.
\end{align*}


\noindent {\bf Derivatives with respect to time $t$:}
\begin{align}
& \frac{\partial}{\partial t} d_1=-\frac{1}{2} \sigma^{-1} (T-t)^{-\frac{3}{2}}   \log{\frac{L}{x}} -\frac{1}{2} \left(\frac{r}{\sigma}-\frac{\sigma}{2} \right) (T-t)^{-\frac{1}{2}},\nonumber \\
& \frac{\partial}{\partial t} d_2=-\frac{1}{2} \sigma^{-1} (T-t)^{-\frac{3}{2}}   \log{\frac{L}{x}} +\frac{1}{2} \left(\frac{r}{\sigma}-\frac{\sigma}{2} \right) (T-t)^{-\frac{1}{2}},\nonumber \\
& \frac{\partial}{\partial t} Z(t, x) =
\begin{cases}
 &-\sqrt{\frac{1}{2\pi}} \log{\left(\frac{L}{x}\right)} e^{-\frac{d_1^2}{2}} \sigma^{-1} (T-t)^{-\frac{3}{2}}, \qquad\qquad\,\, t<T,\\
 & 0, \qquad\qquad\qquad\qquad\qquad\qquad\qquad\qquad\qquad t=T.
  \end{cases}
  \label{A3}
\end{align}

\noindent {\bf The derivative with respect to $x$ for $(t, x)\in[0, T)\times(0, L)$}
\begin{align}
& \frac{\partial}{\partial x} d_1=\frac{\partial}{\partial x} d_2=x^{-1} \sigma^{-1} (T-t)^{-\frac{1}{2}}. \nonumber \\
& \frac{\partial}{\partial x} Z(t, x) = 
\begin{cases}
 \sqrt{\frac{2}{\pi}} e^{-\frac{d_1^2}{2}} x^{-1} \sigma^{-1} (T-t)^{-\frac{1}{2}} - \alpha \left(\frac{L}{x} \right)^\alpha x^{-1} \Phi(d_2),&x<L\\
 0, & x\geq L\,\, \text{or} \,\, t=T,
 \end{cases} \label{A4}\\
& \frac{\partial^2}{\partial x \partial t} Z(t, x) = \sqrt{\frac{1}{2\pi}} x^{-1} \sigma^{-1} (T-t)^{-\frac{3}{2}} e^{-\frac{d_1^2}{2}} \left( 1- \left( \log{\frac{L}{x}} \right)^2 (T-t)^{-1} \sigma^{-2} + \log{\frac{L}{x}} \sigma^{-1} \left( \frac{r}{\sigma} - \frac{\sigma}{2} \right) \right)\nonumber 
\end{align}

\noindent {\bf Bounds on the derivative with respect to $x$ for $(t, x)\in[0, t_\delta)\times(0, \infty)$ } 

\begin{align}
0 \leq \frac{\partial}{\partial x } Z(t, x) & =  \sqrt{\frac{2}{\pi}} e^{-\frac{d_1^2}{2}} x^{-1} \sigma^{-1} (T-t)^{-\frac{1}{2}} - \alpha \left(\frac{L}{x} \right)^\alpha x^{-1} \Phi(d_2) \label{A5}\\
&< \sqrt{\frac{2}{\pi}} \sigma^{-1} (T-t_\delta)^{-\frac{1}{2}} x^{-1} e^{-\frac{\left( \frac{-\log{\frac{L}{x}}-\left( r-\frac{\sigma^2}{2} \right)T}{\sigma \sqrt{T-t_\delta}} \right)^2}{2}} - \alpha L^{\alpha} x ^{-\frac{2r}{\sigma^2}} \Phi(d_2)=C_2 < \infty. \nonumber
\end{align}
Note that by L'H\^{o}pital's rule (and that $d_2$ is a function of $x$ and $\frac{2r}{\sigma^2} < 1$, $\lim\limits_{x\infty} \frac{x}{e^x} \to 0$):
\begin{align*}
\lim_{x\to0} \frac{\Phi(d_2)}{x^{\frac{2r}{\sigma^2}}} = \lim_{x\to0} \frac{\sqrt{\frac{1}{{2\pi}}}e^{-\frac{d_2^2}{2}}x^{-1} \sigma^{-1} (T-t)^{-\frac{1}{2}}}{\frac{2r}{\sigma^2}x^{\frac{2r}{\sigma^2}-1}} = \sqrt{\frac{1}{2\pi}} \frac{\sigma}{2r} (T-t)^{-\frac{1}{2}} \lim_{x\to0} \frac{e^{-\frac{d_2^2}{2}}}{x^{\frac{2r}{\sigma^2}}} \to 0
\end{align*}

\noindent {\bf The derivative w.r.t time $t$ for $x<L, t<T$:}
\begin{align*}
-rK Z_t - \sigma^2 x^2 Z_{xt}=\sqrt{\frac{1}{2\pi}} e^{-\frac{d_1^2}{2}}  (T-t)^{-\frac{3}{2}} \left( \frac{ r (K-x) \log{\frac{L}{x}}}{\sigma} -\sigma x + \sigma x \left( \frac{ \log{\frac{L}{x}}}{\sigma \sqrt{T-t}} \right)^2 + \frac{\sigma x}{2} \log{\frac{L}{x}} \right)
\end{align*}
and note that the assumption $-rKZ_t-\sigma^2 x^2 Z_{xt}\leq0$ on set $\mathcal{S}$, together with $Z_t\leq0$, suggests that $Z_{xt}\geq\frac{-rKZ_t}{\sigma^2 x^2}\geq0$.

\noindent{\bf The expectation of the local time:}
\begin{align*}
&\lim_{\epsilon\to0} \frac{1}{\epsilon} \mathbb{P}\left( \log{ \frac{L-\epsilon}{y} } <  \left(r-\frac{\sigma^2}{2}\right) u + \sigma W_u  <   \log{\frac{L+\epsilon}{y}}  \right) = \frac{1}{\sigma \sqrt{u} L}\phi\left( \frac{\log{\frac{L}{y}-\left( r-\frac{\sigma^2}{2} \right)u}}{\sigma\sqrt{u}} \right).
\end{align*}
\begin{align*}
d \mathbb{E}_{t, x} \left[ l_u^L\right] &=\sigma^2 L^2 \lim_{\epsilon\to0} \frac{1}{\epsilon} \mathbb{P}\left( \log{ \frac{L-\epsilon}{x} } <  \left(r-\frac{\sigma^2}{2}\right) u + \sigma W_u  <   \log{\frac{L+\epsilon}{x}}  \right)du \\
&= \frac{\sigma L}{\sqrt{u} }\phi\left( \frac{\log{\frac{L}{x}-\left( r-\frac{\sigma^2}{2} \right)u}}{\sigma\sqrt{u}} \right) du,
\end{align*}
where $\phi(z)=\frac{1}{\sqrt{2\pi}} e^{-\frac{z^2}{2}}$ is the probability density function of the standard normal law.

\subsection{Proofs for Section \ref{TFBPC5}} \label{ALAP}

\begin{proof}[{\bf Proof of Lemma \ref{VILSCC5}}]
As in \cite[Page 991]{DuToitandPeskir2009}, by the dominated convergence theorem, the continuity of the gain function $G:[0,T)\times (0, \infty)\mapsto[0, K]$ and the continuity of the flow $x\mapsto X_{t+\tau}^x$, we see the map $(t, x)\mapsto \mathbb{E}_{t, x} \left[ e^{-r\tau} G\left(t+\tau, X_{t+\tau}\right) \right]$  is continuous and thus lower semicontinuous for every stopping time $\tau$ taking values on $[0, T-t)$. Using the fact that the supremum of an l.s.c function define an l.s.c function (see \cite[Remark 2.10, Page 48]{PeskirandShiryaev} and \cite[Page 8]{DuToitandPeskir2009}), Lemma \ref{VILSCC5} follows.
\end{proof}

\begin{proof}[{\bf Proof of Lemma \ref{tauDisoptimal}}] 
Let $\bar{\mathcal{D}}=\mathcal{D}\setminus\{(T, x): x\in(0, \infty)\}$ and \[\bar{\tau}_D=\inf\{s\in[0, T-t): (t+s, X_{t+s}^x)\in\bar{\mathcal{D}}\},\]
where by convention we set the infimum of the empty set to be infinite. Then note that function $G$ is continuous (thereby u.s.c) on $[0, T)\times(0, \infty)$, which, together with Lemma \ref{VILSCC5}, yield that $\bar \tau_D$ is optimal for the following problem
\begin{align*}
\bar{V}(t, x)=\sup_{\tau\in[0, T-t)} \mathbb{E}_{t, x} \left[ e^{-r\tau} G\left(t+\tau, X_{t+\tau}\right) \right].
\end{align*}
That is $\bar{V}(t, x) = \mathbb{E}_{t, x} \left[ e^{-r\bar{\tau}_D} G\left(t+\bar{\tau}_D, X_{t+\bar{\tau}_D}\right) \right]$ where the corresponding stopping set is $\bar{\mathcal{D}}$. Furthermore, by letting $\tilde{\mathbb{E}}_{t, x}\left[ e^{-r\tau} G\left(t+\tau, X_{t+\tau}\right)  \right] = \sup\limits_{\tau\in[0, T-t]}\mathbb{E}\left[ e^{-r\tau} G\left(t+\tau, X_{t+\tau}\right)  \right]$, observe that
\begin{align*}
V(t, y)&=\sup_{\tau\in[0, T-t]} \mathbb{E}_{t, x} \left[ e^{-r\tau} G\left(t+\tau, X_{t+\tau}\right) \right] \\
&=\sup_{\tau\in[0, T-t]} \mathbb{E}_{t, x} \left[ e^{-r\tau} G\left(t+\tau, X_{t+\tau}\right) I\{\tau<T-t\} + e^{-r(T-t)} G\left(T, X_{T}\right) I\{\tau=T-t\} \right]\\
&=\max\bigg\{ \sup_{\tau\in[0, T-t)} \mathbb{E}_{t, x} \left[ e^{-r\tau} G\left(t+\tau, X_{t+\tau}\right) \right],  \tilde{\mathbb{E}}_{t, x} \left[ e^{-r(T-t)} G\left(T, X_{T}\right)\right] \bigg\}\\
&=\max\bigg\{ \tilde{\mathbb{E}}_{t, x} \left[ e^{-r\bar{\tau}_D} G\left( t+\bar{\tau}_D, X_{t+\bar{\tau}_D} \right) \right], \tilde{\mathbb{E}}_{t, x} \left[ e^{-r(T-t)} G\left(T, X_{T}\right)\right] \bigg\},
\end{align*}
which, together with Corollary \ref{MAPOZwrttax}, implies that
\begin{align*}
V(t, y)=
\begin{cases}
&\mathbb{E}_{t, x} \left[ e^{-r\bar{\tau}_D} G\left( t+\bar{\tau}_D, X_{t+\bar{\tau}_D} \right) \right], \,\,\bar{\tau}_D<T-t,\\
&\mathbb{E}_{t, x} \left[ e^{-r(T-t)} G\left(T, X_{T}\right)\right],\qquad\,\,\, \bar{\tau}_D>T-t,
\end{cases}
\end{align*}
which implies that $\tau_D=\bar{\tau}_D\wedge (T-t)$, and the desired claim follows.
\end{proof}

\subsection{Proofs for Section \ref{TCASSC5B0GL}} \label{ALAPii}

\begin{proof}[{\bf Proof of Corollary \ref{GIUCIZL}}]
On the state space $[0, T-t]\times[L,\infty)$, $G(t, x)=(K-x)^+$. Now suppose that there exists a $\delta>0$ such that for all $(t_1, x_1), (t_2, x_2)\in[0, T-t]\times [L, \infty)$, $|t_1-t_2| + |x_1-x_2|<\delta$. Then, $|G(t_1, x_1) - G(t_2, x_2)|\leq |x_1-x_2|$ and by choosing $\epsilon = \delta$, we have $|G(t_1, x_1) - G(t_2, x_2)|<\epsilon$ for all $|x_1-x_2|<\delta$. Therefore, the uniform continuity follows.
\end{proof}

\begin{proof}[{\bf Proof of Lemma \ref{VICC51}}]
By the definition of the stopping set $\mathcal{D}$ and Lemma \ref{SSINEi} for $t_*=0$, the continuity of the value function follows directly from that of the gain function on the state space $[0, T)\times(0, B(t)]$. It then remains to show that on the state space $[0, T)\times[B(t), \infty)$, (i) the map $t\mapsto V(t, x)$ is continuous on $[0, T)$ for each fixed $x\in[B(t), \infty)$ and (ii) the map $x\mapsto V(t, x)$ is continuous on $[B(t), \infty)$, uniformly in $t\in[0, T)$.

First, Take any $t_1<t_2$ in $[0, T)$, let $\epsilon>0$ and $\tau_1^\epsilon$ be the stopping time such that
\begin{align*}
\mathbb{E}_{t_1, x} \left[ e^{-r\tau_1^\epsilon} G\left(t_1+\tau_1^\epsilon, X_{t_1+\tau_1^\epsilon}\right)\right] \geq V(t_1, x)-\epsilon.
\end{align*}
Then, by setting $\tau_2^\epsilon=\tau_1^\epsilon \wedge (T-t_2)$, we have
\begin{align*}
\mathbb{E}_{t_2, x} \left[ e^{-r\tau_2^\epsilon} G\left(t_2+\tau_2^\epsilon, X_{t_2+\tau_2^\epsilon}\right)\right] \leq V(t_2, x).
\end{align*}
and that
\begin{align}
&0\leq V(t_1, x)-V(t_2, x) \nonumber\\
&\leq   \mathbb{E} \left[ e^{-r\tau_1^\epsilon} G\left(t_1+\tau_1^\epsilon, X_{t_1+\tau_1^\epsilon}^x\right)\right] - \mathbb{E}\left[ e^{-r\tau_2^\epsilon} G\left(t_2+\tau_2^\epsilon, X_{t_2+\tau_2^\epsilon}^x\right)\right]  + \epsilon\nonumber\\
&\leq \mathbb{E} \left[ e^{-r\tau_2^\epsilon} \left( G\left(t_1+\tau_1^\epsilon, X^x_{\tau_1^\epsilon}\right) - G\left(t_2+\tau_2^\epsilon, X^x_{\tau_2^\epsilon}\right)\right)\right] + \epsilon\nonumber\\
&\leq \mathbb{E}\left[ G\left(t_1+\tau_1^\epsilon, X^x_{\tau_1^\epsilon}\right) - G\left(t_2+\tau_2^\epsilon, X^x_{\tau_2^\epsilon}\right) \right] + \epsilon \nonumber \\
&\leq \mathbb{E}\left[ \left( X^x_{\tau_2^\epsilon}-X^x_{\tau_1^\epsilon} \right)^+ Z\left( t_1+\tau_1^\epsilon, X^x_{\tau_1^\epsilon} \right) \right.\nonumber\\
&\qquad\,\,\,\, +\left. \left(K-X^x_{\tau_2^\epsilon}\right)^+ \left(  Z\left( t_1+\tau_1^\epsilon, X^x_{\tau_1^\epsilon} \right) -  Z\left( t_2+\tau_2^\epsilon, X^x_{\tau_2^\epsilon} \right) \right) \right] + \epsilon, \label{VCLME}
\end{align}
where the first inequality is because of the map $t\mapsto V(t, x)$ being decreasing and the last inequality holds via
\begin{align}
(K-y)^+ - (K-z)^+ \leq (z-y)^+  \qquad \text{for $y, z\in \mathbb{R}$}. \label{IEQVCL}
\end{align}
By letting $t_2-t_1\to 0$, using $\tau_1^\epsilon -  \tau_2^\epsilon\to 0$ (this is due to the definition of $ \tau_2^\epsilon=\tau_1^\epsilon\wedge(T-t_2)$ and the definition of maturity time s.t. $\tau_1^\epsilon\leq T-t_1$) and then $\epsilon\to0$ in \eqref{VCLME}, the dominated convergence theorem allows us to conclude that
\begin{align*}
V(t_1, x) - V(t_2, x) \to 0,
\end{align*}
and assertion (i) follows.

Next, for any $x_1<x_2$ in $[B(t), \infty)$, by up-down connectedness of $\mathcal{C}$ and $\mathcal{D}$, we have
\begin{align*}
G(t, x_2) - G(t, x_1) \leq V(t, x_2) - V(t, x_1),
\end{align*}
and let $\tau_2$ be optimal for $V(t, x_2)$ such that 
\begin{align}
G(t, x_2) - G(t, x_1) & \leq V(t, x_2) - V(t, x_1) \nonumber\\
& \leq \mathbb{E}\left[ e^{-r\tau_2} G(t+\tau_2, X_{\tau_2}^{x_2}) \right] - \mathbb{E}\left[ e^{-r\tau_2} G(t+\tau_2, X_{\tau_2}^{x_1})  \right].\label{VICC51P}
\end{align}

In addition, in the case $B(t)>L$ for $t\in[0, T]$, we know there exists a $\delta>0$ such that $B(t)>L+\delta>L$ (see figure \ref{TYBC51}). Now we can show that by the optimality of $\tau_2$ and the monotonicity of $B$, $X_{\tau_2}^{x_2}\geq B(t+\tau_2)\geq B(t) > L+\delta$ almost surely, which in turn implies that $X_{\tau_2}^{x_1} >L$ almost surely for $0<x_2-x_1<\frac{B(0)\delta}{2L}\leq\frac{x_1\delta}{2L}$. More precisely, by the strong solution of GBM and the optimality of $\tau_2$, the inequality $X_{\tau_2}^{x_2}>L+\delta$ entails 
\[e^{\left(r-\frac{\sigma^2}{2}\right)\tau_2+\sigma W_{\tau_2}}>\frac{L+\delta}{x_2},\] 
and that for $x_2\in \left(0, x_1\left( 1+{\delta}/{(2L)} \right)\right)$,
\begin{align*}
X_{\tau_2}^{x_1} = x_1 e^{\left(r-\frac{\sigma^2}{2}\right)\tau_2+\sigma W_{\tau_2}} > \frac{x_1 (L+\delta)}{x_2}  > \frac{x_1(L+\delta)}{x_1 \left( 1+\delta/(2L) \right)} = L + \frac{\delta L}{\delta + 2L} > L,
\end{align*}
after which, it follows that as $Z(t+\tau_2, X_{\tau_2}^{x_1})=Z(t+\tau_2, X_{\tau_2}^{x_2})=1$ for $X_{\tau_2}^{x_2}\geq X_{\tau_2}^{x_2}>L$,
\begin{align*}
G(t, x_2) - G(t, x_1)& \leq V(t, x_2) - V(t, x_1)\\
& \leq  \mathbb{E}\Big[ e^{-r\tau_2} \left(K- X_{\tau_2}^{x_2}\right)^+ \Big] - \mathbb{E}\Big[ e^{-r\tau_2} \left(K-X_{\tau_2}^{x_1}\right)^+  \Big]\\
& \leq (x_1-x_2)^+ \mathbb{E}\left[ e^{-\frac{\sigma^2}{2}\tau_2 + \sigma W_{\tau_2}} \right] = (x_1-x_2)^+.
\end{align*}
This, combining with Corollary \ref{GIUCIZL}, tells us that given $\epsilon>0$, a $\delta'=\min\big\{\epsilon, \frac{B(0)\delta}{2L}\big\}$ can be chosen such that $|x_1-x_2|<\delta'$ implies $|V(t, x_1)-V(t, x_2)|<\epsilon$. and assertion (ii) follows. 
Then, by letting $0<x_1<x_2$ and $0\leq t_1<t_2 < T$, we have
\begin{align*}
|V(t_1, x_1) - V(t_2, x_2)|\leq |V(t_1, x_1) - V(t_2, x_1)| + |V(t_1, x_1) - V(t_1, x_2)|
\end{align*}
where the first term of the right-hand side converges to zero as $t_1\to t_2$ by assertion (i) and the second term converges to zero as $x_1\to x_2$ by assertion (ii) as $\epsilon$ is independent of $t$.
\end{proof}

\subsection{Proofs for Section \ref{TCASSC5B0LL}} \label{ALAPiii}

\begin{proof}[{\bf Proof of Lemma \ref{TMOGIX}}](i) To show that $G$ is uniformly continuous on $[0, t_*]\times(0, L]$, it is sufficient to show that for every $\epsilon>0$, there exists a $\delta>0$ so that for all $(t_1, x_1), (t_2, x_2)\in[0, t_*]\times(0, L]$, $\lvert t_1- t_2\rvert + \lvert x_1- x_2\rvert <\delta $ implies $\lvert G(t_1, x_1)- G(t_2, x_2)\rvert<\epsilon$.

To this end, we notice that
\begin{align*}
\lvert G(t_1, x_1)- G(t_2, x_2)\rvert &\leq \lvert x_2- x_1 \rvert Z(t_1, x_1) + \lvert K- x_2 \rvert \lvert Z(t_1, x_1)- Z(t_2, x_2)\rvert\\
&< \lvert x_2- x_1 \rvert + K \lvert Z(t_1, x_1)- Z(t_2, x_2)\rvert.
\end{align*}
Then we observe that on $[0, t_*]\times(0, L]$, 
\begin{align*}
\lvert Z(t_1, x_1)- Z(t_2, x_2)\rvert &= \lvert Z(t_1, x_1)- Z(t_2, x_1) + Z(t_2, x_1) - Z(t_2, x_2)\rvert\\
&\leq  \lvert Z(t_1, x_1)- Z(t_2, x_1) \rvert + \lvert Z(t_2, x_1) - Z(t_2, x_2)\rvert\\
&\leq M \lvert t_1 - t_2 \rvert + N  \lvert x_1 - x_2 \rvert, 
\end{align*}
where the second inequality holds true by the mean value theorem and the fact that the partial derivatives of $Z_x(t, x)$ and $Z_t(t, x)$ are bounded, by constants $N$ and $M$, on the set $[0, t_*]\times(0, L]$ (see \eqref{A3}, \eqref{A4}).  Hence,
\begin{align*}
\lvert G(t_1, x_1)- G(t_2, x_2)\rvert<  K  M \lvert t_1 - t_2 \rvert + ( K N + 1) \lvert x_1 - x_2 \rvert, 
\end{align*}
and we can now choose $\delta=\frac{\epsilon}{\max\{KM, KN+1\}}$ and verify that if $(t_1, x_1), (t_2, x_2)\in [0, t_*]\times(0, L]$ satisfy $\lvert t_1- t_2\rvert + \lvert x_1- x_2\rvert <\delta$, then
\begin{align*}
\lvert G(t_1, x_1)- G(t_2, x_2)\rvert < \epsilon,
\end{align*}
thus proving statement (i) as desired.

(ii) The uniform continuity of $G$ means that for every $\epsilon>0$, there exists a $\delta>0$ such that for all $t_1, t_2 \in [0, t_*]$ and $x\in(0, L]$, 
\begin{align*}
\text{$\lvert t_1 - t_2 \rvert < \delta \,\,$ 
 implies  $\,\, \lvert G(t_1, x) - G(t_2, x) \rvert < \epsilon$ },
\end{align*}
which, after rearranging, equals
\begin{align*}
G(t_2, x) - \epsilon  \leq G(t_1, x) \leq G(t_2, x) + \epsilon,
\end{align*}
so we must have, 
\begin{align*}
\max_{x\in(0, L]}  G(t_1, x)&\geq \max_{x\in(0, L]} G(t_2, x)-\epsilon,\\
\max_{x\in(0, L]}  G(t_1, x)&\leq \max_{x\in(0, L]} G(t_2, x)+\epsilon,
\end{align*}
after which, the conclusion follows from
\begin{align*}
\lvert  \max_{x\in(0, L]}  G(t_1, x) - \max_{x\in(0, L]}  G(t_2, x) \rvert < \epsilon.
\end{align*}

(iii) Assume, by contradiction, that statement (iii) is false.
To say that, $t\mapsto x^*(t)$ is not continuous on $[0, t_*]$ means that there exists a sequence $(t_n)\subseteq [0, t_*]$ where $(t_n)\to t$ 
such that $x^*(t_n)$ does not converge $x^*(t)$. According to statement (ii) and the definition of $x^*(t)$, we have
\begin{align*}
\lim_{t_n\to t} \max_{x\in(0, L]}G(t_n, x) \to  \max_{x\in(0, L]}G(t, x) = G (t, x^*(t)),
\end{align*}
but the assumption is 
\begin{align*}
\lim_{t_n\to t} \max_{x\in(0, L]}G(t_n, x) =  \max_{x\in(0, L]}G(t, x) = G\left(t, \lim_{t_n\to t} x^*(t_n)\right) \neq G (t, x^*(t)),
\end{align*}
which is a contradiction, we therefore conclude that statement (iii) holds true.
\end{proof}

\subsection{Proofs for Section \ref{MTLGB}} \label{ALAPiv}
\begin{proof}[{\bf Proof of Lemma \ref{LSPTFFLGB}}]
By the definition of $F$, 
\begin{align*}
F(t, x) = 
\begin{cases}
e^{-rt} V(t, x),&(t, x)\in\mathcal{C},\\
e^{-rt} G(t, x),&(t, x)\in\mathcal{D}_1\cup\mathcal{D}_2\cup\mathcal{D}_3,
\end{cases}
\end{align*}
statement (a) is immediate. 

To prove statement (b), it suffices to show that $F_t + \mathbb{L}_X F$ is locally bounded on $(\mathcal{C}\cup \mathcal{D}_1 \cup \mathcal{D}_2\cup \mathcal{D}_3)\cap\mathcal{K}$ for each compact set $\mathcal{K}$ in $[0, T]\times(0, \infty)$, see \cite[Page 427]{PeskirandShiryaev}. Since
\begin{align*}
F_t + \mathbb{L}_X F=
\begin{cases}
0,  & (t, x)\in\mathcal{C},\\
-r K,  & (t, x)\in \mathcal{D}_3,\\
(-r K Z - \sigma^2 x^2 Z_x)(t, x),   & (t, x)\in\mathcal{D}_1 \cup \mathcal{D}_2 ,
\end{cases}
\end{align*}
where the last case is locally bounded as it is a continuous function on $\{\mathcal{D}_1 \cup \mathcal{D}_2 \} \cap \mathcal{K}$.

The maps $t \mapsto F_x(t, b(t)+)$ for $t\in[0, T)$ and $t\mapsto F_x(t, L\pm)$ for $t\in[0, t_b]$ are continuous as the result of the fact that $V$ is $C^{1, 2}$ on the continuation set $\mathcal{C}$, see \cite[Page 8]{DetempleandKitapbayev2018} and \cite[Page 131]{PeskirandShiryaev}. Moreover, the map $t\mapsto F_x(t, b(t)-)$ is continuous as $F_x(t, b(t)-)=e^{-rt} G_x(t, b(t)-)$ for $t\in[0, T]$ and the map $t\mapsto b(t)$ is continuous, which, together with $F_x(t, L\pm) = e^{-rt} G_x(t, L\pm)$ for $t\in[t^b, T]$, justifies statements (c) and (d).

Finally, statement (e) follows via
\begin{align*}
F_{xx}(t, x)=
\begin{cases}
\frac{2e^{-rt}}{\sigma^2 x^2} \left( r V - r x V_x + V_t \right)(t, x), & (t, x)\in\mathcal{C},\\
0, & (t, x)\in \mathcal{D}_3,\\\
\frac{2e^{-rt}}{\sigma^2 x^2} \left( -G_t + rx G_x - \sigma^2 x^2 Z_x - rx Z \right) (t, x),  & (t, x)\in\mathcal{D}_1 \cup \mathcal{D}_2,
\end{cases}
\end{align*}
where the second case indicates that $F$ is convex on $\mathcal{C}$ (both convexity and concavity, see \cite[Page 526]{Peskir2005AC}), which is a stronger condition than (e), the first case is immediate from the value function being non-negative and $C^{1, 2}$; and in the third case, $G_t(t, x)\leq0$ and the remaining terms are continuous.
\end{proof}

\begin{proof}[{\bf Proof of Theorem \ref{TMOGIX}}]
We follow the argument in \cite[Pages 386-391]{PeskirandShiryaev}. The proofs of \eqref{RRFBGB} and \eqref{RRFVGB} being similar, we confine ourselves to proving \eqref{RRFBLB} and \eqref{RRFVLB}. Since the function $F$ fulfils the conditions on Lemma \ref{LSPTFFLGB}, an application of change-of-variable formula yields
\begin{align}
& e^{-rs} V\left( t+s, X_{t+s}^x \right) = V(t, x) +M_s \nonumber\\
&\qquad\qquad\,\,\, + \int_0^s e^{-ru} \left( -r V + V_t + \mathbb{L}_X V \right)\left( t+u, X_{t+u}^x \right) {I} \{ X_{t+u}^x \neq b(t+u), X_{t+u}^x\neq L \} du \nonumber\\
&\qquad\qquad\,\,\,+ \frac{1}{2} \int_0^s e^{-ru} \bigl( V_x\left( t+u, L+ \right) - V_x\left( t+u, L- \right) \bigr) dl_u^L(X^x)\nonumber\\
&\qquad\qquad\,\,\,+\frac{1}{2} \int_0^s e^{-ru} \bigl( V_x\left( t+u, b(t+u)+ \right) - V_x\left( t+u, b(t+u)- \right) \bigr) dl_u^b(X^x)\nonumber\\
&\qquad=V(t, x) +M_s\nonumber \\
&\qquad\qquad\,\,\, + \int_0^s e^{-ru} \left( -r G + G_t + \mathbb{L}_X G \right)\left( t+u, X_{t+u}^x \right) {I} \{ X_{t+u}^x < b(t+u), X_{t+u}^x\neq L \} du \nonumber\\
&\qquad\qquad\,\,\,+\frac{1}{2} \int_0^{ s } e^{-ru} {I}\{ b(t+u)>L \} \bigl( G_x\left( t+u, L+ \right) - G_x\left( t+u, L- \right) \bigr) dl_u^L(X^x)\nonumber\\
&\qquad\qquad\,\,\,+\frac{1}{2} \int_0^{ s } e^{-ru} {I}\{ b(t+u)=L \} \bigl( V_x\left( t+u, L+ \right) - V_x\left( t+u, L- \right) \bigr) dl_u^L(X^x), \label{TRPLGB}
\end{align}
where $M_s=\int_0^s e^{-ru} \sigma X_{t+u} V_x\left( t+u, X_{t+u} \right) {I} \{ X_{t+u}^x \neq b(t+u), X_{t+u}^x\neq L \} dW_u$ is a martingale under measure $P_{t, x}$ for each $s\in[0, T-t]$, and the second equality is due to the fact that the \textit{smooth-fit} condition fails as $b(t)=L$ and the gain function is not smooth as $b(t)>L$ for $t\in[0, T]$.

Then, upon taking the expectation under measure $P_{t, x}$ of equation \eqref{TRPLGB} and invoking the optional sampling theorem, we obtain
\begin{align}
& \mathbb{E}_{t, x} \left[e^{-rs} V\left( t+s, X_{t+s} \right)\right) = V(t, x) +  \mathbb{E}_{t, x} \left( \int_0^s e^{-ru} H\left( t+u, X_{t+u}\right) {I} \{ X_{t+u} < b(t+u)\} du \right] \nonumber\\
&\qquad+ \frac{1}{2} \int_0^{ s } e^{-ru} {I}\{ b(t+u)>L \} \bigl( G_x\left( t+u, L+ \right) - G_x\left( t+u, L- \right) \bigr) d \mathbb{E}_{t, x} \left[l_u^L(X) \right]\nonumber\\
&\qquad+ \frac{1}{2} \int_0^{ s } e^{-ru} {I}\{ b(t+u)=L \} \bigl( V_x\left( t+u, L+ \right) - V_x\left( t+u, L- \right) \bigr) d \mathbb{E}_{t, x} \left[l_u^L(X) \right], \label{ETRPLGB}
\end{align}
where
\begin{align*}
H(t, x)= (-r G + G_t + \mathbb{L}_X G )(t, x)=
\begin{cases}
-rK, \qquad\qquad\qquad\qquad\qquad L<x<b(t),\\
(-rKZ - \sigma^2 x^2 Z_x)(t, x), \qquad x<L.
\end{cases}
\end{align*}

Next, let $s=T-t$ such that $\mathbb{E}_{t, x} \left[e^{-r(T-t)} V\left( T, X_{T} \right)\right]  = \mathbb{E}_{t, x} \left[ e^{-r(T-t)} G(T, X_T) \right]$,
which, together with \eqref{ETRPLGB} shows that
\begin{align*}
V(t, x)& =  \mathbb{E}_{t, x} \left[  e^{-r(T-t)} G\left( T, X_{T} \right) - \int_0^{T-t} e^{-ru} H\left( t+u, X_{{t+u}} \right) {I} \{ X_{{t+u}} < b(t+u) \} du \right] \nonumber\\
&\qquad-\frac{1}{2} \int_0^{ T-t } e^{-ru} {I}\{ b(t+u)>L \} \bigl( G_x\left( t+u, L+ \right) - G_x\left( t+u, L- \right) \bigr) d \mathbb{E}_{t, x} \left[l_u^L(X) \right]\nonumber\\
&\qquad- \frac{1}{2} \int_0^{ T-t } e^{-ru} {I}\{ b(t+u)=L \} \bigl(G_x\left( t+u, L+ \right) - G_x\left( t+u, L- \right) \bigr) d \mathbb{E}_{t, x} \left[l_u^L(X) \right]\nonumber
\end{align*}
where $G(T, x)= (K-x) I\{L<x< K\}$; after which, let $x=b(t)$ and since $V(t, b(t))=G(t, b(t))$, \eqref{RRFBGB} follows. 

Concerning the uniqueness of the solution of equations \eqref{RRFBLB} and \eqref{RRFBGB}, we omit the detailed proof as it follows standard arguments in the optimal stopping literature \cite{PeskirandShiryaev}. The key idea is to show that if there exists a continuous increasing function $c:[0, T]\mapsto[L, K]$ that solves \eqref{RRFBLB} for $B(0)>L$, (and $c:[0, T]\mapsto(0, K]$ for $B(0)\leq L$), then such $c$ must coincide with the optimal stopping boundary $b$. The proof follows the exact same pattern as the proof of \cite[the presence of the local time term]{DetempleandKitapbayev2018} and \cite[Page 386]{PeskirandShiryaev} by: (i) Simply replacing $V(t, x)$ and $b$ with function $U(t, x)$ and $c$ in equations \eqref{RRFVLB} and \eqref{RRFVGB}, we define function $V^c:[0, T)\times(0, \infty)\mapsto(0, \infty)$ as follows:
\begin{align*}
V^c(t, x) &= 
\begin{cases}
U(t, x), &x>c(t),\\
G(t, x), &x\leq c(t).
\end{cases}
\end{align*}
and reconstruct the continuation and stopping sets by $c$ in the same manner as Proposition \ref{CORRLGB}.

(ii) Following the exact same pattern of the proof of Lemma \ref{TOSPRC5}, we can verify that the change-of-variable formula is applicable to $e^{-rt}V^c(t, X_t)$. We therefore apply the change-of-variable formula to $e^{-ru} V^c(t+u, X_{t+u})$ and take the expectation on both sides of the formula under measure $P_{t, x}$. Note that the local time term associated with $L$ and $c(t)$ have coefficients $G_x(t+u, L+)-G_x(t+u, L-)$ and $V_x^c(t+u, c(t+u)+) - V_x^c(t+u, c(t+u)-)$.

(iii) After step (ii), we encounter the differentiability of function $V^c$ with respect to $x=c(t)$. In order to show that the map $x\mapsto V^c(t, x)$ is $C^1$ at $c(t)$ for each $0\leq t< T$, it amounts to showing that $U(t, x)=G(t, x)$ in the set $(0, c(t)]$. This step can be carried on by simply noticing that the coefficient of the local time term of $L$ is given explicitly and by the strong Markov property and smoothing lemma, the following martingale property emerges:
\begin{align}
U(t, x) = \mathbb{E}_{t, x}&\left[  e^{-rs}U\left(t+s, X_{t+s}\right) - \int_0^s e^{-ru} H\left( t+u, X_{t+u} \right) I\{X_{t+u}<c(t+u)\} du \right. \nonumber\\
&\left. \qquad \qquad\qquad + \frac{K-L}{2}\int_0^s e^{-ru} I\{c(t+u)\geq L\} Z_x(t+u, L-)dl_u^L(X)\right], \label{UANDVC}
\end{align}
which, together with letting $s=\sigma_c=\inf\{ s\in[0, T-t]: X_{t+s}\geq c(t+s) \}$, leads us to the desired assertion. Another direct consequence of this step is that $V^c(t, x)\leq V(t, x)$ by setting $\tau_c=\inf\{ s\in[0, T-t]: X_{t+s}\leq c(t+s) \}$ and noting that if $c(t)>L$, then $X$ will never hit $L$ before it stops; on the other hand, if $c(t)=L$, it will spend zero time on level $L$.

(v) The relation $c\geq b$ on $[0, T]$ is proved to be true by fixing $(t, x)\in (0, T)\times(0, b(t)\wedge c(t))$ and letting $s=\sigma_b=\inf\{s\in[0, T-t]: X_{t+s}\geq b(t+s)\}$ in equations \eqref{ETRPLGB} and \eqref{UANDVC} upon noticing that $V^c=U$ in the whole state space.

(vi) Towards this end, we wish to show $c=b$ on [0, T]. This is achieved by supposing $c\neq b$ such that $c>b$ by the conclusion drawn in step (v). Then, by choosing $x\in(b(t), c(t))$ and considering $s=\tau_b=\inf\{s\in[0, T-t]: X_{t+s}\leq b(t+s)\}$ in equations \eqref{ETRPLGB} and \eqref{UANDVC} upon noticing that $V^c=U$ in the whole state space, a contradiction is reached by noticing that $H<0$ and so are the local time term. For a complete proof, we refer to \cite[Pages 104 and 111]{ZhuoshuWu2023}.
\end{proof}

\end{appendices}

\end{document}